\definecolor{l-gray}{gray}{0.7}
\theoremstyle{plain}
\newtheorem{theorem}{Theorem}[subsection]
\newtheorem{proposition}[theorem]{Proposition}
\newtheorem{corollary}[theorem]{Corollary}
\newtheorem{lemma}[theorem]{Lemma}
\newtheorem*{thm}{Theorem}
\theoremstyle{definition}
\newtheorem{definition}[theorem]{Definition}
\theoremstyle{remark}
\newtheorem{remark}[theorem]{Remark}
\newtheorem{remarks}[theorem]{Remarks}
\newtheorem{example}[theorem]{Example}
\numberwithin{equation}{section}
\numberwithin{figure}{section}
\newenvironment{tfae}
{
\begin{enumerate}}
{\end{enumerate}}
\newcommand{\comp}{}
\newcommand{\defeq}{\coloneq}
\newcommand{\noproof}{\hfill \qed}
\newcommand{\couniv}[2]{\langle #1, #2\rangle}
\newcommand{\univ}[2]{(#1, #2)}
\newcommand{\qeq}{\phantom{\quad}=\phantom{\quad}}
\newcommand{\equal}{\overset{}{\qeq}}
\newcommand{\defn}{\textbf}
\newcommand{\To}{\Rightarrow}
\newcommand{\del}{\partial}
\newcommand{\cupdot}{\mathbin{\mathaccent\cdot\cup}} %command for the disjoint union in Set
\DeclareMathOperator{\tw}{tw}
\DeclareMathOperator{\Ker}{Ker}
\newcommand{\h}{\mathbb{h}}
\newcommand{\C}{\ensuremath{\mathcal{C}}}
\newcommand{\E}{\ensuremath{\mathcal{E}}}
\newcommand{\V}{\mathcal{V}}
\newcommand{\Cat}{\ensuremath{\mathsf{Cat}}}
\newcommand{\Gp}{\ensuremath{\mathsf{Gp}}} %GRP
\newcommand{\Ab}{\ensuremath{\mathsf{Ab}}} %Abelian groups
\newcommand{\XMod}{\ensuremath{\mathsf{XMod}}}
\newcommand{\Ens}{\mathsf{Set}}
\newcommand{\Set}{\mathsf{Set}}
\newcommand{\LieK}{\mathsf{Lie_{\mathbb{K}}}}
\newcommand{\Ch}{\ensuremath{\mathsf{Ch}}}
\newcommand{\s}{\ensuremath{\mathsf{s}}}
\renewcommand{\P}{{\rm (P)}}
\newcommand{\A}{\mathbb{A}} 
\newcommand{\B}{\ensuremath{\mathbb{B}}}
\newcommand{\G}{\mathbb{G}}
\newcommand{\K}{\mathbb{K}}
\newcommand{\NN}{\mathbb{N}}
\renewcommand{\S}{\mathbb{S}}
\newcommand{\ZZ}{\mathbb{Z}}
\newcommand{\Left}{\mathrm{L}}
\newcommand{\ncat}{$n$-cat}
\newcommand{\CatGrp}[1]{\text{$#1$-$\mathsf{Cat}$-$\mathsf{Gp}$}}
\newcommand{\nCatGrp}{\CatGrp{n}}
\newcommand{\OrdGrp}{\ensuremath{\mathsf{OrdGp}}}
\DeclareMathOperator{\Coker}{Coker}
\DeclareMathOperator{\Homology}{H}
\renewcommand{\H}{\Homology}
\DeclareMathOperator{\Z}{Z}
\DeclareMathOperator{\N}{N}
\DeclareMathOperator{\Hom}{Hom}
\DeclareMathOperator{\Der}{Der}
\DeclareMathOperator{\Ima}{Im}
\DeclareMathOperator{\op}{op}
\def\pullback{% with thanks to Valerian Even
\ar@{-}[]+R+<6pt,-1pt>;[]+RD+<6pt,-6pt>%
\ar@{-}[]+D+<1pt,-6pt>;[]+RD+<6pt,-6pt>}
\def\reversepullback{ %command of Maxime Culot to have the pullback symbol bottom left
\ar@{-}[]+R+<6pt,-1pt>;[]+RU+<6pt,6pt>%
\ar@{-}[]+U+<-1pt,6pt>;[]+RU+<6pt,6pt>}
\def\dottedpullback{%
\ar@{.}[]+R+<6pt,-1pt>;[]+RD+<6pt,-6pt>%
\ar@{.}[]+D+<1pt,-6pt>;[]+RD+<6pt,-6pt>}
\def\halfsplitpullback{%
\ar@{-}[]+R+<6pt,-1pt>;[]+RD+<6pt,-6pt>%
\ar@{-}[]+D+<.52ex,-6pt>;[]+RD+<6pt,-6pt>}
\def\pushout{%
\ar@{-}[]+L+<-6pt,1pt>;[]+LU+<-6pt,6pt>%
\ar@{-}[]+U+<-1pt,6pt>;[]+LU+<-6pt,6pt>}
\tikzset{
    triarrow/.style={ % name for the arrow "triarrow"
        ->,
        postaction={decorate,-},
        decoration={
            markings,
            mark=at position .1 with {
                \arrow{Triangle[fill=white, length=6pt,width=6pt]}
            }
        }
    }
}
\tikzset{
    bigtriarrow/.style={ % name for the arrow "bigtriarrow"
        ->,
        postaction={decorate,-},
        decoration={
            markings,
            mark=at position .05 with {
                \arrow{Triangle[fill=white, length=6pt,width=6pt]}
            }
        }
    }
}
\DeclareMathOperator{\kker}{ker}
\DeclareMathOperator{\coker}{coker}
\subjclass[2020]{18E13, 18G05, 18G10}
\keywords{Derived functor; normal subtractive category; homological category; semi-abelian category; chain homotopy; simplicial homotopy; approximate subtraction; protosplit subobject; projective object}
\begin{document}

\title[Non-additive derived functors via chain resolutions]{Non-additive derived functors\\ via chain resolutions}

\author{Maxime Culot}
\author{Fara Renaud}
\author{Tim Van der Linden}

\email{maxime.culot@uclouvain.be}
\email{fara.j.renaud@gmail.com}
\email{tim.vanderlinden@uclouvain.be}

\address[Maxime Culot, Fara Renaud, Tim Van der Linden]{Institut de Recherche en Math\'ematique et Physique, Universit\'e catholique de Louvain, che\-min du cyclotron~2 bte~L7.01.02, B--1348 Louvain-la-Neuve, Belgium}

\address[Tim Van der Linden]{Mathematics \& Data Science, Vrije Universiteit Brussel, Pleinlaan 2, B--1050 Brussel, Belgium}

\thanks{As a Ph.D.\ student, the second author was funded by \emph{Formation à la Recherche dans l'Industrie et dans l'Agriculture (FRIA)}, part of the Fonds de la Recherche Scientifique--FNRS. The third author is a Senior Research Associate of the Fonds de la Recherche Scientifique--FNRS. Competing interests: The authors declare none}

\begin{abstract}
	Let $F\colon \C \to \E$ be a functor from a category $\C$ to a homological (Borceux--Bourn~\cite{Borceux-Bourn}) or semi-abelian (Janelidze--Márki--Tholen~\cite{Janelidze-Marki-Tholen}) category~$\E$. We investigate conditions under which the homology of an object $X$ in $\C$ with coefficients in the functor $F$, defined via projective resolutions in $\C$, remains independent of the chosen resolution. Consequently, the left derived functors of $F$ can be constructed analogously to the classical abelian case.

	Our approach extends the concept of \emph{chain homotopy} to a non-additive setting using the technique of \emph{imaginary morphisms}. Specifically, we utilize the \emph{approximate subtractions} of Bourn--Janelidze~\cite{DB-ZJ-2009}, originally introduced in the context of subtractive categories~\cite{ZJanelidze-Subtractive, Ursini3}. This method is applicable when $\C$ is a pointed regular category with finite coproducts and enough projectives, provided the class of projectives is \emph{closed under protosplit subobjects}, a new condition introduced in this article and naturally satisfied in the abelian context. We further assume that the functor $F$ meets certain exactness conditions: for instance, it may be protoadditive and preserve proper morphisms and binary coproducts---conditions that amount to additivity when $\C$ and $\E$ are abelian categories.

	Within this framework, we develop a basic theory of derived functors, compare it with the simplicial approach, and provide several examples.
\end{abstract}

\maketitle

\tableofcontents

\section{Introduction}

\subsection{Derived functors in abelian categories}\label{sec - derived functor ab case}
In classical homological algebra, such as for modules over a ring or within the broader context of abelian categories, we can define \emph{left derived functors} for any additive functor $F\colon \C \to \E$, where $\C$ and~$\E$ are abelian categories and $\C$ has enough projectives. This condition allows us to construct a projective resolution $C(X)$ for any $X \in \C$, as illustrated in Figure~\ref{Figure Projective Resolution Intro}.
\begin{figure}
	\resizebox{\textwidth}{!}{
	\xymatrix{
	\cdots\ar[r] & C_{n+1}\ar[rr]^-{d_{n+1}}\ar@{-{ >>}}[dr]_-{\bar{d}_{n+1}} & & C_n \ar[r]^-{d_{n}} & C_{n-1}\ar[r] & \cdots \ar[r] & C_1 \ar[r]^-{d_1} & C_0\ar@{-{ >>}}[rd]_-{\coker(d_1)} \ar[rr] && 0 \\
	&  & \Ker(d_n) \ar@{{ |>}->}[ur]_-{\ker(d_n)} & & & & & & X
	}}
	\caption{A projective resolution $C(X)$ of an object $X$ in a normal category: the horizontal sequence is exact in all $C_n$ for $n \geq 1$ and $\H_0(C) = X$}\label{Figure Projective Resolution Intro}
\end{figure}
For $n \in \ZZ$, we define
\[
	\Left_n(F)(X) \coloneq \H_n(F(C(X))).
\]
The functor $\Left_n(F)\colon \C \to \E$ is called the \defn{$n$th left derived functor of $F$}. Its value at $X$ is sometimes referred to as the \defn{$n$th homology of $X$ with coefficients in~$F$}.

In addition to encompassing well-known examples such as the Tor and Ext functors, this concept is powerful due to the relationships between the derived functors of different dimensions of a given functor, as expressed via a long exact homology sequence (see, for instance,~\cite{Cartan-Eilenberg,MacLane:Homology,Weibel}). The construction of this long exact sequence in homology---see, e.g., the proof in~\cite{Cartan-Eilenberg}---is partially based on the \emph{Simultaneous Resolution Theorem}~\cite[Proposition~5.2.2]{Cartan-Eilenberg}, also known as the \emph{Horseshoe Lemma}~\cite[Lemma 2.2.8]{Weibel}.

\begin{figure}[h]
	{\[
			\xymatrix@C=3.5em{
			\cdots \ar[r] & C_2 \ar@{.>}[ld] \ar@<-.5ex>[d]_-{f_2} \ar@<.5ex>[d]^-{g_2} \ar[r]^-{d^C_2} & C_1 \ar@{.>}[ld]|-{h_1} \ar@<-.5ex>[d]_-{f_1} \ar@<.5ex>[d]^-{g_1} \ar[r]^-{d^C_1} & C_0 \ar@{.>}[ld]|-{h_0} \ar@<-.5ex>[d]_-{f_0} \ar@<.5ex>[d]^-{g_0} \ar[r] & 0 \ar@{.>}[dl]^-(.4){h_{-1}=0} \\
			\cdots \ar[r] & E_2 \ar[r]_-{d^E_2} & E_1 \ar[r]_-{d^E_1} & E_0 \ar[r] & 0
			}
		\]}
	\caption{The lowest degrees of a chain homotopy}
	\label{Fig Chain Intro}
\end{figure}
The abelianness of both categories $\C$ and $\E$, along with the additivity of the functor $F$, are crucial in proving that the derived functors of $F$ are well defined. This proof relies on the concept of a \emph{chain homotopy}. Recall that for a pair of parallel morphisms \(f\), \(g\colon (C,d^C) \to (E,d^E)\) of positively graded chain complexes in an abelian category, a \defn{chain homotopy} from $f$ to $g$ (illustrated in Figure~\ref{Fig Chain Intro}) is a morphism of graded objects \(h\colon (C,d^C) \to (E,d^E)\) of degree $1$ such that
\begin{equation}
	f - g = d^{E} \comp h + h \comp d^{C}
	\label{Abelian-Homotopy}
\end{equation}
holds. Let us outline some key properties of this concept~\cite{Cartan-Eilenberg, MacLane:Homology}.

\begin{theorem}\label{Facts}
	In the abelian setting:
	\begin{enumerate}\label{enum - why derived functors well defined}
		\item Homotopic chain morphisms have the same homology, so chain homotopically equivalent complexes have isomorphic homologies.
		\item Any two projective resolutions of an object are homotopically equivalent.
		\item Additive functors preserve chain homotopies.\noproof
	\end{enumerate}
\end{theorem}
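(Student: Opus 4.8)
The plan is to establish the three items in turn by standard cycle-chasing and lifting arguments. For (1), I would fix a chain homotopy $h$ with $f - g = d^{E}\comp h + h\comp d^{C}$ and verify that $\H_n(f) = \H_n(g)$ for each $n$. Passing to generalised elements (equivalently, applying the Yoneda embedding, which is exact on an abelian category, or invoking the usual diagram lemmas for homology directly), take a cycle $z$ in $C_n$, so that $d^{C}(z) = 0$; then $(f - g)(z) = d^{E}(h(z)) + h(d^{C}(z)) = d^{E}(h(z))$ is a boundary in $E_n$, so $f$ and $g$ induce the same map on homology classes, giving $\H_n(f) = \H_n(g)$. The claim about chain homotopy equivalences is then formal: given $f\colon C \to E$ and $k\colon E \to C$ with $kf \simeq \mathrm{id}_{C}$ and $fk \simeq \mathrm{id}_{E}$, functoriality of $\H_n$ together with the first part yields $\H_n(k)\H_n(f) = \H_n(\mathrm{id}_{C}) = \mathrm{id}$ and symmetrically, so $\H_n(f)$ is an isomorphism.

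For (2), the engine is the comparison theorem for projective resolutions. Given resolutions $C(X) \to X$ and $E(X) \to X$, I would build a chain map $f\colon C(X) \to E(X)$ over $\mathrm{id}_X$ by induction on degree, at each stage using projectivity of $C_n$ together with exactness of $E(X)$ in the relevant degree to obtain the required factorisation; the same scheme, shifted up one degree, shows that any two lifts of a fixed endomorphism of $X$ are chain homotopic. Choosing also a lift $k\colon E(X) \to C(X)$ of $\mathrm{id}_X$ and applying this uniqueness clause to the pairs $(kf,\, \mathrm{id}_{C(X)})$ and $(fk,\, \mathrm{id}_{E(X)})$ exhibits $C(X)$ and $E(X)$ as chain homotopy equivalent.

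For (3), let $F$ be additive and $h$ a chain homotopy from $f$ to $g$. Since additive functors preserve zero morphisms, $F(d^{E})\comp F(d^{E}) = F(d^{E}\comp d^{E}) = F(0) = 0$, so $F$ sends chain complexes to chain complexes and chain maps to chain maps, and $F(h)$ remains a family of morphisms of degree $1$. Using that $F$ preserves finite sums of parallel morphisms, hence differences, and is functorial for composites, apply $F$ to \eqref{Abelian-Homotopy} to obtain $F(f) - F(g) = F(d^{E})\comp F(h) + F(h)\comp F(d^{C})$, so $F(h)$ is a chain homotopy from $F(f)$ to $F(g)$. All of this is routine in the additive world; the only mildly technical point is the double induction behind the comparison theorem in (2), while the real conceptual difficulty — making sense of the difference $f - g$, of the relation \eqref{Abelian-Homotopy}, and of its image under $F$ once additivity is dropped — is precisely what the remainder of the paper is devoted to.
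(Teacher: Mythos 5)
Your proposal is correct and coincides with the classical arguments the paper itself does not reproduce but simply cites (Cartan--Eilenberg, Mac\,Lane): boundaries absorb the homotopy terms for (1), the comparison theorem for (2), and additivity applied to Equation~\eqref{Abelian-Homotopy} for (3). One small caveat: the Yoneda embedding is only left exact, so your parenthetical justification of element chasing should instead invoke Mac\,Lane's calculus of members or a full embedding theorem---but since you also offer the direct diagram-lemma route, this does not affect the argument.
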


When $\C$ has enough projectives, these properties ensure that the left derived functors of an additive functor $F\colon \C \to \E$ are well defined.

\subsection{Towards a non-additive setting}\label{sec - explanation non additive context}
The context of \emph{semi-abelian categories}, as defined by Janelidze--Márki--Tholen~\cite{Janelidze-Marki-Tholen}, and the more general \emph{homological categories}\footnote{These are not to be confused with the \emph{homological categories} of Grandis~\cite{Grandis-HA2}, also called \emph{ex3-categories}; for instance, the semi-abelian category $\Gp$ of all groups is not homological in Grandis's sense.}, as defined by Borceux--Bourn~\cite{Borceux-Bourn}, provide a highly suitable non-additive framework for homological algebra. Besides the validity of classical homological diagram lemmas~\cite{Borceux-Bourn}, these categories support a well-established homology theory based on simplicially derived functors à la Tierney--Vogel~\cite{Tierney-Vogel2} and Barr--Beck~\cite{Barr-Beck}. This has led to extensive literature on general versions of the Hopf formulae~\cite{EGVdL,Janelidze:Hopf} and interpretations of cohomology groups~\cite{RVdL2,PVdL1}, linked with commutator theory~\cite{RVdL,RVdL3}, which becomes non-trivial in this non-abelian setting.

Chain complexes and their homology behave as expected~\cite{EverVdL2}, but chain resolutions have been largely avoided in the literature. This is despite the fact that they are indeed available in this context, provided the category has enough (regular) projectives. The primary reason for using simplicial techniques in this non-additive context is that the traditional procedure for deriving functors does not work. Our goal here is to explore the conditions under which chain resolutions can be effectively used.

The key problem is that projective resolutions for a given object are not unique, and the definition of a chain homotopy involves the additive structure on hom-sets, making it unsuitable for proving uniqueness up to homotopy in a non-additive context. The primary challenge we face in extending the above definition to a non-additive setting is how to express Equation~\eqref{Abelian-Homotopy}. For instance, the semi-abelian category $\Gp$ of (non-abelian) groups is non-additive precisely because the pointwise sum (or product, depending on the notation used) of two group homomorphisms, although defined as a function, is no longer a homomorphism. Consequently, Equation~\eqref{Abelian-Homotopy} does not make sense as such in the category $\Gp$.

We may, however, view this condition differently, in a way that eventually allows us to use the so-called \emph{subtractive} structure with which each homological category with binary coproducts is naturally equipped. Still in the abelian setting, we can rewrite Equation~\eqref{Abelian-Homotopy} using only a single operation, namely the difference\footnote{Rewriting the equation in terms of just sums seems to lead nowhere---unless, following Grandis~\cite{Grandis-HA2}, we assume that hom-sets carry a commutative monoid structure, which is in our semi-abelian context would amount to abelianness of the category under consideration.}:
\begin{equation}\label{homotopy via differences}
	d^{E}\comp h = (f-g) - h\comp d^{C}.
\end{equation}
This still doesn't make sense as such in a homological category with binary coproducts, since, like the sum of group homomorphisms, the difference of group homomorphisms need not be a homomorphism. However, this viewpoint opens the way to an interpretation inspired by the theory of so-called \emph{approximate subtractions}~\cite{DB-ZJ-2009b, DB-ZJ-2009}. Originally developed in the context of \emph{subtractive categories}~\cite{ZJanelidze-Subtractive, ZJanelidze-Snake}, this theory allows the use of (in some suitable sense) \emph{imaginary} or \emph{approximate} differences even when the hom-sets lack an abelian group structure.

Observe that the difference of two morphisms \( f \), \( g \colon X \to Y \) in an abelian category can be rewritten as
\begin{equation*}
	\vcenter{\xymatrix@!0@R=4em@C=6em{0 \ar[r] & X \ar@{{ |>}->}[r]^-{\univ{1_X}{-1_X}} \ar[rd]_-{f-g} & X \oplus X \ar@{-{ >>}}@<.5ex>[r]^-{\nabla_X} \ar[d]^-{\couniv{f}{g}} & X \ar@{{ >}->}@<.5ex>[l]^-{\iota_1} \ar[r] & 0\\
	&& Y}}
\end{equation*}
where the horizontal sequence is exact and where \( f - g = \couniv{f}{g} \univ{1_X}{-1_X} \) (see for instance \cite{Freyd} for further explanation). We can mimic this procedure in the context of a pointed category: given any object \( X \) for which the needed (co)limits exist, we let \( (D(X), \delta_X) \) be the kernel of \( \nabla_X = \couniv{1_X}{1_X} \colon X + X \to X \) and call it the \defn{difference object} of \( X \).
\begin{equation*}
	\vcenter{\xymatrix@!0@R=4em@C=6em{D(X) \ar@{{ |>}->}[r]^-{\delta_X} \ar[rd]_-{f-g} & X + X \ar@<.5ex>[r]^-{\nabla_X} \ar[d]^-{\couniv{f}{g}} & X \ar@{{ >}->}@<.5ex>[l]^-{\iota_1} \\
	& Y}}
\end{equation*}
If now \( f \), \( g \colon X \to Y \), we view the composite \( \couniv{f}{g} \circ \delta_X \) as a kind of \emph{approximate difference} of \( f \) and \( g \), and write it as \( f - g \colon D(X) \to Y \). Note that even though this definition requires very little from the category in which it is made, its usefulness depends strongly on the category's properties. For our purposes, it will be convenient to work in a homological category with coproducts, which encompasses the subtractive setting of~\cite{DB-ZJ-2009}--- see Section~\ref{sec - approximate difference}.

In Section~\ref{sec - approx homotopy}, we will explain how Equation~\eqref{homotopy via differences}, interpreted in terms of approximate differences, naturally leads to the concept of \emph{approximate chain homotopy}. For these chain homotopies, a general version of item (1) in Theorem~\ref{Facts} holds. Specifically, in a homological category $\C$ with binary coproducts, two chain morphisms that are homotopic will always have the same homology. Furthermore, a simplicial homotopy between two simplicial morphisms will be shown to induce a chain homotopy, in our sense, between the corresponding chain morphisms of Moore chain complexes (Proposition~\ref{thm - simplicial homotopy implies chain homotopy - nonadd case}).

Unfortunately, this does not suffice for item (2) of Theorem~\ref{Facts} to be valid in general. For this, we need projective objects to interact well with the split short exact sequences in the category $\C$. Note that in an additive category with finite limits, whenever we have a split short exact sequence
\begin{equation*}\label{Split Extension}
	\xymatrix{0 \ar[r] & K \ar@{{ |>}->}[r]^-k & X \ar@{-{ >>}}@<.5ex>[r]^-{f} & Y \ar[r] \ar@{{ >}->}@<.5ex>[l]^-{s} & 0,}
\end{equation*}
if the middle object \(X\) is projective, then so are \(K\) and \(Y\). Indeed, both are retracts of \(X\), because \(X\) is the biproduct \(K \oplus Y\). In a homological category, \(Y\) will remain a retract of \(X\), so it is still projective if \(X\) is projective. However, for \(K\) to be projective is a non-trivial condition, which may or may not hold depending on the situation. We say that the class of projectives in \(\C\) is \defn{closed under protosplit subobjects} or that \(\C\) \defn{satisfies Condition (P)}. As it turns out, a homological category with enough regular projectives satisfies Condition \P\ if and only if a variation on the Horseshoe Lemma holds (see Section~\ref{sec - non add Horseshoe lemma}). Note that a version of the Horseshoe Lemma is needed anyway if we wish to obtain a long exact homology sequence relating the derived functors of the objects in a given short exact sequence. Such a long exact sequence is indeed available—see Section~\ref{sec - syzygy}.

Item (3) in Theorem~\ref{Facts}---preservation of chain homotopies---requires \(F\) to have certain properties that, in the abelian case, correspond to \(F\) being an additive functor. Specifically, \(F\) must preserve binary coproducts, \defn{proper morphisms} (those morphisms whose image is a \defn{normal subobject}, i.e., a kernel of some morphism), and split short exact sequences. This is sufficient for compatibility with the simplicial approach (Theorem~\ref{Thm Compa}). When \(\E\) is an abelian category, preservation of proper morphisms is automatic; and when \(\C\) is also abelian, these conditions are equivalent to \(F\) being additive.

The condition that \(F\) is a right exact functor, common in the abelian setting, can be captured in our more general framework by requiring it to be \defn{sequentially right exact}---sending short exact sequences to right exact sequences---while also preserving binary coproducts and protosplit monomorphisms (i.e., the kernel of a retraction is sent to a monomorphism). Under these conditions, we obtain results such as the long exact homology sequence (Theorem~\ref{them sec - long exact sequence of derived functors intro}), the Syzygy Lemma (Theorem~\ref{Syzygy Lemma}).

\section{Self-contained statement of the main results}\label{sec - SA vs AB}
This article extends the notion of a derived functor, typically defined for an additive functor between abelian categories, to a non-additive context. Our objective is to derive functors between (Janelidze--Márki--Tholen~\cite{Janelidze-Marki-Tholen}) semi-abelian categories in the same manner as in the abelian setting: by applying the functor to a projective resolution and then taking homology.

However, the absence of an additive structure on the categories complicates this process, since the conventional arguments ensuring that these functors are well defined do not hold in this setting. Consequently, simplicial methods are traditionally employed instead of chain resolutions~\cite{EverVdL2, EGVdL, RVdL2}. This article shows that, under certain conditions, chain resolutions do work after all.

Our aim in the current section is to present the main theorems of the article in a clear and essentially self-contained manner. These theorems can be understood without prior knowledge of approximate homotopies or expertise in the semi-abelian context. Any less familiar terminology used here is explained in Subsection~\ref{Subsec Context} below.

\subsection{Three main results}
The first result extends the definition of the left derived functors of an additive functor between abelian categories to a non-additive setting:

\begin{thm}
	Let $\C$ be a pointed regular category with binary coproducts and with enough projectives which satisfies Condition \P. Let $\E$ be a homological category with binary coproducts. Let $F\colon \C\to \E$ be a protoadditive functor that preserves proper morphisms and binary coproducts. Then the formula
	\[
		\Left_n(F)(X)\coloneq \H_n(F(C(X)))
	\]
	where $X$ is an object of $\C$ and $C(X)$ is a chosen projective chain resolution of $X$, defines a functor $\Left_n(F)\colon\C\to \E$ for each $n\in \ZZ$.
\end{thm}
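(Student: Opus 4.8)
The plan is to transpose the classical argument recalled in Section~\ref{sec - derived functor ab case} to the present setting, replacing at each stage the relevant item of Theorem~\ref{Facts} by its approximate counterpart. Concretely, three things have to be done: (a) show that a morphism of $\C$ induces, via the chosen resolutions, a well-defined morphism on the objects $\H_n(F(C(X)))$; (b) check the two functor axioms; and (c), as a consistency check, observe that a different choice of resolutions changes $\Left_n(F)$ only up to a canonical natural isomorphism. Exactly as in the abelian case, all of this reduces to two ingredients: a \emph{comparison theorem} for projective chain resolutions in~$\C$, and the fact that $F$ carries an approximate chain homotopy of proper complexes to one in~$\E$; these are then combined with the homology-invariance of approximate chain homotopies established in Section~\ref{sec - approx homotopy} (the analogue of Theorem~\ref{Facts}(1)).

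First I would prove the comparison theorem. Given a morphism $f\colon X\to X'$ and projective chain resolutions $C(X)$, $C(X')$, the existence of a chain map $\tilde f\colon C(X)\to C(X')$ over $f$ is the usual degreewise lifting: projectivity of the $C_n(X)$ together with exactness of $C(X')$ lets one lift through the regular epimorphisms $C_n(X')\twoheadrightarrow\Ker(d^{X'}_{n-1})$, and this uses only regularity and enough projectives. The point where Condition~\P\ is indispensable is the \emph{uniqueness up to homotopy}: given two lifts $\tilde f$, $\tilde g$ of $f$, one builds an approximate chain homotopy between them degree by degree, the component in degree $n$ being a morphism $D(C_n(X))\to C_{n+1}(X')$ obtained by lifting a suitable morphism out of $D(C_n(X))$ along a regular epimorphism onto a kernel inside $C_{n+1}(X')$ — the interpretation of Equation~\eqref{homotopy via differences} that will be spelled out in Section~\ref{sec - approx homotopy}. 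This lifting is possible because $D(C_n(X))$ is projective: it is the kernel of the split epimorphism $\nabla_{C_n(X)}\colon C_n(X)+C_n(X)\to C_n(X)$, hence a protosplit subobject of $C_n(X)+C_n(X)$, and the latter is projective since a morphism out of a finite coproduct of projectives lifts along a regular epimorphism componentwise; now Condition~\P\ applies.

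Next I would verify that $F$ preserves approximate chain homotopies of proper complexes. Since $F$ preserves binary coproducts and is protoadditive, it preserves the split short exact sequence $0\to D(Z)\to Z+Z\to Z\to 0$ defining the difference object, so the canonical comparison $F(D(Z))\to D(F(Z))$ is an isomorphism and $F$ is compatible with the formation of approximate differences $\couniv{f}{g}\circ\delta_Z$; applying $F$ degreewise to an approximate chain homotopy therefore yields one again. Preservation of proper morphisms ensures, on the one hand, that $F(C(X))$ is still a proper chain complex, so that $\H_n(F(C(X)))$ is the homology object one expects, and on the other hand it is what makes the homology-invariance result of Section~\ref{sec - approx homotopy} applicable to $F(C(X))$. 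With this in place one sets $\Left_n(F)(f)\coloneq\H_n(F(\tilde f))$ for any choice of lift $\tilde f$: by the comparison theorem two lifts of $f$ are approximately chain homotopic, $F$ sends this to an approximate chain homotopy, and so they agree in homology — hence $\Left_n(F)(f)$ is well defined. Functoriality is then formal: $1_{C(X)}$ is a lift of $1_X$, and if $\tilde f$, $\tilde g$ lift $f$, $g$ then $\tilde g\circ\tilde f$ lifts $g\circ f$, whence $\Left_n(F)(1_X)=1$ and $\Left_n(F)(g\circ f)=\Left_n(F)(g)\circ\Left_n(F)(f)$. Finally, applying the comparison theorem to $1_X$ for two different choices of resolution produces chain maps that are mutually inverse up to approximate chain homotopy; $F$ turns them into such chain maps between proper complexes, so the objects $\H_n(F(-))$ of the two resolutions are canonically isomorphic, naturally in $X$.

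I expect the genuinely delicate step to be the uniqueness part of the comparison theorem: fixing the correct formulation of the approximate chain homotopy relation — in particular the bookkeeping of the difference objects $D(C_n(X))$, of the functor $D$ applied to the differentials, and of how the imaginary morphisms compose — and then checking that, so formulated, the inductive construction needs nothing beyond the projectivity of $D(C_n(X))$ furnished by Condition~\P. By comparison, the verification that $F$ commutes with the approximate-difference construction is routine once protoadditivity and preservation of binary coproducts are in hand, and the assembly of the functor together with its comparison across resolutions is purely formal.
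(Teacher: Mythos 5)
Your proposal follows essentially the same route as the paper: existence of liftings as in Lemma~\ref{Lemma Existence Lifting}, uniqueness of liftings up to approximate chain homotopy (Theorem~\ref{thm - projective resolution unique up to approx homoto}), where Condition~\P\ together with closedness of projectives under finite coproducts gives projectivity of the difference objects, preservation of approximate homotopies by $F$ because protoadditivity plus preservation of binary coproducts makes $F$ subtractive and hence compatible with approximate differences, homology invariance of approximate homotopies in the normal subtractive (here homological) category $\E$ together with preservation of proper morphisms so that $F(C(X))$ stays proper, and the formal assembly of Theorem~\ref{Thm Def L}. The only imprecision is that the homotopy component in degree $n$ has domain $D^{n+1}(C_n(X))$ rather than $D(C_n(X))$, with the morphisms $\varsigma^{n-1}$ entering the defining equations, but the same projectivity argument iterates, so this does not affect your plan.
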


In other words, the left derived functors of $F$ are defined as in the abelian context. The proof follows steps (1)--(3) of Theorem~\ref{Facts}, adapted for the notion of chain homotopy developed in this article. This is an application of Theorem~\ref{Thm Def L}.

In fact, when moreover $\C$ is protomodular (so that now both $\C$ and $\E$ are homological categories), the condition that $F$ is protoadditive is strong enough for the $\Left_n(F)$ to coincide with the derived functors obtained via projective simplicial resolutions, once those exist. In particular, we find the following result, which in the text occurs as Theorem~\ref{Thm Compa}:

\begin{thm}
	Let $\C$ be a semi-abelian variety (in the sense of universal algebra, as in~\cite{Bourn-Janelidze}) for which Condition \P\ holds, with comonad $\G$ induced by the forgetful adjunction to $\Set$. Let $\E$ be a semi-abelian category and $F\colon \C\to \E$ a protoadditive functor which preserves proper morphisms and binary coproducts. Then $\Left_n(F)$ is naturally isomorphic to $\H_{n+1}(-,F)_{\G}$, the $(n+1)$th simplicially derived functor of~$F$ in the sense of Barr--Beck~\cite{Barr-Beck,EverVdL2}.
\end{thm}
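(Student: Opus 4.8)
The plan is to build a natural comparison morphism between the two homology theories and then prove it is an isomorphism by a standard acyclic-models / dimension-shifting argument, exploiting the hypotheses on $F$ at each step. First I would set up the two sides carefully. On one side we have $\Left_n(F)(X) = \H_n(F(C(X)))$ for a chosen projective chain resolution $C(X)$; by the first main theorem this is independent of the choice, and by the results of Section~\ref{sec - approx homotopy} and Section~\ref{sec - non add Horseshoe lemma} it is functorial. On the other side, the simplicially derived functor $\H_{n+1}(-,F)_\G$ is computed by applying $F$ to the canonical $\G$-projective simplicial resolution $\G_\bullet X$ (the bar construction of the forgetful/free adjunction to $\Set$), forming the Moore complex $\N F(\G_\bullet X)$, and taking homology; the index shift by one is the usual convention making $\H_1$ the "zeroth" derived functor. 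The key bridge is that the Moore complex $\N(\G_\bullet X)$ of the simplicial resolution is itself a projective chain resolution of $X$ in $\C$: each $\G_n X$ is a (regular) projective, the Moore complex of a simplicial resolution is exact in positive degrees, and $\H_0 = X$. This is where Condition \P\ is essential — the Moore complex components $\N_n = \bigcap \ker(\partial_i)$ are protosplit subobjects of $\G_n X$ (kernels of retractions built from degeneracies), hence projective.

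Next I would invoke Proposition~\ref{thm - simplicial homotopy implies chain homotopy - nonadd case}: a simplicial homotopy induces an approximate chain homotopy on Moore complexes. Combined with the fact that the simplicial resolution $\G_\bullet X$ is a simplicial-homotopy-theoretic object (any two augmented $\G$-projective resolutions over $X$ are simplicially homotopy equivalent, by the usual comparison theorem for comonadic resolutions), this shows that $\N(\G_\bullet X)$ and the abstractly chosen chain resolution $C(X)$ are connected by chain maps that are approximate-homotopy inverse to each other — once we know both are projective chain resolutions of $X$, the non-additive version of item (2) of Theorem~\ref{Facts} (which holds under Condition \P, via the Horseshoe/comparison machinery of the article) gives a homotopy equivalence between them. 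Then the non-additive item (1) says homotopic chain maps induce equal maps on homology, so $\H_n(C(X)) \cong \H_n(\N(\G_\bullet X))$ as objects of $\C$ — but of course what we actually need is the comparison after applying $F$.

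So the third step is to transport this through $F$. Here the hypotheses "protoadditive, preserves proper morphisms and binary coproducts" are exactly the non-additive analogue of additivity needed for item (3) of Theorem~\ref{Facts}: $F$ preserves approximate chain homotopies (this is proved in the relevant section of the text as the ingredient behind $\Left_n(F)$ being well defined). Therefore $F$ sends the homotopy equivalence $C(X) \simeq \N(\G_\bullet X)$ to a homotopy equivalence $F(C(X)) \simeq F(\N(\G_\bullet X))$ of chain complexes in $\E$, whence $\H_n(F(C(X))) \cong \H_n(F(\N(\G_\bullet X)))$. It remains to identify $\H_n(F(\N(\G_\bullet X)))$ with $\H_{n+1}(X,F)_\G = \H_n(\N(F(\G_\bullet X)))$, i.e. to commute $F$ past the Moore-complex functor $\N$. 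Since $F$ preserves binary coproducts and protosplit monomorphisms and proper morphisms, it preserves the splittings that present $\N_n$ as a direct factor-type kernel inside $\G_n X$, so $\N(F(\G_\bullet X)) \cong F(\N(\G_\bullet X))$ — more precisely one checks that applying $F$ degreewise commutes with taking the intersection of the kernels of the faces, using that these kernels are protosplit. Naturality in $X$ follows from naturality of every construction used (the comparison map, the Moore complex, and $F$), so we get the asserted natural isomorphism.

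The main obstacle, I expect, is precisely this last identification $F \circ \N \cong \N \circ F$ on the comonadic resolution, together with checking it is natural and compatible with the degree shift. The Moore complex involves iterated intersections of kernels, and a general functor does not commute with such intersections; making this work uses in an essential way that in $\G_\bullet X$ these kernels are \emph{protosplit} (split off by degeneracy-built retractions), that $F$ preserves protosplit monomorphisms and the relevant split short exact sequences, and that $\C$ and $\E$ are (semi-)abelian so that one can argue inductively on the number of intersected kernels via the snake/$3\times 3$ lemmas. A secondary subtlety is verifying that the non-additive items (1) and (2) of Theorem~\ref{Facts} genuinely upgrade to a two-sided homotopy equivalence at the level of the chosen $C(X)$ versus $\N(\G_\bullet X)$ — but this is handled by the general theory developed earlier in the paper (the approximate-homotopy comparison theorem under Condition \P), so it can be cited rather than redone here.
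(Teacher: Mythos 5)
Your proposal is correct and, in substance, follows the same strategy as the paper: show that the Moore complex of the comonadic simplicial resolution is a projective chain resolution of $X$ (this is Proposition~\ref{thm - NS=P}), compare it with the chosen $C(X)$ via the chain comparison theorem (Theorem~\ref{thm - projective resolution unique up to approx homoto}) and the fact that $F$ preserves approximate homotopies, and then commute $F$ past the Moore functor $\N$; so the only differences are in bookkeeping, plus one imprecision worth flagging. The paper organizes both the projectivity of the $\N_n$ and the identification $F(\N_n(\S))=\N_n(F(\S))$ through the d\'ecalage: the split short exact sequence $0\to\Lambda(\S)\to\S^-\to\S\to 0$ of simplicial objects together with $\N_n(\S)=(\Lambda^n\S)_0$, so that at every stage one only ever forms the kernel of a \emph{single} split epimorphism ($\partial_0$, split by $\sigma_0$), to which Condition \P\ and protoadditivity apply directly. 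Your assertion that $\N_n(\A)=\bigcap_{l<n}\Ker(\partial_l)$ is itself a protosplit subobject of $A_n$ is not justified as stated---an intersection of kernels of split epimorphisms need not be the kernel of one split epimorphism---but the induction on the number of intersected kernels that you sketch later does repair this: $\partial_j$ restricted to $\bigcap_{l<j}\Ker(\partial_l)$ is a split epimorphism (with section the restriction of $\sigma_j$) onto the corresponding intersection one simplicial level down, and its kernel is the next intersection, so projectivity and the commutation of $F$ with $\N$ follow by iterating Condition \P\ and protoadditivity, exactly as in the paper's $\Lambda$-argument. Two of your auxiliary devices are unnecessary: the $3\times 3$/snake lemmas play no role here (preservation of kernels of split epimorphisms suffices), and Theorem~\ref{thm - simplicial homotopy implies chain homotopy - nonadd case} is not needed for this particular comparison, since once both complexes are known to be projective chain resolutions the chain-level comparison theorem applies directly; also recall that properness of the relevant complexes in $\E$ (needed for homology to be meaningful) is exactly where preservation of proper morphisms and Barr exactness of $\E$ enter, which your appeal to the general theory covers.
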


The reason for this is two-fold: first of all, any two simplicially homotopic morphisms of simplicial objects induce homotopic chain morphisms between the Moore normalizations---this is Theorem~\ref{thm - simplicial homotopy implies chain homotopy - nonadd case}. Note that this works despite the fact that the Dold-Kan equivalence (between the categories of simplicial objects and positively graded chain complexes) is not available here. Secondly, the normalization of a simplicial resolution is always a projective chain resolution, by closedness of the class of projectives under protosplit subobjects (Condition \P) and the fact that in a homological category, exactness of a simplicial object agrees with exactness of its normalization~\cite{Tierney-Vogel2,GVdL,Goedecke}. We require Barr exactness of the category $\E$ to guarantee that the normalization of any simplicial object is a proper chain complex, which is essential for the homology objects to detect exactness~\cite{VdLinden:Doc}.

A third key result mimics what happens for a right exact functor between abelian categories, Theorem~\ref{them sec - long exact sequence of derived functors intro} in the text:

\begin{thm}
	Let $\C$ be a homological category with binary coproducts and with enough projectives which satisfies Condition \P. Let $\E$ be a homological category with binary coproducts. Let $F\colon \C\to \E$ be a sequentially right-exact functor which preserves protosplit monomorphisms and binary coproducts. Any short exact sequence
	\[
		\xymatrix{0\ar[r] & K\ar@{{ |>}->}[r]^-k & X \ar@{-{ >>}}[r]^-{f} & Y\ar[r] & 0}
	\]
	in $\C$ gives rise to a long exact sequence in $\E$ as in Figure~\ref{fig long exact sequence with F} which depends naturally on the given short exact sequence.
\end{thm}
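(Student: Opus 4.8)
The plan is to reduce the construction of the long exact homology sequence to three ingredients, all of which are available from the results quoted in the introduction: a non-additive Horseshoe Lemma, the well-definedness of the $\Left_n(F)$ on this weaker hypothesis (sequential right exactness rather than protoadditivity), and the classical connecting-morphism machinery for short exact sequences of proper chain complexes in a homological category. First I would fix the given short exact sequence $\xymatrix{0 \ar[r] & K \ar@{{ |>}->}[r]^-k & X \ar@{-{ >>}}[r]^-f & Y \ar[r] & 0}$ in $\C$ and choose projective chain resolutions $C(K) \to K$ and $C(Y) \to Y$. Invoking the variation on the Horseshoe Lemma from Section~\ref{sec - non add Horseshoe lemma} (which holds precisely because $\C$ satisfies Condition~\P), I would produce a projective chain resolution $C(X) \to X$ together with a short exact sequence of chain complexes
\[
	\xymatrix{0 \ar[r] & C(K) \ar[r] & C(X) \ar[r] & C(Y) \ar[r] & 0}
\]
lifting the original one, and — crucially — with each degree $n$ component split short exact, $C_n(X) \cong C_n(K) + C_n(Y)$ via a chosen section. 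The point of the splitting in each degree is twofold: it is what the Horseshoe construction delivers, and it is what lets $F$ be applied degreewise without destroying exactness, since $F$ preserves binary coproducts (hence finite coproducts, hence these split sequences as split sequences) and preserves protosplit monomorphisms, so the kernel inclusion $F(C_n(K)) \to F(C_n(X))$ remains monic.

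Next I would apply $F$ to this resolution-level short exact sequence. Using preservation of binary coproducts and of protosplit monomorphisms, the sequence $\xymatrix{0 \ar[r] & F(C(K)) \ar[r] & F(C(X)) \ar[r] & F(C(Y)) \ar[r] & 0}$ is a short exact sequence of chain complexes in $\E$ in each degree; since $\E$ is homological this is a degreewise proper short exact sequence of chain complexes. Here I would need to check that the three complexes $F(C(K))$, $F(C(X))$, $F(C(Y))$ are proper chain complexes in $\E$, so that their homology objects are honest objects of $\E$ and detect exactness in the sense of~\cite{VdLinden:Doc}; this is exactly where sequential right exactness of $F$ enters, as it is the hypothesis controlling the images of the differentials $d_n = \ker(d_{n-1})\circ \bar d_n$ under $F$ — applying $F$ to the canonical factorization of each $d_n$ through its image and using that $F$ sends the short exact sequences built from the resolution to right exact sequences yields properness of $F(d_n)$. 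Having a degreewise proper short exact sequence of proper chain complexes in the homological category $\E$, I would then invoke the long exact homology sequence for chain complexes in homological categories (the snake-lemma / connecting-morphism construction of Borceux--Bourn~\cite{Borceux-Bourn}, as used in~\cite{EverVdL2}) to obtain the long exact sequence
\[
	\cdots \to \Left_n(F)(K) \to \Left_n(F)(X) \to \Left_n(F)(Y) \xrightarrow{\;\partial_n\;} \Left_{n-1}(F)(K) \to \cdots
\]
after identifying $\H_n(F(C(-)))$ with $\Left_n(F)(-)$ for each of $K$, $X$, $Y$ — legitimate because $\Left_n(F)$ is well defined under these hypotheses (Theorem~\ref{Thm Def L}, applicable since a sequentially right exact $F$ preserving protosplit monomorphisms and binary coproducts satisfies the assumptions needed there).

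Finally I would address naturality in the short exact sequence. Given a morphism of short exact sequences in $\C$, I would first observe that Horseshoe resolutions can be chosen compatibly, or — more robustly — that any two choices of Horseshoe resolution of a short exact sequence are connected by a chain map over the identity, unique up to the approximate chain homotopy of Section~\ref{sec - approx homotopy}; since homotopic chain maps induce equal maps on homology (the non-additive analogue of item~(1) of Theorem~\ref{Facts}, valid in any homological category with binary coproducts), the resulting maps on the $\Left_n(F)$ are independent of the chosen lift. Lifting the given morphism of short exact sequences to a morphism of the Horseshoe-resolution short exact sequences, applying $F$, and using naturality of the classical connecting morphism then gives a morphism of long exact sequences, proving naturality. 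The main obstacle I anticipate is the properness bookkeeping in the second step: one must verify carefully that sequential right exactness of $F$, together with the splitting in each degree supplied by the Horseshoe Lemma, is exactly enough to force $F(C(K))$, $F(C(X))$, $F(C(Y))$ to be proper chain complexes and the degreewise sequence between them to be proper short exact — everything else is either quoted (the Horseshoe variant, the well-definedness of $\Left_n(F)$, the homological-category snake lemma) or is a routine compatibility argument using the homotopy-invariance of homology.
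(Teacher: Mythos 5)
Your overall route is the paper's: resolve the short exact sequence by the Horseshoe Lemma (Proposition~\ref{proposition:horseshoe lemma semiab}) into a degreewise split short exact sequence of projective resolutions, apply $F$, obtain a short exact sequence of proper chain complexes in $\E$, and conclude by the long exact homology sequence of~\cite[Theorem 4.5.7]{Borceux-Bourn}, with homotopy invariance handling independence of choices and naturality. However, there is a genuine flaw in the step where you apply $F$ degreewise: you justify exactness of $0\to F(C_n(K))\to F(C_n(X))\to F(C_n(Y))\to 0$ by saying that $F$ ``preserves binary coproducts (hence \dots these split sequences as split sequences)''. In the non-additive setting this inference fails: the middle object of a split short exact sequence is \emph{not} the coproduct of the outer objects (the paper explicitly remarks, right after the Horseshoe Lemma, that $A_n$ need not be a product of $C_n$ and $E_n$), so coproduct preservation gives nothing here. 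The correct argument---and the one the paper uses---is that a sequentially right-exact functor which preserves protosplit monomorphisms is protoadditive (this is observed just after Definition~\ref{Def Seq Right Exact Functor}), hence preserves split short exact sequences; together with preservation of proper morphisms (automatic for a sequentially right-exact functor) this yields the required short exact sequence of proper chain complexes in~$\E$. Your hypotheses do contain the needed ingredients, but the load-bearing one is sequential right exactness combined with preservation of protosplit monomorphisms, not coproduct preservation (whose role is only to make $F$ subtractive so that Theorem~\ref{Thm Def L} applies).

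A second, smaller omission: to get the tail of the sequence in Figure~\ref{fig long exact sequence with F}, namely $F(K)\to F(X)\to F(Y)\to 0$ rather than a sequence ending in the $\Left_0(F)$'s, you must identify $\Left_0(F)\cong F$; this is Lemma~\ref{Lemma L_0} and should be cited at the end of the argument. With these two repairs your proof coincides with the paper's.
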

\begin{figure}
	$\xymatrix@C=4em{
		\\
		& & \vdots \ar`d[l]`[dll]_-{\delta_{n+1}}[dll]  & \\
		\Left_n(F)(K)\ar[r]^-{\Left_n(F)(k)}& \Left_n(F)(X) \ar[r]^-{\Left_n(F)(f)} & \Left_n(F)(Y)\ar`d[l]`[dll]|-{\delta_n}[dll] &\\
		\Left_{n-1}(F)(K)\ar[r]& \cdots\ar[r] & \Left_{1}(F)(Y)\ar`d[l]`[dll]|-{\delta_1}[dll] & \\
		F(K)\ar[r]^-{F(k)}& F(X)\ar@{-{ >>}}[r]^-{F(f)} & F(Y)\ar[r] & 0\\
		}$
	\caption{Long exact sequence involving the derived functors of $F$} \label{fig long exact sequence with F}
\end{figure}
The conditions on $\C$ are satisfied, for instance, by the variety of Lie algebras over any commutative unitary ring (\ref{Thm Lie Satisfies (P)}), by crossed modules (\ref{Thm Crossed Modules}) and more generally by \ncat-groups.

If the derived functors $\Left_n(F)$ of a functor $F$ are not additive, like the title of this article indicates, then what properties do they have? We prove in Section~\ref{Section Derived Functors} that they are protoadditive functors, which means that they preserve semi-direct products, provided $F$ is a functor between semi-abelian categories which is itself protoadditive and preserves binary coproducts and proper morphisms (Corollary~\ref{Protoadditive between semi-abelian cats}).

\subsection{The context}\label{Subsec Context}
We now explain the terminology in the statements of those results.

A category is \defn{pointed} when it has an object that is initial and terminal at the same time. In such categories, we can define \defn{protosplit subobjects}: such is a subobject $K\leq X$ which is represented by a kernel of a split epimorphism with domain $X$.

A \defn{normal} category in the sense of~\cite{ZJanelidze-Subtractive} is a pointed regular~\cite{Barr-Grillet-vanOsdol} category where every regular epimorphism is a normal epimorphism, i.e., a cokernel of some morphism.

We mainly work in the context of \emph{homological categories}. A category $\C$ is called \defn{(Borceux--Bourn) homological}~\cite{Borceux-Bourn} if it is pointed, regular and (Bourn) protomodular~\cite{Bourn1991}. The protomodularity condition says that the \emph{Split Short Five Lemma} holds in $\C$, or equivalently, that whenever we have a split epimorphism with its kernel as in
\[
	\xymatrix{K \ar@{{ |>}->}[r]^-k & X \ar@<.5ex>[r]^-{f} & Y\text{,} \ar@{{ >}->}@<.5ex>[l]^-{s}}
\]
the object $X$ is ``covered'' or ``generated'' by the outer objects $K$ and $Y$ in the sense that the inclusions $k$ and $s$ (where $k=\ker(f)$ and $s$ splits $f$) are \emph{jointly extremally epimorphic}, which means that they do not both factor through a proper subobject of~$X$. It implies, for instance, that each regular epimorphism is a normal epimorphism. In other words, the category in question is normal.

A \defn{(Janelidze--Márki--Tholen) semi-abelian category}~\cite{Janelidze-Marki-Tholen} is a category which is pointed, (Barr) exact and (Bourn) protomodular, and which has binary coproducts. This stricter definition ensures, for instance, good behaviour of simplicial objects. (In particular, the normalization of any simplicial object is a \emph{proper} chain complex, which means that the images of the differentials are normal subobjects.)

Examples include any abelian category, the category of groups $\Gp$, and more generally any variety of $\Omega$-groups~\cite{Higgins}, of which by definition the signature admits a group operation and a unique constant---such as any variety $\LieK$ of Lie algebras over a ring $\K$ or any variety of non-associative algebras over $\K$~\cite{VdL:nonassociativealgebra}. An important example of a homological category which is not semi-abelian is the category of topological groups. The category $\OrdGrp$ of preordered groups is known to be normal but not homological~\cite{MMClementinoNMartinsFerreiraAMontoli2019-Preordered}: a preordered group being a group equipped with a preorder (a reflexive and transitive relation) for which the group operation is monotone; arrows are monotone group homomorphisms.

Some of the above examples satisfy Condition \P. Indeed, it will be automatic for abelian categories; it will hold for $\Gp$ by the Nielsen--Schreier theorem; for $\LieK$ (where $\K$ is a commutative ring) it is a non-trivial result (see Section~\ref{sec - Lie algebra P condition}); while in the category of associative algebras over a field, for instance, it is still an open question.

A key concept when we are dealing with derived functors is the notion of a \emph{projective chain resolution}. Let $\C$ be a pointed category with kernels and let~$X$ be an object of~$\C$. A~\defn{projective chain resolution of $X$} consists of a positively graded chain complex $(C,d)$ as in Figure~\ref{Figure Projective Resolution}, all of whose objects $C_i$ are projective (with respect to the class of regular epimorphisms) together with a regular epimorphism $d_0\colon C_0\to X$ such that for each $n\geq 0$, the lifting $\bar{d}_{n+1}\colon {C_{n+1}\to \Ker(d_n)}$ of $d_{n+1}$ over the kernel $\Ker(d_n)$ of $d_n$ is a regular epimorphism. Once we assume that the category $\C$ has enough projectives, recursively constructed projective chain resolutions exist for any object of the category.

In the context of a normal category, a resolution of $X$ is an exact sequence as in Figure~\ref{Figure Projective Resolution Intro}. Exactness in $C_n$, for instance, means that the morphism $\bar{d}_{n+1}$ induced by the kernel of $d_n$ is a normal epimorphism, which is equivalent to saying that the image of $d_{n+1}$ is the kernel of $d_n$. Here, the interpretation in terms of homology as in the figure's caption makes sense.

In a pointed category, a morphism is \defn{proper} if it can be factorized as a regular epimorphism followed by a normal monomorphism. In a normal category, this means that the monic part of the image factorization is a kernel, while the epic part is a cokernel. A functor $F\colon {\C \to \E}$ between between pointed categories with kernels and cokernels $\C$ and~$\E$ will be called a \defn{sequentially right-exact functor}, as in ~\cite{PVdL1}, if it sends an exact sequence
\[
	\xymatrix{K \ar[r]^-{k} & A \ar@{-{ >>}}[r]^-{f} & B \ar[r] & 0,}
\]
so a proper morphism $k$ with its cokernel $f$, to the exact sequence
\[
	\xymatrix{F(K) \ar[r]^-{F(k)} & F(A) \ar@{-{ >>}}[r]^-{F(f)} & F(B) \ar[r] & 0.}
\]
These functors are meant to replace the right exact functors from the abelian context.

If we want to mimic additive functors outside the context of additive categories, we can use \emph{protoadditive functors}~\cite{Everaert-Gran-TT,EG-honfg}. A functor $F\colon \C \to \E$ between pointed categories $\C$ and $\E$ is \defn{protoadditive} if it preserves kernels of split epimorphisms.

\subsection{The abelian setting}\label{Subsec Abelian Setting}
Coming back to the above theorems, we see that the categories $\C$ and $\E$ satisfy conditions which are weaker than abelianness. On the other hand, we impose seemingly stronger conditions on the functors to compensate for the weaker context. In the first theorem, any functor between abelian categories which preserves binary coproducts is additive. Therefore, the assumption of being protoadditive is redundant in the abelian setting. And in the third one, in the abelian setting, we only require right exactness of the functor. Here we have to impose two non-trivial assumptions---preservation of protosplit monomorphisms and the preservation of binary coproducts---which are redundant in the abelian case, but just like protoadditivity turn out to be relatively strong in the non-additive setting.

\section{Closedness of projectives under protosplit subobjects}\label{sec - condition P}
As recalled in the introduction, the first step in the definition of the derived functors of a functor is the construction of a projective resolution for any given object. We need to prove that any two such resolutions are chain homotopically equivalent. Crucial for the existence of such a chain homotopy equivalence is Condition \P\ that \emph{the class of projective objects is closed under protosplit subobjects}. The aim of the present section is to explain the details, necessary to understand what this condition means. The actual proof that under Condition \P, a chain homotopy equivalence can indeed always be constructed is the subject of Section~\ref*{sec - approx homotopy}.

\subsection{Projective and free objects}\label{sec - def projective and free objects}
We start by recalling what are projective objects (relative to the regular epimorphisms) in a category.

\begin{definition}\label{def_projective}
	An object $P$ of a category $\C$ is said to be \defn{(regular) projective} when, given a regular epimorphism $p\colon X\twoheadrightarrow Y$ and a morphism $f\colon P\to Y$ there exists a lifting $g\colon P\to X$ of $f$ over $p$:
	\[
		\xymatrix{
		& X\ar@{>>}[d]^-{p} \\
		P \ar@{-->}[ur]^-{g} \ar[r]_-{f} & Y
		}
	\]
	a morphism $g$ such that $pg=f$.

	A category has \defn{enough (regular) projectives} when each object $A$ is a regular quotient of a projective object: there exists a projective object $P$ together with a regular epimorphism $p\colon P \twoheadrightarrow A$.
\end{definition}

\begin{remarks}
	\begin{enumerate}
		\item Outside the abelian setting, the classes of \emph{epimorphisms} and \emph{regular epimorphisms} often do not coincide. For instance, in the variety of unitary rings, the inclusion $i\colon\mathbb{Z}\to \mathbb{Q}$ is an epimorphism and not a surjection. However, in any variety of algebras, the regular epimorphisms are precisely the surjective morphisms of algebras.
		\item It is easy to see that in any category, the class of projectives is closed under coproducts and under retracts. \label{rectract_projective and coproduct}
		\item Recall that a category with finite limits and coequalizers of kernel pairs is \defn{regular} when its regular epimorphisms are pullback-stable. It is known that existence of enough projectives in $\C$ implies regularity of~$\C$, as soon as the needed (co)limits exist~\cite{Quillen,VdLinden:Simp}.
		\item It is easy to see that an object $P$ in a regular category is projective if and only if each regular epimorphism $X\twoheadrightarrow P$ is a split epimorphism.
	\end{enumerate}
\end{remarks}

In a variety of algebras, a convenient class of projective objects is the class of free objects\label{free implies projective}: objects which belong to the image of the left adjoint $T$ to the forgetful functor $U$ to $\Set$. This follows easily from the definition. In certain varieties---see Example~\ref{ex_schreier variety} below---the two classes coincide, but in general, the converse does not hold. However, in a variety of algebras, an object is a projective object if and only it is a retract of a free object.

\subsection{On Condition \P}

The proof of the ``classical'' Horseshoe Lemma in abelian categories (\cite[Proposition~V.2.2]{Cartan-Eilenberg}, \cite[Lemma 2.2.8]{Weibel}) depends on the class of projective objects being closed under biproducts (see for instance~\cite[Proposition 4.6.3]{Borceux:Cats1}). In our present context (where biproducts are not available) this is replaced by closedness under coproducts combined with the following condition, which in the context of a homological category we will show to be equivalent to a non-additive version of the Horseshoe Lemma---see Section~\ref{sec - non add Horseshoe lemma}.

\begin{definition}\label{Def Protosplit Subobject}
	In a pointed category, a subobject $K\leq X$ is called a \defn{protosplit subobject} when it is represented by a kernel of a split epimorphism: there exist $f\colon X\to Y$, $s\colon Y\to X$ and a representing monomorphism $k\colon K\to X$ such that $f\comp s=1_Y$ and $k=\ker(f)$.
\end{definition}

\begin{remark}
	In the setting of a normal category (where every regular epimorphism is normal), this means there exists a split short exact sequence
	\begin{equation}\label{SSES}
		\xymatrix{0 \ar[r] & K \ar@{{ |>}->}[r]^-k & X \ar@{-{ >>}}@<.5ex>[r]^-{f} & Y \ar[r] \ar@{{ >}->}@<.5ex>[l]^-{s} & 0}
	\end{equation}
	whose kernel part is the representing monomorphism $k\colon K\to X$.
\end{remark}

\begin{definition}\label{Def Condition (P)}
	We say that a pointed category satisfies \defn{Condition (P)} when \emph{the class of projective objects is closed under protosplit subobjects}: given a protosplit subobject $K\leq X$, if $X$ is projective, then $K$ is projective.
\end{definition}

\begin{remark}
	By the closedness under retracts of the class of projectives, we know that the object $Y$ that occurs in Definition~\ref{Def Protosplit Subobject} is projective as well, as a retract of the projective object $X$.
\end{remark}

\begin{example}\label{ex_ab_cat_P_condition}
	Each additive category with finite limits, and thus also each abelian category, satisfies Condition \P. It is indeed well known that here, the object $K$ in~\eqref{SSES} is a retract of $X$, via the isomorphism of split short exact sequences involving $X\cong K\oplus Y$. So the claim follows from the closedness of the class of projective objects under retracts.
\end{example}

\begin{example}\label{ex_schreier variety}
	A variety of algebras is called a \defn{Schreier variety} when a subobject of a free object is again free. Clearly, all Schreier varieties satisfy Condition \P. Indeed, any projective object is a subobject of a free object and so ``projective = free'' in this context.

	Many examples are known; here are some selected references: groups~\cite{Johnson_DL,Schreier,Nielsen}, Lie algebras over a field~\cite{Shirshov,Witt1956,Umirbaev}, non-associative algebras over a field~\cite{Kurosh:art,Umirbaev}, commutative and anti-commutative algebras over a field~\cite{Shirshov1954,Umirbaev}, Lie $p$-algebras over a field~\cite{Umirbaev,Witt1956}, Lie superalgebras over a field~\cite{Umirbaev,Mikhalev,Shtern}, pre-Lie algebras over a field of characteristic zero~\cite{DotsenkoUmirbaev}, Akivis algebras over a field~\cite{ShestakovUmirbaev}, modules over a principal ideal domain~\cite[Chapter~1, Theorem~5.1]{Hilton-Stammbach}, so in particular abelian groups~\cite[Section~1.2]{Johnson_DL}, which are modules over the ring of integers.

	Note that a variety of modules need not be Schreier in general, even through by Example~\ref{ex_ab_cat_P_condition}, it always satisfies Condition \P.
\end{example}

We shall later see that, just as in the abelian case, if the domain of a functor $F$ is a Schreier variety, then each of its derived functors $\Left_n(F)$ for $n\geq 2$ is trivial (Corollary~\ref{Cor Syzygy Schreier}). So while Schreier varieties do satisfy Condition \P, they are not very interesting from a homological-algebraic point of view. This means we have to focus on non-Schreier varieties satisfying \P. It seems, however, that Condition \P\ has not yet been studied at all in the literature, so currently examples are scarce. We present a non-trivial one in Section~\ref{sec - Lie algebra P condition}, and another one in Section~\ref{Sec:CrossedModules}.

Varieties of non-associative algebras over a field need not be Schreier:

\begin{example}
	Kurosh proved in~\cite{Kurosh:art} that the free associative algebra with one generator already contains subalgebras which are not free. His construction also shows that other types of algebras over a field are not Schreier: alternative, right- or left-alternative, or Jordan algebras over a field.
\end{example}

\begin{remark}
	In~\ref{ex_schreier variety}, the last two examples are abelian categories---where \P\ is automatically satisfied by Example~\ref{ex_ab_cat_P_condition}. Note, however, that there do of course exist categories of modules over a ring which are not Schreier.
\end{remark}

\subsection{Characterization in terms of free objects}
We now investigate what happens with Condition \P, if we assume that the middle object $X$ in \eqref{SSES-Non-Normal} is free rather than just projective. \emph{A priori}, what we find is a weaker condition. However, it turns out that in any pointed Mal'tsev variety---which~\cite{Maltsev} admits a ternary operation $p$ such that $p(x,x,z)=z$ and $p(x,z,z)=x$---hence \emph{a fortiori} in any homological variety, the two are equivalent.

\begin{proposition}\label{propo_equivalent_conditions}
	Let $\V$ be a pointed Mal'tsev variety of algebras, with forgetful functor $U\colon \V\to \Ens$ and its left adjoint $T\colon \Ens\to \V$. The following conditions are equivalent:
	\begin{tfae}
		\item for any protosplit subobject $K\leq X$ as in Definition~\ref{Def Protosplit Subobject}, if $X$ is projective then $K$ is projective;
		\item for any protosplit subobject $K\leq X$, if $X$ is free then $K$ is projective;
		\item for any split epimorphism $g\colon A \to B$ in $\Ens$, the kernel of $T (g)$ is projective.
	\end{tfae}
\end{proposition}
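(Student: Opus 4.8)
### Proof Plan

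The plan is to prove the cycle of implications $(i) \Rightarrow (ii) \Rightarrow (iii) \Rightarrow (i)$, where the first implication is essentially trivial (every free object is projective) and the real content lies in $(ii) \Rightarrow (iii)$ and $(iii) \Rightarrow (i)$, exploiting the Mal'tsev hypothesis to bridge between free and projective objects.

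For $(i) \Rightarrow (ii)$: this is immediate, since a free object is in particular projective, so the hypothesis in $(ii)$ is a special case of that in $(i)$. For $(ii) \Rightarrow (iii)$: given a split epimorphism $g\colon A\to B$ in $\Ens$ with section $s$, apply the left adjoint $T$. Since $T$ is a left adjoint it preserves coproducts and split epimorphisms, so $T(g)\colon T(A)\to T(B)$ is a split epimorphism (split by $T(s)$) with $T(A)$ free. Its kernel $K\leq T(A)$ is then a protosplit subobject of a free object, so by $(ii)$ it is projective; this is exactly the conclusion of $(iii)$.

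The main work is $(iii) \Rightarrow (i)$. Here I would start with an arbitrary split short exact sequence as in~\eqref{SSES} with $X$ projective, and I want to show $K$ is projective. Since $X$ is projective in a variety, it is a retract of a free object $T(S)$ for some set $S$; write $r\colon T(S)\to X$, $j\colon X\to T(S)$ with $rj=1_X$. The idea is to pull the split epimorphism $f\colon X\to Y$ back along $r$, or rather to build from $f$ a split epimorphism on $T(S)$ whose kernel relates to $K$. The clean way: form the composite $fr\colon T(S)\to Y$; this is a split epimorphism (split by $js$), and because we are in a Mal'tsev — hence protomodular — variety, one can analyse its kernel. Alternatively, and more robustly, I would choose a set-level split epimorphism $g\colon S'\to S''$ in $\Ens$ realizing $f$ appropriately (using that $U$ preserves split epis and that $X=T(S)/{\sim}$), arrange that $\Ker(T(g))$ surjects onto $K$, and then conclude $K$ projective from $(iii)$ together with the fact that in a regular Mal'tsev category a regular quotient of a projective protosplit subobject can be controlled. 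The precise mechanism: the kernel $\Ker(T(g))$ is projective by $(iii)$; one builds a regular epimorphism $\Ker(T(g)) \twoheadrightarrow K$ compatible with the inclusions into $T(g)$ versus $f$, using the Split Short Five Lemma (available by protomodularity) to identify $K$ as a retract — in the category of points over $Y$ — of $\Ker(T(g))$, whence $K$ is a retract of a projective object and therefore projective.

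The hard part will be $(iii) \Rightarrow (i)$, specifically producing the comparison morphism exhibiting $K$ as a retract (or regular quotient that splits) of $\Ker(T(g))$: this is where the Mal'tsev/protomodular structure is genuinely needed, since one must lift the section $j\colon X\to T(S)$ to a morphism of split short exact sequences, and then transfer projectivity along the resulting split monomorphism on kernels. I expect this to reduce to a diagram chase in the fibre of split epimorphisms over $Y$, using that a split monomorphism between kernels of split epis, both lying over the same object, which admits a retraction in the total category, yields a retraction of the kernels themselves — a consequence of protomodularity. Everything else — preservation properties of $T$, closedness of projectives under retracts (Remarks, item~\ref{rectract_projective and coproduct}), and the identification of protosplit subobjects with kernels of split epis — is routine and already recorded in the excerpt.
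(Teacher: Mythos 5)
Your treatment of (i)$\Rightarrow$(ii) and (ii)$\Rightarrow$(iii) matches the paper and is fine. Note, however, that your ``clean way'' for the remaining implication --- forming $fr\colon T(S)\to Y$ split by $js$ --- actually proves (ii)$\Rightarrow$(i) (the kernel of $fr$ is a protosplit subobject of the free object $T(S)$, and $r$, $j$ restrict to the kernels, exhibiting $K$ as a retract of it; no Mal'tsev condition is even needed for this). Since it invokes (ii) rather than (iii), it cannot close your cycle (i)$\Rightarrow$(ii)$\Rightarrow$(iii)$\Rightarrow$(i); everything therefore hinges on your ``robust'' alternative for (iii)$\Rightarrow$(i), and that is where there is a genuine gap.

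In that alternative you take a set-level split epimorphism (in effect $g=U(f)$), get $\Ker(TU(f))$ projective from (iii), and assert that the Split Short Five Lemma and a diagram chase ``in the fibre over $Y$'' will exhibit $K$ as a retract of $\Ker(TU(f))$. This is precisely the step that does not come for free, and the tools you name will not deliver it: the comparison square relating $TU(f)$ to $f$ does not live in a single fibre over $Y$ (the codomain of $TU(f)$ is $TU(Y)$, not $Y$), and the Split Short Five Lemma recognizes isomorphisms --- it does not manufacture sections. An arbitrary section of $\varepsilon_X\colon TU(X)\to X$ (which exists because $X$ is projective) need not be compatible with $TU(f)$ and a chosen section of $\varepsilon_Y$, so it does not restrict to the kernels. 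The missing idea, which is the heart of the paper's proof, is this: since $\varepsilon_X$ and $\varepsilon_Y$ are split (as $X$, $Y$ are projective) and the variety is regular Mal'tsev, the counit naturality square at $f$ is a \emph{regular pushout}, i.e.\ the comparison $\univ{\varepsilon_X}{TU(f)}\colon TU(X)\to X\times_Y TU(Y)$ is a regular epimorphism; one chooses a section $j_Y$ of $\varepsilon_Y$, induces via the pullback a section $j_Y'$ of the projection to $X$ with $f'j_Y'=j_Yf$, and then uses projectivity of $X$ a \emph{second} time to lift $j_Y'$ through this comparison, obtaining $j_X$ with $\varepsilon_Xj_X=1_X$ and $TU(f)j_X=j_Yf$; only such a compatible $j_X$ restricts to a section of the kernel comparison $\beta\colon \Ker(TU(f))\to K$. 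Finally, your fallback phrase that ``a regular quotient of a projective protosplit subobject can be controlled'' is false as stated: $\beta$ is indeed a regular epimorphism, but a regular quotient of a projective object need not be projective, so nothing short of the splitting of $\beta$ --- the very thing you defer to a diagram chase --- finishes the argument.
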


\begin{proof}
	Since, in a variety of algebras, any free object is projective, (i) implies (ii). The split epimorphism of sets $g$ in the statement of (ii) induces a diagram
	\begin{equation}\label{SSES-Non-Normal}
		\xymatrix{0 \ar[r] & K \ar@{{ |>}->}[r]^-k & X \ar@{-{>>}}@<.5ex>[r]^-{f} & Y \ar@{{ >}->}@<.5ex>[l]^-{s}}
	\end{equation}
	as in Definition~\ref{Def Protosplit Subobject}, where $X=T (A)$ and $Y=T (B)$. Since $T (A)$ is a free object on $A$, we have that (ii) implies (iii).

	We now focus on the proof that (iii) implies (i). So consider a split epimorphism $f$ with section $s$ and kernel $k$ as in~\eqref{SSES-Non-Normal}. Covering each object with a free object, we obtain the diagram in Figure~\ref{diagm: (ii) implies (i)}, where the horizontal sequences are split epimorphisms with their kernel, $T U (X)$ and $T U (Y)$ are free objects and $\varepsilon_X$ and $\varepsilon_Y$ are regular epimorphisms.
	\begin{figure}
		%\resizebox{\textwidth}{!}
		{
		\(
		\xymatrix@=3.5em{
		0\ar[r]  & K' \ar@{{ |>}->}[r]^-{k'} \ar@{-->}[d]_-{\beta}  & T U (X) \ar@{>>}@<.5ex>[r]^-{T U (f)} \ar@{>>}[d]_-{\varepsilon_X} & T U (Y) \ar@{{ >}->}@<.5ex>[l]^-{T U (s)} \ar@{>>}[d]^ -{\varepsilon_Y}  \\
		0 \ar[r] & K \ar@{{ |>}->}[r]_-{k} & X \ar@{>>}@<.5ex>[r]^-{f} & Y \ar@{{ >}->}@<.5ex>[l]^-{s}
		}
		\)
		}
		\caption{The implication (iii) $\To$ (i)} \label{diagm: (ii) implies (i)}
	\end{figure}
	The naturality of $\varepsilon$ implies that $f\comp \varepsilon_X=\varepsilon_Y \comp T U (f)$ and hence we have the morphism $\beta$ determined by the universal property of the kernel of~$f$.  We claim that $\beta$ is a split epimorphism. Then $K$ will be a retract of $K'$, which is a projective object by (iii) applied to the split epimorphism of sets $U(f)\colon U(X)\to U(Y)$.

	Let us prove our claim. Since the codomains of $\varepsilon_X$ and $\varepsilon_Y$ are projective objects, they are split epimorphisms. Since $\V$ is a regular Mal'tsev category, the right-hand square is a regular pushout~\cite[Proposition 3.2]{Bourn2003}, which means that the comparison $\univ{\varepsilon_X}{TU(f)}$ from $TU(X)$ to the pullback $X\times_UTU(Y)$ in Figure~\ref{fig - regular pushout (ii) implies (i)} is a regular epimorphism.
	\begin{figure}
		%\resizebox{\textwidth}{!}
		{
		\(		\xymatrix{
		T U (X) \ar@{-->>}[dr]|-{\univ{\varepsilon_X}{TU(f)}} \ar@{-{>>}}@<0.5 ex>[rr]^-{T U (f)} \ar@{>>}[dd]^-(.3){\varepsilon_X} && T U (Y) \ar@{{ >}->}@<0.5 ex>[ll]^-{T  U (s)} \ar@{>>}[dd]_ -{\varepsilon_Y}   \\
		& X\times_{Y}T U (Y) \ar@{-{>>}}[ur]_ -{f'}  \ar@{>>}[dl]^-(.3){\varepsilon_Y'} &\\
		X \ar@/^ 1.5pc/@{-->}[uu]^ -{j_X}  \ar@/^1pc/@{-->}[ur]^-{j_Y'} \ar@{-{>>}}@<0.5 ex>[rr]^-{f}  && Y  \ar@{{ >}->}@<0.5 ex>[ll]^-{s} \ar@/_1.5pc/@{-->}[uu]_-{j_Y}
		}
		\)}
		\caption{Constructing a splitting for $\beta$} \label{fig - regular pushout (ii) implies (i)}
	\end{figure}
	Let $j_Y\colon Y\to T U(Y)$ be a section of $\varepsilon_Y$. By the pullback property we obtain a section $j'_Y$ of $\varepsilon_Y'$ such that $j_Y\comp f = f' \comp j_Y'$.

	Since $X$ is projective, we can define a section of $\varepsilon_X$ as a lifting of $j_Y'$ over the regular epimorphism $\univ{\varepsilon_X}{TU(f)}$, so that $\univ{\varepsilon_X}{TU(f)}\comp j_X= j_Y '$. This morphism $j_ X\colon X\to T U(X)$ is indeed a section of $\varepsilon_X$, since $\varepsilon_X\comp j_X=\varepsilon_Y' \comp \univ{\varepsilon_X}{TU(f)} \comp j_X=\varepsilon_Y' \comp j_Y ' = 1_X$. We see that
	$$T U(f)\comp j_X=f'\comp \univ{\varepsilon_X}{TU(f)} \comp j_X=f' \comp j_Y' = j_Y\comp f.$$ By the universal property of $\kker (T  U (f))$, there exists a morphism $j_K\colon {K\to K'}$ such that $\kker (T U(f)) \comp j_K = j_X \comp k $. Now by definition of $\beta$ and $j_K$ we have
	$$k \comp 1_K = 1_X \comp k = \varepsilon_X \comp j_X \comp k = \varepsilon_X \comp k' \comp j_K=k \comp \beta \comp j_K $$
	and since $k$ is monic this implies that $\beta \comp j_K = 1_K$, proving $\beta$ to be a split epimorphism.
\end{proof}

Proposition~\ref{propo_equivalent_conditions} will help us in the next section, where we give a class of examples of semi-abelian varieties which need not be Schreier but do satisfy~\P.

\section{Example: Lie algebras}\label{sec - Lie algebra P condition}
Our aim here is to prove that the semi-abelian variety $\LieK$ of Lie algebras over any commutative unitary ring $\K$ satisfies Condition \P.

\subsection{Free Lie algebras}
It is well known---see for instance~\cite[Section 0.2]{Reutenauer}---how to construct the free Lie algebra $T(X)$ for a given set $X$: the elements of $T(X)$ are the (non-commutative, non-associative) polynomials with no constant term and variables in $X$ such that the binary product, which is simply a linear extension of the concatenation of monomials with variables in $X$, satisfies the axioms of a Lie bracket.

Lie algebras over a field form a Schreier variety. The original proof due to Shirshov~\cite{Shirshov} uses the following strategy. First find a set which generates the given subalgebra of a given free Lie algebra; then prove that this subalgebra is \emph{freely} generated---see for instance~\cite[Theorem 2.5]{Reutenauer} for a proof written in English. The assumption that the coefficients ring is a field is a key assumption in the first step of the proof. Indeed, we need the modules involved in this construction to always admit a base---which is false when the ring is not a field. For this reason, the proof does not work for unital commutative rings, as was already noticed by Shirshov in~\cite{Shirshov}. Using an idea of Bahturin in~\cite{Bahturin}, whenever $\K$ is not a field we can build an example of a subalgebra of a free $\K$-Lie algebra which is no longer free.

\begin{example}[An explicit construction of a non-free subalgebra]\label{ex: Lie_K_not_Sch_var PID}
	Let us consider a unital commutative ring $\K$ and a set $X$ with $m$ elements, where $m>1$. Consider the free Lie algebra $T(X)$ and its subalgebra $H\coloneq \alpha T(X)$, where $\alpha\in \K$ is a non-invertible element of $\K$, different from $0$. By definition of $H$, the subalgebra admits a natural grading decomposition $H=\alpha T(X)_1\oplus \alpha T(X)_2\oplus \cdots $ inherited from the grading of $T(X)$.

	Let us assume that $H$ is a free Lie algebra on a set $Y$. Then its abelianization $H^{\mathrm{ab}}\coloneq H/[H,H]$ is isomorphic to the free $\K$-module $\bigoplus_{y\in Y}\K y$, by composition of left adjoints. However, by the grading decomposition of $H$, its abelianization $H^{\mathrm{ab}}$ contains
	\begin{align*}
		\alpha T_2(X)/[\alpha T_1(X),\alpha T_1(X)] \cong \begin{cases}
			                                                  T_2(X)/\alpha T_2(X) & \text{if $\alpha^2\neq 0$,} \\
			                                                  \alpha T_2(X)        & \text{otherwise.}
		                                                  \end{cases}
	\end{align*}
	In both situations, there are non-trivial torsion elements in the module $H^{\mathrm{ab}}$---which contradicts the fact that it is a free module. This explains why $H$ cannot be free.
\end{example}

\subsection{Lazard's elimination theorem}
We fix a commutative unital ring $\K$ and work towards a proof that Condition \P\ holds in the category $\LieK$. We can consider the split short exact sequence
\[
	\xymatrix{0 \ar[r] & K \ar@{{ |>}->}[r]^-k & T(X) \ar@{-{ >>}}@<.5ex>[r]^-{T(f)} & T(Y) \ar[r] \ar@{{ >}->}@<.5ex>[l]^-{T(s)} & 0}
\]
where $(f,s)$ is any split epimorphism $f$ with a section $s$ in $\Set$. We claim that, in this situation, we can prove that $K$ is a retract of a free Lie algebra and so a projective object. Thanks to Proposition~\ref{propo_equivalent_conditions}, we may then conclude that $\LieK$ satisfies Condition \P. The proof of this claim is based on the next result called \defn{Lazard's elimination theorem}.

\begin{theorem}\cite{Lazardinco}\label{thm : Lazard elimination}
	Let $T(X)$ be the free Lie algebra over a commutative unital ring $\K$ and let $A$ and $B$ two disjoint sets such that $X=A\cupdot B$. Then we have an isomorphism
	\[
		T(X)\cong T(M(B) \times A) \rtimes_{\phi} T(B)
	\]
	where $M(B)$ is the free monoid over $B$ and $\phi$ is an appropriate action of $T(B)$ on $T(M(B) \times A)$.
\end{theorem}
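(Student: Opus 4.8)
The plan is to recognise the asserted decomposition as the split short exact sequence attached to the canonical retraction of $T(X)$ onto $T(B)$, and then to identify its kernel, by a universal-property argument, as a free Lie algebra on $M(B)\times A$.

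\emph{Step 1: the semidirect product.} Let $\pi\colon T(X)\to T(B)$ be the homomorphism determined on generators by $\pi(b)=b$ for $b\in B$ and $\pi(a)=0$ for $a\in A$, and let $\sigma\colon T(B)\to T(X)$ be induced by the inclusion $B\subseteq X$. Then $\pi\sigma=1_{T(B)}$, so, writing $\mathfrak{a}\coloneq\kker(\pi)$ (the ideal generated by $A$), we obtain a split short exact sequence of Lie algebras
\[
	\xymatrix{0 \ar[r] & \mathfrak{a} \ar@{{ |>}->}[r] & T(X) \ar@{-{ >>}}@<.5ex>[r]^-{\pi} & T(B) \ar[r] \ar@{{ >}->}@<.5ex>[l]^-{\sigma} & 0,}
\]
and hence $T(X)\cong\mathfrak{a}\rtimes_{\phi}T(B)$, where $\phi(h)(x)=[\sigma(h),x]$ is the restriction to the ideal $\mathfrak{a}$ of the adjoint action. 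It therefore suffices to show that $\mathfrak{a}$ is a free Lie algebra on $M(B)\times A$, with $(w,a)$, for $w=b_1\cdots b_k\in M(B)$, corresponding to the left-normed bracket $\lambda(w,a)\coloneq[b_1,[b_2,[\cdots,[b_k,a]\cdots]]]$ (and $\lambda(1,a)=a$ for the empty word); transporting $\phi$ along this isomorphism then yields the statement.

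\emph{Step 2: the kernel is free.} Rather than checking surjectivity and injectivity of $\lambda$ by hand, I would construct mutually inverse homomorphisms. Put $S\coloneq M(B)\times A$. Since $T(S)$ is free, for each $b\in B$ the set map $(w,a)\mapsto(bw,a)$ extends uniquely to a derivation $\partial_b$ of $T(S)$; since $T(B)$ is free, $b\mapsto\partial_b$ extends to a Lie algebra homomorphism $\phi'\colon T(B)\to\Der(T(S))$, giving a semidirect product $T(S)\rtimes_{\phi'}T(B)$. The set map $X\to T(S)\rtimes_{\phi'}T(B)$ sending $a\mapsto((1,a),0)$ and $b\mapsto(0,b)$ extends to a Lie algebra homomorphism $\Theta\colon T(X)\to T(S)\rtimes_{\phi'}T(B)$. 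In the other direction, the extension $\ell\colon T(S)\to T(X)$ of $\lambda$ and the map $\sigma\colon T(B)\to T(X)$ are compatible with the actions, i.e. $\ell(\phi'(h)(s))=[\sigma(h),\ell(s)]$ for all $h$, $s$: this is checked first for $h\in B$ and $s\in S$ directly from the definition of $\lambda$, then propagated to all $s\in T(S)$ because both sides are derivations over $\ell$, and to all $h\in T(B)$ because the identity is $\K$-linear in $h$ and closed under the bracket by the Jacobi identity. Hence $\ell$ and $\sigma$ assemble into a homomorphism $\Xi\colon T(S)\rtimes_{\phi'}T(B)\to T(X)$. A comparison on generators gives $\Xi\Theta=1$ and $\Theta\Xi=1$; for the latter, $\Theta\Xi((w,a),0)=[(0,b_1),[\cdots,[(0,b_k),((1,a),0)]\cdots]]=(\phi'(b_1)\cdots\phi'(b_k)(1,a),0)=((w,a),0)$. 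Thus $\Theta$ is an isomorphism, and it restricts to an isomorphism $T(S)\xrightarrow{\ \sim\ }\kker(\pi)=\mathfrak{a}$, which completes the proof.

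\emph{Where the difficulty lies.} Step 1 and all the ``on generators'' verifications are routine. The one delicate point is in Step 2: that $\phi'$ is well defined as a homomorphism into $\Der(T(S))$, and — the crux — that $\Xi$ is well defined, which reduces to the fact that a twisted derivation $T(S)\to T(X)$ over $\ell$ is determined by its values on the free generating set $S$, together with the bracket-closure computation above. I would also stress that no hypothesis on $\K$ beyond being commutative and unital is needed, since the argument uses only the universal property of free Lie algebras, the identification $\Der_{\ell}(T(S),M)\cong M^{S}$, and the existence of semidirect products of Lie algebras over any commutative ring. (An alternative route goes through the Poincaré--Birkhoff--Witt theorem: $U(T(X))$ is the tensor algebra $\K\langle X\rangle$, and the unique factorisation $M(X)\cong M(M(B)\times A)\cdot M(B)$ of the free monoid exhibits $\K\langle X\rangle$ as free over $\K\langle B\rangle$ on a copy of $\K\langle M(B)\times A\rangle$; but this requires care with PBW over a general ring.)
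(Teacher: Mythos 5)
Your proof is correct, and it is essentially the argument the paper itself relies on: for this theorem the paper only cites Bourbaki (Ch.~II, \S 2.9, Prop.~10) and then sketches exactly your strategy --- the split short exact sequence over $T(B)$, the kernel generated by the iterated brackets $[b_1,[\cdots,[b_k,a]\cdots]]$, and freeness established by building the action of $T(B)$ by derivations on $T(M(B)\times A)$ and comparing the two semidirect products on generators. Your write-up fleshes out that sketch correctly (including the two delicate points: $\phi'$ landing in $\Der(T(S))$ and the $\ell$-derivation argument for the compatibility of $\Xi$ with the action), so there is nothing to add.
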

\begin{proof}
	See for instance Proposition 10 in Section 2.9 of~\cite{BourbakiLieAlgPart1} for a proof.
\end{proof}

The action $\phi$ may be codified as a Lie algebra homomorphism
\[
	\phi\colon T(B)\to \Der(T(M(B)\times A))
\]
whose codomain is the Lie algebra of all derivations of the free algebra $T(M(B)\times A)$.

The main idea behind the proof of this theorem is to consider the split short exact sequence
\[
	\xymatrix{0 \ar[r] & K \ar@{{ |>}->}[r]^-k & T(A\cupdot B) \ar@{-{ >>}}@<.5ex>[r]^-{g} & T(B) \ar[r] \ar@{{ >}->}@<.5ex>[l]^-{t} & 0}
\]
where $g$ is such that $a\in A\mapsto 0 $ and $b\in B\mapsto b$ and $t$ its canonical splitting ($b\in B\mapsto b$). We then notice that $K$, as a kernel of $g$, not only \emph{contains} the element $a$, as well as its image through (composites of) all inner derivations induced by elements of $B$ (so $\mathrm{ad}_b(a)\coloneq[b,a]$ and $\mathrm{ad}_{b_1}\cdots \mathrm{ad}_{b_n}(a) = [b_1,\dots ,[b_n,a]\cdots]$ where $b\in B$ and $b_i\in B$ for all $1\leq i\leq n$); but in fact, those elements generate $K$. It is, however, not clear at first whether $K$ is \emph{freely} generated by those elements. The theorem tell us that this is indeed the case. The proof depends on finding a suitable action $\phi$ of $T(B)$ on ${\Der(T(M(B) \times A))}$, taking into account that~$K$ is closed under inner derivations induced by the elements of $B$.

We translate the semi-direct product of Theorem~\ref{thm : Lazard elimination} into the split short exact sequence
\[
	\xymatrix{0 \ar[r] & T(M(B)\times A) \ar@{{ |>}->}[r] & T(M(B)\times A) \rtimes_{\phi} T(B) \ar@{-{ >>}}@<.5ex>[r]^-{g} & T(B) \ar[r] \ar@{{ >}->}@<.5ex>[l]^-{t} & 0.}
\]

\begin{theorem}\label{Thm Lie Satisfies (P)}
	For $\K$ any commutative ring with unit, the semi-abelian variety $\LieK$ satisfies Condition \P.
\end{theorem}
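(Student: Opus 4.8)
The plan is to apply Proposition~\ref{propo_equivalent_conditions}. Since $\LieK$ is a pointed Mal'tsev variety (indeed it is semi-abelian), Condition~\P\ is equivalent to condition~(iii) of that proposition, so it suffices to prove that for every split epimorphism $f\colon S\to Q$ in $\Set$, with a chosen section $s\colon Q\to S$, the kernel $K$ of $T(f)\colon T(S)\to T(Q)$ is projective; in fact I will show that $K$ is a \emph{free} Lie algebra. Since $fs=1_Q$, the section $s$ is injective, so we may identify $Q$ with the subset $B\defeq s(Q)$ of $S$; setting $A\defeq S\setminus B$ we obtain a partition $S=A\cupdot B$, and $f$ restricts to a bijection $B\to Q$ whose inverse is the corestriction of $s$. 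The idea is then to transform $T(f)$ into the shape required by Lazard's elimination theorem (Theorem~\ref{thm : Lazard elimination}) without changing the isomorphism type of its kernel.

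This I would do by a change of free generators. Define an algebra homomorphism $\Psi\colon T(S)\to T(S)$ on generators by $\Psi(b)=b$ for $b\in B$ and $\Psi(a)=a-s(f(a))$ for $a\in A$ (note that $s(f(a))\in B$, a generator distinct from $a$). The assignment $b\mapsto b$, $a\mapsto a+s(f(a))$ defines a two-sided inverse on generators, so $\Psi$ is an automorphism of $T(S)$. A direct computation on generators gives $T(f)\comp\Psi(a)=f(a)-f(s(f(a)))=0$ for $a\in A$ and $T(f)\comp\Psi(b)=f(b)$ for $b\in B$; composing with the isomorphism $T(Q)\to T(B)$ induced by the bijection $B\to Q$, we recover precisely the ``Lazard quotient'' $g\colon T(A\cupdot B)\to T(B)$ that sends each $a\in A$ to $0$ and each $b\in B$ to $b$. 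As $\Psi$ is an automorphism and $T(Q)\cong T(B)$, this gives $K=\Ker(T(f))=\Psi\bigl(\Ker(g)\bigr)\cong\Ker(g)$.

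It remains to identify $\Ker(g)$, and this is exactly what Lazard's elimination theorem yields, once translated into the split short exact sequence displayed just above the present theorem: under the isomorphism $T(A\cupdot B)\cong T(M(B)\times A)\rtimes_{\phi}T(B)$ the morphism $g$ becomes the canonical projection onto $T(B)$, whose kernel is the free Lie algebra $T(M(B)\times A)$. Hence $K\cong T(M(B)\times A)$ is free, hence projective, condition~(iii) of Proposition~\ref{propo_equivalent_conditions} holds, and therefore $\LieK$ satisfies Condition~\P. The only genuinely non-formal ingredient is Lazard's theorem, which is invoked here as a black box; the real work in the argument above is the bookkeeping of the identifications $Q\cong B$ and $S=A\cupdot B$ together with the verification that $\Psi$ transforms $T(f)$ into the Lazard quotient $g$, which I expect to be the main — if essentially routine — obstacle.
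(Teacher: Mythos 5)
Your proof is correct, and it follows the paper's overall strategy: reduce via Proposition~\ref{propo_equivalent_conditions} to split epimorphisms of sets, then feed the resulting split epimorphism of free Lie algebras into Lazard's elimination theorem with generating sets built from $s(y)$ and $x-sf(x)$. Where you diverge is in how $T(f)$ is matched with the Lazard quotient $g$. The paper takes $A=\{x-sf(x)\}$ and $B=\{s(y)\}$ as \emph{subsets of $T(X)$} and only constructs a section/retraction pair $\phi$, $\psi$ between $T(X)$ and $T(A\cupdot B)$; passing to kernels, this exhibits $\Ker(T(f))$ merely as a retract of the free Lie algebra $T(M(B)\times A)$, hence projective. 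You instead partition the generating set itself, $S=A\cupdot B$ with $B=s(Q)$, and use the change-of-generators automorphism $\Psi$ of $T(S)$ (with explicit inverse $a\mapsto a+s(f(a))$, $b\mapsto b$) to transform $T(f)$, up to the isomorphism $T(Q)\cong T(B)$, \emph{exactly} into the Lazard projection; so your argument yields $\Ker(T(f))\cong T(M(B)\times A)$, i.e.\ the kernel is actually free, a strictly stronger conclusion than the paper's, obtained with less bookkeeping (no retract of short exact sequences, and no need to worry about collapsing among the elements $x-sf(x)$, which in the paper's version include $0$). Both arguments rest on the same non-formal ingredient, Theorem~\ref{thm : Lazard elimination}, used as a black box, together with the fact that its isomorphism identifies the kernel of the canonical quotient $g\colon T(A\cupdot B)\to T(B)$ with $T(M(B)\times A)$.
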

\begin{proof}
	Let $f\colon X \to Y$ be a split epimorphism in $\Ens$ and $s$ one of its sections. In order to use Lazard's elimination theorem, we define
	\[
		A\coloneq \{x-sf(x)\mid x\in X\}\subseteq T(X)
		\qquad\text{and}\qquad
		B\coloneq \{s(y)\mid y\in Y\}\subseteq T(X).
	\]
	In the diagram
	\[
		\xymatrix{
		T(X) \ar@<.5ex>[d]^-{\phi} \ar@{-{ >>}}[r]^-{T(f)} & T(Y) \ar@<.5ex>[d]^-{T(s|^B)} \\
		T(A\cupdot B) \ar@<.5ex>[u]^-{\psi} \ar@{-{ >>}}[r]_-{g} & T(B) \ar@<.5ex>[u]^-{T(f|_B)}
		}
	\]
	the morphism $g$ sends generators in $A$ to zero and generators in $B$ to themselves, while $\phi$ and $\psi$ are defined as
	\[
		\phi(x)=(x-sf(x))+sf(x)
	\]
	and
	\[
		\psi(x-sf(x))=x-sf(x),\qquad \psi(s(y))=s(y)
	\]
	for $x\in X$ and $y\in Y$. It represents a vertical, upward-pointing split epimorphism with downward-pointing section. Taking kernels, via Theorem~\ref{thm : Lazard elimination} we find a vertical, upward-pointing split epimorphism of short exact sequences as in
	\[
		\xymatrix{
		0 \ar[r] & \Ker(T(f)) \ar@{{ |>}->}[r] \ar@<.5ex>@{.>}[d] & T(X) \ar@<.5ex>[d]^-{\phi} \ar@{-{ >>}}[r]^-{T(f)} & T(Y) \ar@<.5ex>[d]^-{T(s|^B)} \ar[r] & 0\\
		0 \ar[r] & T(M(B)\times A) \ar@{{ |>}->}[r] \ar@<.5ex>@{.>}[u] & T(A\cupdot B) \ar@<.5ex>[u]^-{\psi} \ar@{-{ >>}}[r]_-{g} & T(B) \ar@<.5ex>[u]^-{T(f|_B)} \ar[r] & 0
		}
	\]
	This proves that $\Ker(T(f))$ is a projective object, as a retract of the free Lie algebra $T(M(B)\times A)$. The result now follows from Proposition~\ref{propo_equivalent_conditions}.
\end{proof}

\section{Chain complexes and homology}\label{Section-Chains-Homology-Resolutions}

We return to the main subject of our article and start working towards a non-additive approach to derived functors. For the sake of clarity, we recall some straightforward non-abelian versions of well-established basic definitions of (abelian) homological algebra---see, for instance,~\cite{VdLinden:Doc} or~\cite{JuliaThesis} for further details.

\subsection{Graded objects}
Let $\C$ be any category. A \defn{graded object} in $\C$ is a sequence of objects $(C_{n})_{n\in \ZZ}$ or, equivalently, a functor $C\colon{\ZZ\to \C}$ where the set of integers~$\ZZ$ is considered as a discrete category. Consider $k\in\ZZ$. A \defn{morphism $f\colon {A\to B}$ of degree~$k$} between graded objects~$A$ and~$B$ is a collection of morphisms $(f_{n}\colon A_{n}\to B_{n+k})_{n \in \ZZ}$ in $\C$. That is to say, $f$ is a natural transformation from $A$ to $B s^{k}$ where the map~$s^{k}\colon\ZZ\to \ZZ$ sends $n$ to $n+k$. Of course, a morphism $f\colon {A\to B}$ of degree $k$ composes with a morphism $g\colon {B\to C}$ of degree $l$ to a morphism $g f\colon{A\to C}$ of degree $k+l$.

\subsection{Chain complexes}\label{subsec : chain complexes}
Now let $\C$ be pointed, and let $0$ denote the constant graded object~$(0)_{n\in \ZZ}$. We shall also write $0$ for any morphism (of whichever degree) that factors over $0$.

A \defn{chain complex} $(C,d)$ in $\C$ is a graded object $C$ together with a morphism $d \colon{C\to C}$ of degree $-1$ (its \defn{differential}) such that~${d  d =0}$. So~$d$ is a sequence
\[
	\xymatrix{\cdots \ar[r] & C_{n+1} \ar[r]^-{d_{n+1}} & C_{n} \ar[r]^-{d_{n}} & C_{n-1} \ar[r] & \cdots}
\]
in which the composite of any two successive morphisms is trivial. A chain complex~$(C,d)$ is \defn{bounded below} when there exists $i\in \ZZ$ such that $C_{n}=0$ for all~${n< i}$, and \defn{positively graded} when this $i$ is $0$. Sometimes we omit the differential $d$ in our notation and write $C$ for a chain complex $(C,d)$.

A \defn{chain morphism} $f\colon{(C,d^{C})\to (E,d^{E})}$ from a chain complex $(C,d^{C})$ to a chain complex $(E,d^{E})$ is a morphism $f\colon{C\to E}$ of degree $0$ such that $d^{E} f=f d^{C}$. Chain complexes in $\C$ and chain morphisms between them form a category which we write~$\Ch (\C)$.

Any zero-preserving functor $F \colon{\C \to\E}$ between pointed categories $\C$ and $\E$ induces a functor $F \colon{\Ch(\C) \to \Ch(\E)}$ between the respective categories of chain complexes.

\subsection{Exact complexes, proper complexes and homology}\label{subsec - exact complexes, proper complexes and homology}
We assume that $\C$ is pointed with kernels and cokernels.
We say that a chain complex~$(C,d)$ is \defn{exact at $C_{n}$} or \defn{exact in degree $n$} when the lifting~$\bar{d}_{n+1}\colon {C_{n+1} \to \Z_n(C)}$ of $d_{n+1}$ over the kernel $\Z_n(C)$ of $d_{n}$ is a regular epimorphism.
\[
	\xymatrix@!0@=4em{\cdots \ar[r] & C_{n+1} \ar[rr]^-{d_{n+1}} \ar@{-->}[rd]_-{\bar{d}_{n+1}} & & C_{n} \ar[rr]^-{d_{n}} & & C_{n-1} \ar[r] & \cdots \\
	& & \Z_n(C) \ar@{{ |>}->}[ru]_-{\kker(d_n)} & & }
\]
The complex $C$ is \defn{exact} when it is exact in all degrees: the lifting $\bar{d}$ of $d$ over $\kker (d)\colon{\Z(d)\to C}$ is a regular epimorphism. In the context of a normal category, this is equivalent to saying that it consists of short exact sequences spliced together.

A chain complex~$(C,d)$ is \defn{proper} when $d$ admits a factorization as a regular epimorphism $e$ followed by a normal monomorphism $m$.
\[
	\xymatrix@!0@=4em{\cdots \ar[r] & C_{n+1} \ar[rr]^-{d_{n+1}} \ar@{-{>>}}[rd]_-{e_{n+1}} & & C_{n} \ar[rr]^-{d_{n}} & & C_{n-1} \ar[r] & \cdots \\
	& & \Ima(d_{n+1}) \ar@{{ |>}->}[ru]_-{m_{n+1}} & & }
\]
This means that each $d_{n+1}$ is a proper morphism. Clearly, every exact complex is proper.

Given a proper chain complex~$(C,d)$ in a normal category $\C$ and $n\in \ZZ$, the \defn{$n$th homology object $\H_{n}(C)$ of~$C$} is the cokernel of $\bar{d}_{n+1}\colon {C_{n+1} \to \Z_n(C)}$.
\begin{equation}\label{Definition of homology diagram}
	\vcenter{\xymatrix@!0@R=4em@C=4em{\cdots \ar[r] & C_{n+1} \ar[rr]^-{d_{n+1}} \ar[rd]_-{\bar{d}_{n+1}} & & C_{n} \ar[rd]|-{\bar{d}_{n}} \ar[rr]^-{d_{n}} & & C_{n-1} \ar[r] & \cdots \\
	& & \Z_n(C) \ar@{-{ >>}}[rd]_-{\coker(\bar{d}_{n+1})} \ar@{{ |>}->}[ru]|-{\kker(d_n)} & & \Z_{n-1}(C) \ar@{{ |>}->}[ru]_-{\kker(d_{n-1})} \ar@{-{ >>}}[rd]^-{\coker(\bar{d}_{n})} & \\
	& & & \H_n(C) \ar@{-->}[ru]  & & \H_{n-1}(C) & }}
\end{equation}
All homology objects together form a graded object $\H (C)=\Coker(\bar{d})$, in fact, a chain complex with zero differentials. For each $n\in \ZZ$, the operations $\Z_n$ and $\H_n$ define functors from $\Ch(\C)$ to $\C$ such that if $f \colon{C \to E}$ is chain morphism, then
\[
	\kker(d^E_n) \Z_n(f) = f_n \kker(d^C_n)
\]
and
\[
	\H_n(f) \coker(\bar{d}^C_{n+1})  = \coker(\bar{d}^E_{n+1}) \Z_n(f) .
\]

In a normal category, proper chain complexes are precisely those in which homology may detect exactness, so that a proper complex $C$ is exact if and only if~${\H (C)=0}$; see~\cite{VdLinden:Doc} for further details.

\section{A non-additive Horseshoe Lemma}\label{sec - non add Horseshoe lemma}

\subsection{Projective resolutions}
As recalled in Section~\ref{sec - derived functor ab case}, a particular type of chain complex which plays a central role in the construction of the additive left derived functors are the projective resolutions of an object. We first consider this notion in the context of a pointed category with kernels, then specialize to pointed regular categories and homological categories.

\begin{figure}
	\resizebox{\textwidth}{!}{
	\xymatrix@R=2em@C=2em{
	\cdots\ar[r] & C_{n+1}\ar[rr]^-{d_{n+1}}\ar@{-{>>}}[dr]_-{\bar{d}_{n+1}} & & C_n \ar[r]^-{d_{n}} & C_{n-1}\ar[r] & \cdots \ar[r] & C_1 \ar@{-{>>}}[dr]_-{\bar{d}_{1}} \ar[rr]^-{d_1} && C_0\ar@{-{>>}}[rd]^-{d_0} \ar[r] & 0 \\
	&  & \Ker(d_n) \ar@{{ |>}->}[ur]_-{\ker(d_n)} & & & & & \Ker(d_0) \ar@{{ |>}->}[ur]_-{\ker(d_0)} && X
	}}
	\caption{A projective resolution of $X$}\label{Figure Projective Resolution}
\end{figure}

\begin{definition}\label{def projective resolution}
	Consider an object $X$ in a pointed category with kernels. A \defn{(regular) projective resolution of $X$} consists of a positively graded chain complex $(C,d)$ as in Figure~\ref{Figure Projective Resolution}, all of whose objects $C_i$ are projective, together with a regular epimorphism $d_0\colon C_0\to X$ such that for each $n\geq 0$, the lifting $\bar{d}_{n+1}\colon {C_{n+1}\to \Ker(d_n)}$ of $d_{n+1}$ over the kernel $\Ker(d_n)$ of $d_n$ is a regular epimorphism.
\end{definition}

As in the abelian setting, the existence of projective resolutions is guaranteed when enough projective objects are available; see for instance~\cite{Cartan-Eilenberg} for a full proof. The idea is to take a projective cover $d_0\colon C_0\to X$ of $X$: the object $C_0$ is the first object of the resolution. We then take the kernel $\ker(d_0)\colon\Ker(d_0)\to C_0$ of~$d_0$, and cover this object with a projective object $C_1$. The induced composite is $d_1\colon C_1\to C_0$. The procedure continues by recursion, and we find:

\begin{proposition}\label{prop enough proje ensure proje resol}
	A pointed category with kernels has enough projectives if and only if each object admits a projective resolution.\noproof
\end{proposition}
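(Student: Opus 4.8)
The plan is to prove both implications of the stated equivalence by a straightforward recursive construction, treating the two directions separately.

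For the direction ``enough projectives $\To$ projective resolutions'', I would proceed by recursion on the degree $n$. First I fix an object $X$ and use the enough-projectives hypothesis to choose a projective object $C_0$ together with a regular epimorphism $d_0\colon C_0\twoheadrightarrow X$; this gives the bottom of the resolution. Assume inductively that the resolution has been built up to degree $n$, so we have projective objects $C_0,\dots,C_n$ and differentials $d_1,\dots,d_n$ with the required surjectivity of the liftings $\bar d_i$ for $i\leq n$. Form the kernel $\ker(d_n)\colon\Z_n(C)\to C_n$ (which exists since $\C$ has kernels), then apply the enough-projectives hypothesis to $\Z_n(C)$ to obtain a projective object $C_{n+1}$ with a regular epimorphism $\bar d_{n+1}\colon C_{n+1}\twoheadrightarrow\Z_n(C)$. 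Define $d_{n+1}\coloneqq \ker(d_n)\circ\bar d_{n+1}\colon C_{n+1}\to C_n$. Then $d_n\circ d_{n+1} = d_n\circ\ker(d_n)\circ\bar d_{n+1} = 0$ because $d_n\circ\ker(d_n)=0$, so the chain complex condition is preserved; and by construction the lifting of $d_{n+1}$ over $\ker(d_n)$ is exactly $\bar d_{n+1}$, which is a regular epimorphism. This closes the recursion and produces a projective resolution in the sense of Definition~\ref{def projective resolution}.

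For the converse, ``projective resolutions exist $\To$ enough projectives'', the argument is immediate: given any object $X$, a projective resolution of $X$ supplies in particular a projective object $C_0$ together with a regular epimorphism $d_0\colon C_0\twoheadrightarrow X$, which is precisely the statement that $\C$ has enough projectives.

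I do not expect a genuine obstacle here; the statement is essentially a formalisation of the standard recursive construction recalled just before the proposition, and the marker \verb|\noproof| in the statement already signals that the authors regard the proof as routine. The only points requiring a moment's care are verifying $d_{n+1}d_{n+2}=0$ at each stage (which follows formally from the factorisation through $\ker(d_n)$, since any morphism into $C_{n+1}$ composed with $d_{n+1}$ lands in the image of $\ker(d_n)$, hence is killed by $d_n$) and observing that the lifting $\bar d_{n+1}$ appearing in Definition~\ref{def projective resolution} genuinely coincides with the chosen regular epimorphism onto $\Z_n(C)$ — this holds because $\ker(d_n)$ is monic, so the lifting is unique.
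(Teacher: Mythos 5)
Your proposal is correct and follows exactly the recursive construction the paper itself sketches just before the proposition (projective cover of $X$, take the kernel, cover it, and repeat), with the converse direction being the same immediate observation that $d_0\colon C_0\twoheadrightarrow X$ already witnesses enough projectives. No issues.
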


For this reason, we will assume that our categories have enough projectives. Since this implies regularity of the category as soon as it has finite limits and coequalizers of kernel pairs, we will assume this condition as well.

\begin{remark}[Trivial resolution for a projective object]\label{rem - projective resol when X proj}
	If $X$ is projective, then it admits the chain complex $C$ where $C_0=X$ and $C_n=0$ for all $n\neq 0$ as a projective resolution.
\end{remark}

\begin{example}[Projective resolutions in varieties of algebras]
	We consider a pointed variety $\V$, and as in Proposition~\ref{propo_equivalent_conditions} denote the counit of the adjunction $T\dashv U$ by $\varepsilon$. Since its components are regular epimorphisms, any object admits a projective resolution.

	In the case of a pointed Schreier variety, we can find a ``short'' projective resolution which is trivial in degrees greater than $1$. Indeed, as mentioned in Example~\ref{ex_schreier variety}, in such a category, any projective object is a free object, and any subobject of a free object is again free. So if $X$ is an object and $d_0\colon C_0\to X$ a surjective algebra morphism where $C_0$ is free object, then its kernel $C_1$ is again free, and the construction stops: we may take all other $C_n$ equal to zero.
\end{example}

\begin{figure}
	%\resizebox{\textwidth}{!}
	{\(
	\xymatrix@C=3.5em{\cdots \ar[r] & C_1 \ar@{-->}[d]_-{f_1} \ar[r]^-{d^C_1} & C_0 \ar@{-->}[d]_-{f_0} \ar@{-{>>}}[r]^-{d_0^C} & X \ar[d]^-{x}\\
	\cdots \ar[r] & E_1 \ar[r]_-{d^E_1} & E_0 \ar@{-{>>}}[r]_-{d_0^E} & Y}
	\)}\caption{A lifting of a morphism $x$}\label{Chain morphism over f}
\end{figure}

In order to define the left derived functors of a functor, we first of all need that any morphism $x\colon X\to Y$ in the domain category induces a morphism of projective resolutions $f\colon {C(X)\to E(Y)}$ \defn{over $x$}, also called a \defn{lifting of $x$}, which means that $x\comp d_0^C=d_0^E\comp f_0$ as in Figure~\ref{Chain morphism over f}. This is possible thanks to the next result, of which the proof is straightforward, as in the abelian context---see for instance~\cite{MacLane:Homology}.

\begin{lemma}\label{Lemma Existence Lifting}
	In a pointed category with kernels, suppose projective resolutions $C(X)$ and $E(Y)$ of objects $X$ and~$Y$ are given. Then any morphism $x\colon {X\to Y}$ lifts to a morphism of projective resolutions $f\colon {C(X)\to E(Y)}$.\noproof
\end{lemma}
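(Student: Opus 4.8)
The plan is to construct the lifting $f\colon C(X)\to E(Y)$ by recursion on the degree, using at each step the defining property of projective objects together with the fact that both complexes are \emph{resolutions}, i.e.\ the relevant liftings $\bar{d}_n$ are regular epimorphisms. First I would deal with degree $0$: since $C_0$ is projective and $d_0^E\colon E_0\twoheadrightarrow Y$ is a regular epimorphism, the composite $x\comp d_0^C\colon C_0\to Y$ lifts through $d_0^E$, yielding $f_0\colon C_0\to E_0$ with $d_0^E\comp f_0=x\comp d_0^C$.

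Next comes the recursive step. Suppose $f_0,\dots,f_n$ have been constructed making all the relevant squares commute up to degree $n$ (with the convention that $\bar{d}_0^C$, $\bar{d}_0^E$ are the given projections onto $X$, $Y$ and $f_{-1}=x$). I would show that $f_n$ restricts to a morphism $\Z_n(C)\to\Z_n(E)$ between the kernels of $d_n^C$ and $d_n^E$: indeed $d_n^E\comp f_n\comp\kker(d_n^C)=f_{n-1}\comp d_n^C\comp\kker(d_n^C)=0$, so by the universal property of $\kker(d_n^E)$ there is a unique $g_n\colon\Z_n(C)\to\Z_n(E)$ with $\kker(d_n^E)\comp g_n=f_n\comp\kker(d_n^C)$. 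Now $C_{n+1}$ is projective and $\bar{d}_{n+1}^E\colon C_{n+1}^E\to\Z_n(E)$ is a regular epimorphism (this is exactly the resolution hypothesis on $E(Y)$), so the morphism $g_n\comp\bar{d}_{n+1}^C\colon C_{n+1}\to\Z_n(E)$ lifts through $\bar{d}_{n+1}^E$ to give $f_{n+1}\colon C_{n+1}\to C_{n+1}^E$ with $\bar{d}_{n+1}^E\comp f_{n+1}=g_n\comp\bar{d}_{n+1}^C$. Composing both sides with the monomorphism $\kker(d_n^E)$ then yields $d_{n+1}^E\comp f_{n+1}=f_n\comp d_{n+1}^C$, which closes the induction.

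Finally I would note that the collection $(f_n)_{n\geq 0}$ is by construction a chain morphism $C(X)\to E(Y)$ of degree $0$ commuting with the augmentations, hence a morphism of projective resolutions over $x$ in the sense of Figure~\ref{Chain morphism over f}. The only point requiring any care---and the one I would flag as the main (minor) obstacle---is bookkeeping the base case coherently with the recursion: one must treat the augmentation square on the same footing as the higher squares, which is cleanly done by setting $f_{-1}\defeq x$ and interpreting $\Z_{-1}(C)$, $\Z_{-1}(E)$ as $X$, $Y$ together with $\bar{d}_0^C=d_0^C$, $\bar{d}_0^E=d_0^E$; with this convention the degree-$0$ case is literally an instance of the recursive step. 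Everything else is a direct application of projectivity of the $C_n$ and of the regular-epimorphism hypotheses defining a resolution, exactly as in the classical abelian argument, so no additive structure is used anywhere.
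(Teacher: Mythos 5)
Your proof is correct and is precisely the classical recursive argument (projectivity of each $C_{n+1}$ against the regular epimorphisms $\bar{d}^E_{n+1}$, plus restriction to kernels) that the paper itself invokes by omitting the proof and referring to the abelian case in Mac\,Lane; it rightly uses no additive structure, only the lifting property and the universal property of kernels. The only blemishes are notational slips (writing $C_{n+1}^E$ for $E_{n+1}$), which do not affect the argument.
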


Note that an object $X$ may admit several non-isomorphic projective resolutions. However, thanks to the above, we know that between them, we can always find a morphism of projective resolutions over $1_X$. In the abelian case, such an $f$ happens to be a chain homotopy equivalence. In order to recover this result, we need an appropriate notion of \emph{chain homotopy}, valid in a non-additive context. This will be the subject of Section~\ref{sec - approx homotopy}.

In a normal category, the concept of a projective resolution considered above may be characterized in terms of homology objects as in the abelian case.

\begin{remark}\label{remark projective resolution}
	Consider an object $X$ in a normal category. A chain complex $C$
	\[
		\xymatrix{
		\cdots\ar[r] & C_{n+1}\ar[r]^-{d_{n+1}} & C_n \ar[r] & \cdots \ar[r] & C_1 \ar[r]^-{d_1} & C_0 \ar[r] & 0 \ar[r]& \cdots
		}
	\]
	is a projective resolution of $X$ if it is projective in all degrees, while $\H_0(C)=X$ and $\H_n(C)=0$ for all $n\neq 0$. Here the morphism $d_0\colon C_0\to X$ is nothing but $\coker(d_1)\colon C_0\to X=\H_0(C)$. This is the viewpoint of Figure~\ref{Figure Projective Resolution Intro}.
\end{remark}

We now restrict our attention to the context of a homological category, where a version of the Horseshoe Lemma is available.

\begin{figure}
	\(
	\xymatrix@C=3.5em{
	& & & & & 0\ar[d] & \\
	& & & & & X \ar@{{ |>}->}[d]^-{\alpha_{-1}}  & \\
	& & & & & Y \ar@{-{ >>}}[d]^-{\beta_{-1}} & \\
	\cdots \ar[r]& E_{n+1}\ar[r]_-{d^E_{n+1}}  & E_{n}\ar[r] & \cdots\ar[r]  &  E_0 \ar@{-{ >>}}[r]_-{\coker(d_1^E)} &  Z\ar[r] \ar[d] & 0\\
	& & & & & 0 &
	}
	\)
	\caption{The diagram to be filled with resolutions of $X$ and of $Y$}\label{fig - Setting for horseshoe}
\end{figure}

\subsection{The Horseshoe Lemma in homological categories}

The remainder of the present section is dedicated to obtaining a non-additive version of the so-called \emph{Horseshoe Lemma}, which is essential in the construction of the long exact sequence of derived functors (Section~\ref{sec - syzygy}, \cite{MacLane:Homology, Cartan-Eilenberg}). We will see that the availability of this result is related to Condition \P. Given a diagram such as the one in Figure~\ref{fig - Setting for horseshoe} where the vertical sequence is exact and the bottom exact sequence represents a projective resolution $E$ such that $\H_0(E)=Z$, we would like to find projective resolutions of $X$ and $Y$ and morphisms such as in Figure~\ref{fig - Filled horseshoe} where all vertical sequences are exact. In a homological category, this is indeed possible---as soon as Condition \P\ holds; note that since in this context, biproducts are not available, the classical abelian proof strategy cannot work here.

\begin{proposition}[Half Horseshoe Lemma in homological categories]\label{proposition:horseshoe lemma semiab}
	In a homological category with enough projectives which satisfies Condition \P, consider a diagram as in Figure \ref{fig - Setting for horseshoe} where the vertical sequence is exact and the horizontal sequence is a projective resolution of $Z$. Then there exist projective resolutions of $X$ and~$Y$, respectively, which complete the diagram in Figure \ref{fig - Filled horseshoe} depicting a vertical short exact sequence of horizontal exact sequences.
\end{proposition}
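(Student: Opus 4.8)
The plan is to mimic the classical abelian construction, but to replace the biproduct $E_n \oplus F_n$ (which is what one would use to build a resolution of the middle object) by the coproduct $E_n + F_n$, and to use Condition~\P\ at each step to guarantee that the kernels that arise are again projective. First I would set up the recursion. Write $\alpha_{-1}\colon X\rightarrowtail Y$ for the normal monomorphism and $\beta_{-1}\colon Y\twoheadrightarrow Z$ for the regular (hence normal) epimorphism from Figure~\ref{fig - Setting for horseshoe}. Choose a projective resolution $F$ of $X$ (possible by enough projectives); I also have the given resolution $E$ of $Z$. The goal is to build a resolution $D$ of $Y$ together with chain maps $F\to D\to E$ forming a short exact sequence of complexes in each degree, lying over $\alpha_{-1}$ and $\beta_{-1}$.

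Next I would carry out the base step in degree $0$. Set $D_0 \defeq F_0 + E_0$, which is projective by closedness of projectives under coproducts. The map $D_0\to Y$ is built from $\alpha_{-1}\comp d_0^F\colon F_0\to Y$ together with a lifting $E_0\to Y$ of $d_0^E\colon E_0\to Z$ through the regular epi $\beta_{-1}$ (such a lifting exists because $E_0$ is projective); by the copairing universal property these combine to $d_0^D\colon D_0\to Y$, and one checks it is a regular epimorphism --- here I would use that $\alpha_{-1}$ and the chosen lift jointly cover $Y$, which follows from a short diagram chase using that $\beta_{-1}$ is the cokernel of $\alpha_{-1}$ and that $d_0^E$ is a regular epi (in a regular category, the relevant image is all of $Y$). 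The coproduct injections $\iota_1\colon F_0\to D_0$ and $\iota_2$ composed with the structure maps give the chain-map components, and in degree $0$ the sequence $0\to F_0\to D_0\to E_0\to 0$ is split short exact by construction. The inductive step is the same idea one degree up: given the resolutions constructed up to degree $n$ and the short exact sequence of kernels $0\to \Ker(d_n^F)\to \Ker(d_n^D)\to \Ker(d_n^E)\to 0$ --- which I obtain by applying the snake/$3\times 3$ lemma in the homological category $\C$ to the already-constructed short exact sequence of the degree-$n$ data --- I set $D_{n+1}\defeq F_{n+1}+E_{n+1}$ and define $\bar d_{n+1}^D\colon D_{n+1}\to\Ker(d_n^D)$ by copairing $F_{n+1}\to\Ker(d_n^F)\hookrightarrow\Ker(d_n^D)$ with a projective lifting $E_{n+1}\to\Ker(d_n^D)$ of $\bar d_{n+1}^E$; again this is a regular epi by the same joint-cover argument, so $D$ is exact in degree $n+1$, and degreewise the complex sequence stays split exact.

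The step I expect to be the main obstacle is verifying that the constructed lift $d_0^D$ (and each $\bar d_{n+1}^D$) is genuinely a \emph{regular epimorphism}, not merely an epimorphism: in the non-additive setting one cannot simply invoke the splitting of a biproduct. The clean way is to use protomodularity/normality of $\C$ --- the outer maps in a split (or merely short) exact sequence are jointly extremally epimorphic, and combined with regularity (pullback-stability of regular epis) this upgrades "jointly cover" to "regular epi". This is exactly where the homological hypothesis, rather than mere pointed regularity, is used, and where Condition~\P\ enters to keep the kernels $\Ker(d_n^D)$ (and $\Ker(d_n^F)$, $\Ker(d_n^E)$) projective so that the liftings needed at the next stage exist. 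I would also remark that the naturality statement needed later (for the long exact sequence) is not claimed here, so I can ignore functoriality and just produce the diagram of Figure~\ref{fig - Filled horseshoe}.
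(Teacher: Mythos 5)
Your construction breaks down at the step you dismiss as holding ``by construction'': in a non-additive homological category the sequence $0\to F_n\to F_n+E_n\to E_n\to 0$, with $\iota_1$ as the first map and $\couniv{0}{1_{E_n}}$ as the second, is \emph{not} short exact. The coproduct injection $\iota_1$ is not the kernel of $\couniv{0}{1_{E_n}}$ — in $\Gp$, for instance, that kernel is the normal closure of $F_n$ in the free product, a free product of conjugates of $F_n$, strictly larger than $F_n$. So with $A_n\coloneq F_n+E_n$ the vertical sequences required by Figure~\ref{fig - Filled horseshoe} are not exact, and consequently the $3\times3$/snake step you invoke to get the short exact sequence of kernels $0\to\Ker(d_n^F)\to\Ker(d_n^D)\to\Ker(d_n^E)\to 0$ has no valid input. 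This is exactly the obstruction the paper points out when it says the classical abelian strategy (biproducts of the given resolutions) cannot work here; the problem is not, as you anticipate, upgrading ``jointly cover'' to a regular epimorphism (that part is fine via the Short Five Lemma), but the exactness of the columns themselves. A related symptom: in your plan Condition \P\ does no real work — coproducts of projectives are projective anyway, and liftings only need a projective source over a regular epi — whereas the paper shows \P\ is actually \emph{equivalent} to the Horseshoe Lemma, so any correct proof must use it in an essential way.

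The paper's proof takes a genuinely different route, and also proves only the ``half'' statement: the resolution of $X$ is \emph{constructed}, not prescribed in advance as your resolution $F$ is. One forms the pullback $P=Y\times_Z E_0$ of $\beta_{-1}$ along $\coker(d_1^E)$, covers it by a projective $P_0\twoheadrightarrow P$, and sets $A_0\coloneq P_0$; the composite $\beta_0\colon P_0\to E_0$ is a \emph{split} epimorphism because $E_0$ is projective, and Condition \P\ is applied precisely to the split short exact sequence $0\to K_{\beta_0}\to P_0\to E_0\to 0$ to conclude that $C_0\coloneq K_{\beta_0}$ (which maps onto $X$ by a pullback argument) is projective. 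The recursion then applies the $3\times3$ Lemma to the genuinely exact columns so produced and repeats the base step on the kernels. If you want to salvage your outline, you would have to replace $F_n+E_n$ by such projective covers of pullbacks and let the resolution of $X$ fall out as the kernels of the resulting split epimorphisms onto the $E_n$.
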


We say ``half'' because unlike the abelian case we only start with an arbitrary resolution of $Z$, not of $Z$ and $X$, as one does in the abelian case.

\begin{figure}[t]
	\(
	\xymatrix@C=3.5em{
	& 0 \ar[d] & 0 \ar[d] & & 0 \ar[d] & 0 \ar[d] & \\
	\cdots \ar[r] & C_{n+1} \ar[r]^ -{d_{n+1}^C} \ar@{{ |>}->}[d]_ -{\alpha_{n+1}}  & C_n \ar[r] \ar@{{ |>}->}[d]^ -{\alpha_{n}}  & \cdots \ar[r] & C_0 \ar@{-{ >>}}[r]^-{\coker(d_1^C)} \ar@{{ |>}->}[d]_ -{\alpha_{0}}  & X \ar@{{ |>}->}[d]^ -{\alpha_{-1}} \ar[r] & 0\\
	\cdots \ar[r] & A_{n+1}\ar[r]^ -{d_{n+1}^A} \ar@{-{ >>}}[d]_-{\beta_{n+1}} & A_n\ar[r] \ar@{-{ >>}}[d]^ -{\beta_{n}} & \cdots \ar[r] & A_0 \ar@{-{ >>}}[r]^ - {\coker(d_1^A)} \ar@{-{ >>}}[d]_ -{\beta_{0}} & Y \ar@{-{ >>}}[d]^ -{\beta_{-1}} \ar[r] & 0 \\
	\cdots\ar[r] & E_{n+1}\ar[r]_-{d^E_{n+1}} \ar[d] & E_{n}\ar[r] \ar[d] & \cdots\ar[r] & E_0 \ar@{-{ >>}}[r]_ -{\coker(d_1^E)} \ar[d] &  Z \ar[r]\ar[d] & 0\\
	& 0 & 0 & & 0 & 0 &
	}
	\)
	\caption{The diagram of Figure~\ref{fig - Setting for horseshoe} filled}\label{fig - Filled horseshoe}
\end{figure}
\begin{figure}[t]
	\(
	\xymatrix@C=3.5em{
	0\ar[d]& 0\ar[d] & 0 \ar[d] & \\
	K_{\beta_0} \ar@{{ |>}->}[d]_-{\kker (\beta_0)}  \ar@{-->}[r]^-{\overline{\varepsilon_0}}&K_{p_0}\ar[r]^-{\cong} \ar@{{ |>}->}[d]_-{\kker (p_0)} & X\ar[r] \ar@{{ |>}->}[d]^ -{\alpha_{-1}} & 0\\
	P_0\ar@{ >>}[r]^-{\varepsilon_0} \ar@{-{ >>}}@<.5ex>[d]^-{\beta_0} &P \halfsplitpullback \ar@{ >>}[r]^-{p_1} \ar@{-{ >>}}@<.5ex>[d]^-{p_0}   & Y \ar[r] \ar@{-{ >>}}[d]^ -{\beta_{-1}} & 0\\
	E_0\ar[d] \ar@{{ >}->}@<.5ex>[u]^-{i_0} \ar@{=}[r] &E_0 \ar@{{ >}->}@<.5ex>[u]^-{i} \ar[d] \ar@{-{ >>}}[r]_-{\coker(d_1^E)} & Z\ar[r] \ar[d] & 0 \\
	0&0 & 0 &
	}
	\)
	\caption{Take the pullback $P$ and a projective cover of it}\label{fig - Starting diagram}
\end{figure}
\begin{proof}
	The proof is by recursion on $n\in \NN$. Let us start with the first step, $n=0$.

	We consider Figure~\ref{fig - Starting diagram} where the bottom square is a pullback. Being pullbacks of normal epimorphisms, $p_0$ and $p_1$ are normal epimorphisms as well. The morphism between the kernels $K_{p_0}$ and $X$ is an isomorphism by~\cite[Lemma 1]{Bourn2001}. Notice that the normal epimorphism $p_0$ is split, by some section $i$, since $E_0$ is a projective object.	Since by assumption the category has enough projectives, $P$ is the quotient, via a normal epimorphism $\varepsilon_0$, of a projective object $P_0$. The composite $\beta_0=p_0\comp \varepsilon_0$, as a normal epimorphism with codomain $E_0$, is a split epimorphism with a section we denote $i_0$. Taking the kernel of this split epimorphism $\beta_0$ will show us that $X$ is a quotient of a projective object, namely $K_{\beta_0}$ via $\overline{\varepsilon_0}$.

	Since the vertical sequence on the left is a split short exact sequence where $P_0$ is a projective object, Condition \P\ implies that the kernel object $K_{\beta_0}$ is a projective object as well. Finally, since the top left square is a pullback, $\varepsilon_0$ being a normal epimorphism implies that the induced morphism $\overline{\varepsilon_0}$ is a normal epimorphism as well. If we set $C_0=K_{\beta_0}$, then $X$ is a quotient of $K_{\beta_0}$ via $\eta_0^C=\overline{\varepsilon_0}$ and if we set $A_0=P_0$, then $Y$ is a quotient of $P_0$ via $\eta_0^A=p_1\varepsilon_0$.

	\begin{figure}[t]
		\(
		\xymatrix@C=4em{
		&  & 0\ar[d]& & 0\ar[d] &  \\
		0\ar[r] & K_{\eta_n^C}\ar@{-->}[d]_-{\overline{\alpha_n}} \ar@{{ |>}->}[r]_-{\kker (\eta_n^C)}   & C_n \ar@/^1pc/[rr]^-{d^C_n} \ar@{ >>}[r]_-{\eta_n^C}  \ar@{{ |>}->}[d]_-{\alpha_n}  & K_{\eta_{n-1}^C}\ar@{{ |>}->}[r]_-{\kker (\eta^C_{n-1})}   & C_{n-1} \ar@{{ |>}->}[d]_-{\alpha_{n-1}}  \ar[r] & \cdots  \\
		0\ar[r] & K_{\eta_n^A} \ar@{-->}[d]_-{\overline{\beta_n}} \ar@{{ |>}->}[r]_-{\kker (\eta_n^A)}   & A_n \ar@{-{ >>}}[d]_ -{\beta_{n}} \ar@{ >>}[r]_-{\eta_n^A} \ar@/^1pc/[rr]^-{d^A_n}  & K_{\eta_{n-1}^A} \ar@{{ |>}->}[r]_-{\kker (\eta^A_{n-1})}  & A_{n-1} \ar@{-{ >>}}[d]^ -{\beta_{n-1}} \ar[r] & \cdots  \\
		0\ar[r] & K_{\eta_n^E} \ar@{{ |>}->}[r]_-{\kker (\eta_n^E)}   & E_n \ar@{ >>}[r]_-{\eta_n^E} \ar@/^1pc/[rr]^-{d^E_n} \ar[d]  & K_{\eta_{n-1}^E} \ar@{{ |>}->}[r]_-{\kker (\eta^E_{n-1})}  & E_{n-1}\ar[d]\ar[r] & \cdots  \\
		&  & 0 & & 0 &
		}
		\)
		\caption{Construction of $d_n^C$ and $d_n^D$}\label{Partial_n^C and partial_n^D}
	\end{figure}
	\begin{figure}[b]
		\(
		\xymatrix@C=3.5em{
		&  & 0\ar[d]&  0\ar[d] &  \\
		0\ar[r] & K_{\eta_n^C}\ar@{-->}[d]_-{\overline{\alpha_n}} \ar@{{ |>}->}[r]_-{\kker (\eta_n^C)}   & C_n  \ar@{ >>}[r]_-{\eta_n^C}  \ar@{{ |>}->}[d]_-{\alpha_n}  & K_{\eta_{n-1}^C} \ar@{{ |>}->}[d]_-{\overline{\alpha_{n-1}}} \ar[r] & 0  \\
		0\ar[r] & K_{\eta_n^A} \ar@{-->}[d]_-{\overline{\beta_n}} \ar@{{ |>}->}[r]_-{\kker (\eta_n^A)}   & A_n \ar@{-{ >>}}[d]_ -{\beta_{n}} \ar@{ >>}[r]_-{\eta_n^A}  & K_{\eta_{n-1}^A} \ar@{-{ >>}}[d]_ -{\overline{\beta_{n-1}}}   \ar[r] & 0  \\
		0\ar[r] & K_{\eta_n^E} \ar@{{ |>}->}[r]_-{\kker (\eta_n^E)}   & E_n \ar@{ >>}[r]_-{\eta_n^E}  \ar[d]  & K_{\eta_{n-1}^E} \ar[d] \ar[r] & 0  \\
		&  & 0 &  0 &
		}
		\)
		\caption{Part of Figure~\ref{Partial_n^C and partial_n^D} on which the $(3\times 3)$-Lemma is applied}\label{Figure recursion step}
	\end{figure}
	For the recursion step, let us consider Figure~\ref{Partial_n^C and partial_n^D} where we suppose that partial projective resolutions $X$ and $Y$ are defined respectively until the projective objects $C_n$ and $A_n$. On the one hand, notice that we defined $d_n^C= \kker ( \eta_{n-1}^C) \eta_n^C $ where $\eta_n^C$ is a normal epimorphism by assumption. We defined $d_n^A$ in an analogous way. On the other hand, we have $d_n^E=\kker (\eta_{n-1}^E) \eta_n ^ E$ by exactness of the projective resolution $E$ of $Z$ at the level $n-1$. Hence, the sequences are projective resolutions up to level $n-1$.

	In Figure~\ref{Figure recursion step} shows a $(3\times 3)$-diagram where we have three horizontal short exact sequences (by definition) and two vertical short exact sequences on the right (by assumption). By the $(3\times 3)$-Lemma \cite[Theorem 12]{Bourn2001}, the vertical sequence with the dashed arrow is a short exact sequence as well. We return to the situation of the the base step, with $K_{\eta_n^C}$, $K_{\eta_n^A}$ and $K_{\eta_n^E}$ playing, respectively, the roles of $X$, $Y$ and $Z$, and the normal epimorphism $\eta_{n+1}^E \colon E_{n+1}\to K_{\eta_{n}^E}$ playing the role of $\coker(d_1^E)$.
\end{proof}

\begin{remark}[Degreewise split short exact sequence]
	Like in the abelian context, the previous proof leads us to a short exact sequence of chain complexes
	\begin{equation}\label{Horseshoe SES}
		\xymatrix{0 \ar[r] & C(X) \ar@{{ |>}->}[r] & A(Y) \ar@{-{ >>}}[r] & E(Z) \ar[r]  & 0}
	\end{equation}
	where for each $n\geq 0$ we have a split short exact sequence
	$$\xymatrix{0 \ar[r] & C_n \ar@{{ |>}->}[r] & A_n \ar@{-{ >>}}@<.5ex>[r] & E_n \ar[r] \ar@{{ >}->}@<.5ex>[l] & 0.}$$
	However, in the current context, $A_n$ need not be a product of $C_n$ and $E_n$.
\end{remark}

\begin{remark}\label{Remark Horseshoe Lemma SSES}
	Suppose the vertical sequence in Figure~\ref{fig - Setting for horseshoe} is \emph{split} exact, and that a splitting $\gamma_{-1}\colon Z\to Y$ is given. Then the morphism $i$ in Figure~\ref{fig - Starting diagram} can be chosen to be induced by $\gamma_{-1}$ in such a way that $\gamma_{-1}\coker(d_1^E)=p_1i$. Now $i_0$ is any lifting of $i$ over $\varepsilon_0$. Repeating this procedure, we see that the sequence~\eqref{Horseshoe SES} is split exact. In other words, the procedure of Proposition~\ref{proposition:horseshoe lemma semiab}, applied to a split short exact sequence in $\C$, yields a split short exact sequence of chain complexes.
\end{remark}

Condition \P\ plays a key role in the previous proof. Actually, it is not only sufficient, but also necessary:

\begin{proposition}
	If, in a homological category with enough projectives $\C$, whenever we consider the diagram of Figure~\ref{fig - Setting for horseshoe} as in Proposition~\ref{proposition:horseshoe lemma semiab}, it can be completed into a short exact sequence of resolutions as in the conclusion of that result, then Condition \P\ holds for $\C$.
\end{proposition}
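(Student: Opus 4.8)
The plan is to feed the split short exact sequence arising from a protosplit subobject into the assumed Horseshoe completion, choosing the \emph{trivial} resolution for the bottom row, and then to read off a splitting of the augmentation of the resulting resolution of the kernel. So suppose $K\leq X$ is a protosplit subobject, represented by a split short exact sequence $0\to K\xrightarrow{k} X\xrightarrow{f} Y\to 0$ with section $s\colon Y\to X$ of $f$ and with $X$ projective; we must show that $K$ is projective. First I would observe that $Y$, being a retract of $X$ via $s$, is itself projective, so by Remark~\ref{rem - projective resol when X proj} it admits the trivial projective resolution $E$ with $E_0=Y$, $\coker(d_1^E)=1_Y$ and $E_n=0$ for $n\neq 0$. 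Plugging this into the shape of Figure~\ref{fig - Setting for horseshoe}---with the objects $X$, $Y$, $Z$ of that figure taken to be $K$, $X$, $Y$, and $\alpha_{-1}$, $\beta_{-1}$ taken to be $k$, $f$---and invoking the hypothesis, we obtain projective resolutions $(C,d^C)$ of $K$ and $(A,d^A)$ of $X$ completing Figure~\ref{fig - Filled horseshoe}. Reading off degree $0$ we get a short exact sequence $0\to C_0\xrightarrow{\alpha_0} A_0\xrightarrow{\beta_0} Y\to 0$ together with, from the two commutative squares in its last two columns, the identities $\coker(d_1^A)\,\alpha_0=k\,\coker(d_1^C)$ and $\beta_0=f\,\coker(d_1^A)$, where for the latter we use $\coker(d_1^E)=1_Y$.

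The rest is a short diagram chase. As $X$ is projective and the augmentation $\coker(d_1^A)\colon A_0\to X$ is a regular epimorphism, it has a section $t\colon X\to A_0$. Then $\beta_0\,t\,k=f\,\coker(d_1^A)\,t\,k=f\,k=0$, so, since $\alpha_0=\kker(\beta_0)$, the morphism $t\,k$ factors as $t\,k=\alpha_0 u$ for a unique $u\colon K\to C_0$. Hence $k\,\coker(d_1^C)\,u=\coker(d_1^A)\,\alpha_0\,u=\coker(d_1^A)\,t\,k=k$, and, $k$ being a monomorphism, $\coker(d_1^C)\,u=1_K$. Thus $u$ is a section of the augmentation $C_0\to K$, so $K$ is a retract of the projective object $C_0$ and is therefore projective. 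This proves Condition \P\ for $\C$.

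I do not anticipate a real obstacle; the single point requiring care is the \emph{choice} of input for the Horseshoe hypothesis. It is crucial to use the trivial resolution of $Y$, so that $\coker(d_1^E)=1_Y$ and hence $\beta_0$ is precisely $f$ post-composed with the augmentation of $A$; this is available exactly because protosplitness makes $Y$ a retract of the projective object $X$. It is also the reason the argument does not yield the statement---false in general---that the kernel of an arbitrary, not necessarily split, regular epimorphism out of a projective object is projective: in that situation $Y$ need not be projective, the trivial resolution is unavailable, and the chase collapses.
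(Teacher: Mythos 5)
Your proof is correct, but it takes a genuinely different route from the paper's. The paper starts from an \emph{arbitrary} projective resolution of $Y$, notes that the degree-zero augmentations $\coker(d_1^A)$ and $\coker(d_1^E)$ split because $X$ and $Y$ are projective, and then re-runs the regular-pushout argument from the proof of (iii) $\To$ (i) in Proposition~\ref{propo_equivalent_conditions} (using that a homological category is regular Mal'tsev) to choose compatible sections that restrict to a section of $\coker(d_1^C)$, exhibiting $K$ as a retract of $C_0$. You instead exploit the freedom in the hypothesis by feeding in the \emph{trivial} resolution of the projective object $Y$ (Remark~\ref{rem - projective resol when X proj}), which forces $\coker(d_1^E)=1_Y$ and hence $\beta_0=f\comp\coker(d_1^A)$; after that, a section $t$ of $\coker(d_1^A)$ (projectivity of $X$) together with $\alpha_0=\kker(\beta_0)$ gives the factorization $tk=\alpha_0 u$ and the chase $k\comp\coker(d_1^C)\comp u=k$ yields $\coker(d_1^C)\comp u=1_K$, so $K$ is a retract of $C_0$. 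Your version buys simplicity: it avoids the Mal'tsev/regular-pushout machinery altogether and only uses kernels, monicity of $k$ and closure of projectives under retracts, so it would work verbatim in any normal category with enough projectives; the paper's version buys uniformity, since it shows the retraction can be extracted from the Horseshoe completion over \emph{any} resolution of $Y$ and runs in parallel with the earlier varietal characterization. Your closing remark about why the argument does not prove projectivity of kernels of arbitrary regular epimorphisms is also accurate: without the splitting, $Y$ need not be projective and the trivial resolution (and the section $t$ interacting with $f$ as you use it) is unavailable.
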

\begin{proof}
	Consider a split short exact sequence
	\[
		\xymatrix{0 \ar[r] & K \ar@{{ |>}->}[r]^-k & X \ar@{-{ >>}}@<.5ex>[r]^-{f} & Y \ar[r] \ar@{{ >}->}@<.5ex>[l]^-{s} & 0}
	\]
	where $X$ is a projective object. Then $Y$ is a projective object as well. We prove that $K$ is also projective.

	Since $\C$ has enough projectives, by Proposition~\ref{prop enough proje ensure proje resol} we may consider a projective resolution of $Y$. By assumption we can construct suitable projective resolutions of $K$ and $X$; in particular, we obtain the diagram
	\[
		\xymatrix{
		0 \ar[r] & C_0 \ar@{-{ >>}}[d]_ - {\coker(d_1^C)}  \ar@{{ |>}->}[r]^-{\alpha_0}  & A_0 \ar@{-{ >>}}[d]|-{\coker(d_1^A)}  \ar@<.5ex>@{-{ >>}}[r]^-{\beta_0} & E_0 \ar@{{ >}->}@<.5ex>[l]^-{j} \ar@{-{ >>}}[d]^ - {\coker(d_1^E)} \ar[r] & 0 \\
		0 \ar[r] & K \ar@{{ |>}->}[r]^-k & X \ar@{-{ >>}}@<.5ex>[r]^-{f} & Y \ar[r] \ar@{{ >}->}@<.5ex>[l]^-{s} & 0}
	\]
	whose horizontal sequences are short exact. Since $X$ and $Y$ are projective objects, the normal epimorphisms $\coker(d_1^A)$ and $\coker(d_1^E)$ are split. As a result, and since a homological category is always regular Mal'tsev~\cite{Borceux-Bourn}, we are in a similar situation as in the proof for (ii) $\To$ (i) of Proposition~\ref{propo_equivalent_conditions}. We may thus choose suitable sections of $\coker(d_1^A)$ and $\coker(d_1^E)$ which restrict to a section of $\coker(d_1^C)$. This shows that $K$ is a retract of the projective object $C_0$. Hence $K$ is projective as well, and \P\ holds in the category $\C$.
\end{proof}

\section{Approximate differences}\label{sec - approximate difference}

As mentioned in Subsection~\ref{sec - explanation non additive context}, the non-additive context does not allow us to properly define an addition between parallel arrows---certainly not one which forms a group structure. However, we can define a notion of \emph{approximate difference}, which may still be available in a non-additive context. This section explains what we need to make this work: it is inspired by the theory of so-called \emph{approximate subtractions}~\cite{DB-ZJ-2009b,DB-ZJ-2009}. Originally developed in the context of \emph{subtractive categories}~\cite{ZJanelidze-Subtractive, ZJanelidze-Snake}, this theory allows the use of (in some suitable sense \emph{imaginary} or \emph{approximate}) differences even when the hom-sets have no abelian group structure on them.

Observe that the difference of two morphisms $f$, $g\colon{X\to Y}$ in an abelian category may be rewritten as
\begin{equation}\label{Additive}
	\vcenter{\xymatrix@!0@R=4em@C=6em{0 \ar[r] & X \ar@{{ |>}->}[r]^-{\univ{{1_X}}{{-1_X}}} \ar[rd]_-{f-g} & X\oplus X \ar@{-{ >>}}@<.5ex>[r]^-{\nabla_X} \ar[d]^-{\couniv{f}{g}} & X \ar@{{ >}->}@<.5ex>[l]^-{\iota_1} \ar[r] & 0\\
	&& Y}}
\end{equation}
where the horizontal sequence is exact and where $f-g = \couniv{f}{g} \univ{ 1_X}{ -1_X }$ (see for instance \cite{Freyd} for further explanation). We can mimic this procedure in the context of a pointed category: given any object $X$ for which the needed (co)limits exist, we set $(D(X),\delta_{X})$ be the kernel of $\nabla_X=\couniv{1_{X}}{1_{X}}\colon{X+X\to X}$ and call it the \defn{difference object} of $X$.
\begin{equation*}\label{Additive_non_ab}
	\vcenter{\xymatrix@!0@R=4em@C=6em{D(X) \ar@{{ |>}->}[r]^-{\delta_X} \ar[rd]_-{f-g} & X+ X \ar@<.5ex>[r]^-{\nabla_X} \ar[d]^-{\couniv{f}{g}} & X \ar@{{ >}->}@<.5ex>[l]^-{\iota_1} \\
	& Y}}
\end{equation*}
If now $f$, $g\colon{X\to Y}$, then we view the composite $\couniv{f}{g}\comp \delta_X$ as a kind of \emph{approximate difference} of $f$ and $g$, and write it $f-g\colon D(X)\to Y$. Note that even though this definition needs very little from the category where it is made, whether it is useful depends strongly on the category's properties. For our purposes, it will be convenient to work in a homological category with coproducts, which encompasses the subtractive setting of~\cite{DB-ZJ-2009}.

\begin{example}[Groups] \label{ex - D(Grp)}
	For any group $X$, the kernel $D(X)$ is a subgroup of the free group $X+X$. We can prove that it is generated by elements of the shape
	\[
		\overline{x}\underline{x}^{-1} \quad\text{and}\quad \underline{y}\overline{y}^{-1}
	\]
	for $x$, $y\in X$ and where we write $\overline{(\cdot)}$ and $\underline{(\cdot)}$ for the elements which belong, respectively, to the first or second copy of $X$ inside the coproduct $X+X$.
\end{example}

\begin{example}[Lie algebras]\label{ex - D(LieK)}
	In the category $\LieK$, it is easy to see that for two algebras $X$ and $Y$, the coproduct $X+Y$ is the set of (equivalence classes, with respect to anticommutativity and the Jacobi identity, as well as the relations that hold in $X$ and in $Y$) of all (non-commutative, non-associative) polynomials with variables in $X$ and $Y$ (see for instance~\cite[Section 10]{VdL:nonassociativealgebra} for a more detailed explanation).

	The elements of $D(X)$ ``are'' polynomials generated by monomials of three distinct types: a first one, coming from the vector space structure, where we have $\overline{z}-\underline{z}$ and $\underline{z}-\overline{z}$ for $z$ any (non-associative) word with letters in $X$; a second, coming from the anticommutativity of the Lie bracket, where we have $\overline{z}\underline{z}$ and $\underline{z}\overline{z}$; and a third type induced by the Jacobi identity, where we find monomials of the shape $z_1(z_2 z_3)+z_2(z_3 z_1)+z_3(z_1 z_2)$, for words with letters in $X$---here we need to  exclude the situation where the $z_i$ belong to the same copy of $X$.
\end{example}

\subsection{Calculus of subtraction}\label{sec - calculus of subtraction}
For now, we assume that $\C$ is a pointed category with functorially chosen kernels and binary coproducts. This allows us to define the \defn{difference object functor} $D\colon \C\to \C$. Given a morphism $h\colon {X \to Y}$, we let~$D(h)$ be the unique induced arrow in the diagram
\begin{equation}\label{eq - D(h)}
	\vcenter{\xymatrix@!0@=5em{D(X) \ar@{{ |>}->}[r]^-{\delta_{X}} \ar@{-->}[d]_-{D(h)} & X+X \ar@<.5ex>[r]^-{\nabla_X} \ar[d]_-{h+h} & X \ar[d]^-{h} \ar@{{ >}->}@<.5ex>[l]^-{\iota_1}\\
	D(Y) \ar@{{ |>}->}[r]_-{\delta_{Y}} & Y+Y \ar@<.5ex>[r]^-{\nabla_Y} & Y \ar@{{ >}->}@<.5ex>[l]^-{\iota_1}}}
\end{equation}
We will also need the natural transformation $\varsigma\colon D \To 1_{\C}$ defined for $X$ in $\C$ by
\[
	\varsigma_X \defeq \couniv{1_{X}}{0} \delta_{X}\colon D(X)\to X.
\]
To see that $\varsigma$ is indeed natural, so that $f\varsigma_X=\varsigma_Y D(f)$ for any arrow $f\colon X\to Y$, it suffices to take a look at the diagram
\[
	\xymatrix{
	D(X)\ar@{{ |>}->}[r]^-{\delta_{X}} \ar[dd]_-{D(f)} \ar[rrd] _-(0.3){\varsigma_X} & X+X\ar[rr]^-(0.75){\couniv{1_X}{1_X}}\ar[dd]|(0.28)\hole_-{f+f}\ar[rd]|-{\couniv{1_X}{0}} & & X\ar[dd]^-{f}\\
	& & X\ar[dd]_-(0.3){f} & \\
	D(Y)\ar@{{ |>}->}[r]^-{\delta_{Y}} \ar[rrd]_-(0.3){\varsigma_Y} & Y+Y\ar[rr]|(0.55)\hole_-(0.75){\couniv{1_Y}{1_Y}} \ar[rd]|-{\couniv{1_Y}{0}} && Y \\
	& & Y &
	}
\]

\begin{lemma}\label{lemma : D preserves 0}
	Under the above assumptions on $\C$, the functor $D\colon \C\to \C$ preserves the zero morphism.\noproof
\end{lemma}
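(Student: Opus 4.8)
The plan is to read off the statement directly from the defining square \eqref{eq - D(h)}, using that $\delta_Y$ is a monomorphism. Recall that $\delta_Y\colon D(Y)\to Y+Y$ is by definition a kernel of $\nabla_Y$, hence monic. Given objects $X$, $Y$ of $\C$, write $0$ for the zero morphism $X\to Y$. The first (and only genuinely ``computational'') point is that the coproduct morphism $0+0\colon X+X\to Y+Y$ is again the zero morphism: by the universal property of the coproduct, $0+0=\couniv{\iota_1\comp 0}{\iota_2\comp 0}=\couniv{0}{0}=0$ (equivalently, one may note that $0+0\cong 0$ for the zero object, since the initial object is absorbing under binary coproducts).

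Now instantiate \eqref{eq - D(h)} at $h=0\colon X\to Y$. Commutativity of the left-hand square gives $\delta_Y\comp D(0)=(0+0)\comp\delta_X=0\comp\delta_X=0$. Since $\delta_Y$ is a monomorphism, this forces $D(0)=0$, which is precisely the assertion that $D$ preserves the zero morphism.

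There is no real obstacle here; the statement is essentially bookkeeping, and the only thing to keep in mind is that $0+0=0$ for the coproduct of zero morphisms, together with the fact that a kernel is a monomorphism. An alternative route—slightly longer—would be to observe that $D$ preserves the zero object (because $0+0\cong 0$ and $\nabla_0=1_0$, so its kernel is $0$) and then use functoriality of $D$ together with the factorization of $0\colon X\to Y$ through the zero object; but the direct argument via \eqref{eq - D(h)} is the shortest.
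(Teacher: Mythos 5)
Your argument is correct and is exactly the routine verification the paper has in mind: the lemma is stated with its proof omitted as immediate, and reading $D(0)$ off the defining square for $D(h)$, using $0+0=\couniv{0}{0}=0$ and the fact that the kernel $\delta_Y$ is monic, is the intended bookkeeping. Nothing to add.
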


Approximate differences are compatible with composition:

\begin{lemma}\label{lemma: distributivity for minus}
	Under the above assumptions on $\C$, we have
	\[
		l (f-g)=lf-lg\qquad \text{and} \qquad (f-g)D(h)=fh-gh
	\]
	whenever these equalities make sense.
\end{lemma}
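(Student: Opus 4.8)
The plan is to unwind both identities to the level of the copairing maps $\couniv{\cdot}{\cdot}$ out of a binary coproduct, and then settle the resulting equalities by precomposing with the two coprojections $\iota_{1}$, $\iota_{2}$, invoking the universal property of the coproduct. Throughout, "whenever these equalities make sense" simply means that the relevant difference objects and the maps $D(h)$ exist, which is ensured by the running hypothesis that $\C$ is pointed with functorially chosen kernels and binary coproducts.

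For the first identity, fix $f$, $g\colon X\to Y$ and $l\colon Y\to Z$; both sides are morphisms $D(X)\to Z$. I would expand $l(f-g)=l\couniv{f}{g}\delta_{X}$ and $lf-lg=\couniv{lf}{lg}\delta_{X}$, reducing the claim to $l\couniv{f}{g}=\couniv{lf}{lg}$ as morphisms $X+X\to Z$. Precomposing with $\iota_{1}$ yields $l\couniv{f}{g}\iota_{1}=lf$ and precomposing with $\iota_{2}$ yields $l\couniv{f}{g}\iota_{2}=lg$, so by the universal property of $X+X$ the two maps coincide.

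For the second identity, fix $h\colon W\to X$ and $f$, $g\colon X\to Y$; both sides are morphisms $D(W)\to Y$. I would first use the defining square of $D(h)$ in~\eqref{eq - D(h)}, namely $\delta_{X}D(h)=(h+h)\delta_{W}$, to rewrite $(f-g)D(h)=\couniv{f}{g}\delta_{X}D(h)=\couniv{f}{g}(h+h)\delta_{W}$, while $fh-gh=\couniv{fh}{gh}\delta_{W}$. This reduces the claim to $\couniv{f}{g}(h+h)=\couniv{fh}{gh}$ as morphisms $W+W\to Y$. Since $(h+h)\iota_{i}=\iota_{i}h$ for $i\in\{1,2\}$, precomposing with $\iota_{1}$ and $\iota_{2}$ gives $fh$ and $gh$ respectively, and the universal property of $W+W$ finishes the argument.

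There is no genuine obstacle: the whole proof is formal, using only the universal property of binary coproducts, the definition of the approximate difference as $f-g=\couniv{f}{g}\delta_{X}$, and the naturality square defining $D(h)$. The only point needing a little care is bookkeeping—keeping track of which coproduct ($X+X$ versus $W+W$) the copairing is taken over, and which $\delta$ is being used—so in writing it up I would make the source and target of each map explicit at each step.
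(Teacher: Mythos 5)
Your proposal is correct and follows essentially the same route as the paper: the first identity via the universal property of the binary coproduct (reducing to $l\couniv{f}{g}=\couniv{lf}{lg}$), and the second via the commuting square $\delta_{X}D(h)=(h+h)\delta_{W}$ defining $D(h)$ in~\eqref{eq - D(h)}. Your write-up merely spells out the details that the paper's one-line proof leaves implicit.
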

\begin{proof}
	The second equality is a consequence of the commutativity of~\eqref{eq - D(h)} while the first follows immediately from the universal property of the coproduct.
\end{proof}

\begin{remark}\label{rem:eqsub}
	For future use, we take note of the equality $f-0= f\varsigma_{\mathrm{dom}(f)}$ which holds for any morphism $f$.
\end{remark}

For the moment it is not so clear whether the definition of $f-g$ encodes a ``difference'' as in the abelian context, in the sense that $f=g$ if and only if $f-g=0$. Under certain conditions on the surrounding category, this does indeed happen.

\begin{lemma}\label{lemma : f= g ssi f-g=0}
	Under the above assumptions on $\C$, we have
	\[
		f=g\quad\To\quad f-g=0.
	\]
	If, in addition, $\C$ is a normal category, then \(f-g=0\) implies \(f=g\).
\end{lemma}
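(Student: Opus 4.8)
The plan is to treat the two implications separately; only the second uses normality of $\C$.

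For the implication $f=g\To f-g=0$: when $f=g$ the cofactorization $\couniv{f}{g}=\couniv{f}{f}$ agrees with $f\nabla_X$ after precomposition with either coproduct injection (indeed $\couniv{f}{f}\iota_i=f=f\nabla_X\iota_i$ for $i=1,2$), so the universal property of $X+X$ gives $\couniv{f}{f}=f\nabla_X$. Then
\[
	f-g=\couniv{f}{g}\delta_X=f\nabla_X\delta_X=0,
\]
since $\delta_X=\kker(\nabla_X)$. (Alternatively, one can read this off from Remark~\ref{rem:eqsub} and Lemma~\ref{lemma: distributivity for minus}.)

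For the converse, assume $\C$ is normal and $f-g=\couniv{f}{g}\delta_X=0$. The first step is to note that $\nabla_X=\couniv{1_X}{1_X}$ is a split epimorphism---it is split by $\iota_1$---hence a regular epimorphism, hence a \emph{normal} epimorphism because $\C$ is normal. The second, purely formal, step is that any normal epimorphism is the cokernel of its own kernel: if $p=\coker(h)$ and $k=\kker(p)$, then $h$ factors through $k$, so for every $q$ one has $qk=0$ iff $qh=0$ iff $q$ factors through $p$; thus $p=\coker(k)$, the factorization being unique since $p$ is epic. Applying this to $p=\nabla_X$ yields $\nabla_X=\coker(\delta_X)$. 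Therefore $\couniv{f}{g}$, which kills $\delta_X$, factors as $\couniv{f}{g}=h\nabla_X$ for a (unique) $h\colon X\to Y$; precomposing with $\iota_1$ and $\iota_2$ gives $f=h\nabla_X\iota_1=h=h\nabla_X\iota_2=g$, as desired.

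The one genuinely non-formal point---the main obstacle, such as it is---is the identification $\nabla_X=\coker(\delta_X)$, which is exactly where normality enters: in a general pointed category a split epimorphism need not be a cokernel at all, so the hypothesis $\couniv{f}{g}\delta_X=0$ would carry no information about how $\couniv{f}{g}$ factors. Everything else is a bookkeeping exercise with the universal properties of the coproduct $X+X$ and of the kernel $\delta_X$.
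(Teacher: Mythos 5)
Your proof is correct and takes essentially the same route as the paper's: the first implication rests on $\delta_X=\kker(\nabla_X)$ together with $\couniv{f}{f}=f\nabla_X$, and the converse on the identification $\nabla_X=\coker(\delta_X)$ in a normal category, exactly as in the text. The only difference is that the paper cites this last fact from Bourn--Janelidze, whereas you derive it directly from the definition of normality (split epi $\Rightarrow$ regular epi $\Rightarrow$ normal epi $\Rightarrow$ cokernel of its own kernel), a correct and harmless unpacking.
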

\begin{proof}
	For the first implication, as mentioned in~\cite[Section 3.1]{DB-ZJ-2009b}, this follows from the fact that $\couniv{f}{f}=f\couniv{1_X}{1_X}$ while $\delta_X$ is the kernel of $\couniv{1_X}{1_X}$, so that
	\[
		f-g=\couniv{f}{f} \delta_X=f\couniv{1_X}{1_X}\delta_X=0.
	\]

	If $\C$ is a normal category, then as already mentioned in~\cite[Introduction]{DB-ZJ-2009b}, the codiagonal $\couniv{1_X}{1_X}$ is the cokernel of $\delta_X$. By assumption, we have that $\couniv{f}{g} \delta_X=0$. Hence $\couniv{f}{g}$ factors through $\couniv{1_X}{1_X}$ via a morphism $h$ so that $\couniv{f}{g}=h\couniv{1_X}{1_X}=\couniv{h}{h}$ and $f=h=g$.
\end{proof}

As a consequence, normality can be characterized in terms of approximate differences.

\begin{proposition}\label{Proposition Real Differences equivalent to Zero}
	A pointed regular finitely cocomplete category is normal if and only if for any two given morphisms~$f$,~$g\colon{X\to Y}$,
	\[
		f-g = 0\quad\To\quad f=g.
	\]
\end{proposition}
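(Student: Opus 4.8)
The plan is to treat the two implications separately and to notice that one of them is already available. The direction ``$\C$ normal $\To$ ($f-g=0$ implies $f=g$)'' is precisely the second assertion of Lemma~\ref{lemma : f= g ssi f-g=0}, so nothing new is needed there. All the work lies in the converse: assuming that $f-g=0$ forces $f=g$ for every parallel pair $f$, $g\colon X\to Y$, we must show that $\C$ is normal, i.e.\ that every regular epimorphism is a cokernel of some morphism.

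First I would fix an arbitrary regular epimorphism $q\colon X\to Q$, let $k\colon K\to X$ be its kernel, and let $(R,r_1,r_2)$ be its kernel pair; since $\C$ is regular, $q$ is the coequalizer of $r_1$ and $r_2$. Put $c\defeq\coker(k)\colon X\to C$; as $qk=0$, the universal property of the cokernel produces a unique $\bar q\colon C\to Q$ with $\bar q c=q$. The key claim is that $c$ also coequalizes $r_1$ and $r_2$. By the hypothesis it suffices to check that $cr_1-cr_2=0$. Writing $r_1-r_2$, as usual, for $\couniv{r_1}{r_2}\delta_R\colon D(R)\to X$, Lemma~\ref{lemma: distributivity for minus} gives $cr_1-cr_2=c(r_1-r_2)$ and $qr_1-qr_2=q(r_1-r_2)$. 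Now $qr_1=qr_2$ by definition of the kernel pair, so the first implication of Lemma~\ref{lemma : f= g ssi f-g=0} yields $q(r_1-r_2)=qr_1-qr_2=0$; hence $r_1-r_2$ factors through $k=\ker(q)$, say $r_1-r_2=k\mu$, and therefore $c(r_1-r_2)=ck\mu=0$. Thus $cr_1-cr_2=0$, and the hypothesis gives $cr_1=cr_2$, proving the claim.

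It then remains to compare the two coequalizers. Since $q$ is the coequalizer of $r_1$, $r_2$ and $c$ coequalizes them, there is $\hat c\colon Q\to C$ with $\hat c q=c$; from $\bar q\hat c q=\bar q c=q$ and $q$ epic we get $\bar q\hat c=1_Q$, and from $\hat c\bar q c=\hat c q=c$ and $c$ epic we get $\hat c\bar q=1_C$, so $\bar q$ is an isomorphism and $q$ is, up to isomorphism, the cokernel $c$ of $k$. As $q$ was arbitrary, every regular epimorphism is normal, i.e.\ $\C$ is normal. The step carrying the actual idea---and the main thing to spot---is the middle one: recognising that forcing $\coker(\ker q)$ to respect the kernel pair of $q$ amounts to showing that the \emph{approximate} difference $r_1-r_2$ of the kernel-pair projections already lands in the kernel $K$; once that is seen, everything else is formal play with (co)equalizers together with the calculus of approximate differences from Lemmas~\ref{lemma: distributivity for minus} and~\ref{lemma : f= g ssi f-g=0}.
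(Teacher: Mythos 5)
Your proof is correct: both lemmas are invoked where they apply, the factorization of $r_1-r_2$ through $\ker(q)$ is legitimate (the needed kernels, cokernels and coproducts all exist under the stated hypotheses), and the final comparison of the two epimorphisms $q$ and $c$ is sound. Your route differs from the paper's in its decomposition, though the engine is the same. The paper disposes of the converse in one line by showing that for an arbitrary parallel pair $f$, $g$ the coequalizer of $f$ and $g$ \emph{is} the cokernel of the approximate difference $f-g\colon D(X)\to Y$: the coequalizer $q$ satisfies $q(f-g)=qf-qg=0$, and any $h$ with $h(f-g)=hf-hg=0$ has $hf=hg$ by the hypothesis, so the two universal properties coincide; since every regular epimorphism is a coequalizer, normality follows at once. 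You instead exhibit each regular epimorphism $q$ as the cokernel of its own kernel, which is a somewhat more explicit (and classically flavoured) witness of normality, but it costs you the extra step of factoring $r_1-r_2$ through $\ker(q)$ and the two-sided argument identifying $c=\coker(\ker(q))$ with $q$. Note that your own setup admits the paper's shortcut: since $q(r_1-r_2)=qr_1-qr_2=0$ and any $h$ with $h(r_1-r_2)=0$ satisfies $hr_1=hr_2$ and hence factors uniquely through $q=\mathrm{coeq}(r_1,r_2)$, one concludes directly that $q=\coker(r_1-r_2)$, with no need to introduce $\coker(\ker(q))$ at all. Both arguments rest on exactly the same two ingredients, Lemma~\ref{lemma: distributivity for minus} and the first implication of Lemma~\ref{lemma : f= g ssi f-g=0}.
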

\begin{proof}
	Lemma~\ref{lemma : f= g ssi f-g=0} shows that the implication holds under the given assumptions. For the converse, it suffices to prove that the coequalizer of $f$, $g\colon X\to Y$ is a cokernel of $f-g\colon D(X)\to Y$. This implies that all regular epimorphisms are normal. If indeed $h(f-g)=hf-hg=0$, then $hf=hg$ by assumption, so that the universal property of the coequalizer implies the universal property of the cokernel.
\end{proof}

\subsection{Subtractive categories}\label{sec:Subtractive Cats}
Next to the above property, we will also need that
\[
	f-0=g-0\qquad\To\qquad f=g.
\]
Since $f\varsigma_X=f-0=g-0=g\varsigma_X$ by Remark~\ref{rem:eqsub}, this happens as soon as $\varsigma_X$ is an epimorphism. We ask slightly more, because then we find ourselves in a known situation: the context of \emph{subtractive categories}. We will not use the original definition of subtractivity here, but rather an equivalent formulation~\cite[Theorem 5.1]{DB-ZJ-2009}, valid for pointed regular categories with binary coproducts:

\begin{definition}\label{Definition Subtractive Category}
	A pointed regular category with binary coproducts is called \defn{subtractive} when for each object $X$, the composite
	\[
		1_X - 0 =\couniv{1_{X}}{0}\delta_{X}=\varsigma_X \colon{D(X)\to X}
	\]
	is a regular epimorphism.
\end{definition}

We see that the natural transformation $\varsigma$ is a (component-wise) regular epimorphism if and only if we have a subtractive category. Via this short observation it is easy to see that any abelian category is subtractive. Indeed, in this setting, $\delta_X=\univ{ 1_X}{ -1_X}$, and the domain of the difference is $D(X)=X$.

Almost by definition, subtractive categories form a context in which a difference of two morphisms may be expressed as an approximate difference. The disadvantage of Definition~\ref{Definition Subtractive Category} is that it does not immediately show the relationships with other categorical-algebraic conditions. Subtractive categories were introduced in~\cite{ZJanelidze-Subtractive} as a categorical characterization of subtractive varieties of universal algebras, studied earlier in~\cite{Ursini3}, in a context which is not necessarily regular and where binary coproducts need not exist. A relation $r=\univ{ r_{1}}{r_{2}}\colon {R\to X\times Y}$ in a pointed category is said to be \defn{left (right) punctual}~\cite{Bourn2002} if $\univ{1_X}{0}\colon {X\to X\times Y}$ (respectively $\univ{0}{1_Y}\colon {Y\to X\times Y}$) factors through $r$. A pointed finitely complete category~$\C$ is said to be \defn{subtractive}, if every left punctual reflexive relation on any object $X$ in~$\C$ is right punctual.

By~\cite[Theorem 5.1]{DB-ZJ-2009}, in a pointed regular category with binary coproducts, this agrees with Definition~\ref{Definition Subtractive Category}. The arguments in~\cite{ZJanelidze-Subtractive} further show that any pointed protomodular category is subtractive, so that any additive category with finite limits, any {homological category} in the sense of~\cite{Borceux-Bourn}, and in particular any {semi-abelian category}~\cite{Janelidze-Marki-Tholen} is subtractive.

It is explained in~\cite{ZJanelidze-Subtractive} that a variety of algebras is subtractive when the corresponding theory contains a unique constant $0$ and a binary operation $-$, called a \defn{subtraction}, subject to the equations $x-0 = x$ and $x-x = 0$. So, for instance, any pointed variety of algebras whose theory contains a group operation is an example.

\subsection{A stronger context for subtraction}\label{sec -homological cat good for D and sigma}
Since we are eventually taking homology in the codomain of a functor $F\colon \C \to \E$, we shall be working with minimal assumptions on $\C$ (a pointed regular category with binary coproducts, with enough projectives that satisfies Condition \P) and use the stronger but natural context of a \emph{homological} category for $\E$. Homological categories are a suitable framework for our calculus of differences for four reasons:
\begin{enumerate}
	\item they admit kernels and cokernels of proper morphisms;
	\item they are subtractive;
	\item they are normal;
	\item the functor $D\colon \E\to\E$ preserves normal epimorphisms.
\end{enumerate}

Our aim is now to prove that the fourth point does indeed hold. Here we again use that a homological category is always regular Mal'tsev. Recall~\cite{EGoeVdL,Bourn2003,Carboni-Kelly-Pedicchio} that a regular category is \defn{Mal'tsev} if and only if any split epimorphism of regular epimorphisms is a regular pushout---see the proof of Proposition~\ref{propo_equivalent_conditions} which recalls what this means.

\begin{lemma}\label{Lemma D preserves regular epi}
	If $\C$ is a pointed regular Mal'tsev category with binary coproducts, then the functor $D\colon \C\to \C$ preserves regular epimorphisms.
\end{lemma}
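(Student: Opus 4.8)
The plan is to realise $D(h)$ as a restriction to kernels of a regular epimorphism of the form $X+X\to(Y+Y)\times_Y X$, the existence of which is exactly what the Mal'tsev hypothesis buys us.

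First I would record that $h+h\colon X+X\to Y+Y$ is again a regular epimorphism. This holds because coproducts of regular epimorphisms are regular epimorphisms in any regular category: writing $h$ as the coequalizer of its kernel pair $R\rightrightarrows X$, the map $h+1_X$ is the coequalizer of $R+X\rightrightarrows X+X$ (colimits commute with colimits, and on the $X$-summand both legs are the identity), and likewise for $1_Y+h$; their composite $h+h$ is then a composite of regular epimorphisms. Next, I would consider the right-hand square of diagram~\eqref{eq - D(h)}, with horizontal arrows $\nabla_X$, $\nabla_Y$ and vertical arrows $h+h$, $h$. Here $\nabla_X$ and $\nabla_Y$ are split epimorphisms, compatibly split by the coproduct inclusions $\iota_1$, so this square is a split epimorphism of the regular epimorphisms $h+h$ and $h$ in the arrow category. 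Since $\C$ is regular Mal'tsev, such a square is a regular pushout (this is the characterisation recalled just before the statement, $\grave{\text{a}}$ la~\cite{Bourn2003}; compare the use of~\cite[Proposition~3.2]{Bourn2003} in the proof of Proposition~\ref{propo_equivalent_conditions}). Concretely, the comparison morphism $q\colon X+X\to P$ to the pullback $P\defeq(Y+Y)\times_Y X$ of $\nabla_Y$ along $h$ is a regular epimorphism.

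To finish, let $\pi\colon P\to X$ denote the second pullback projection, so that $\nabla_X=\pi q$ while $\pi$ is a pullback of $\nabla_Y$. Because kernels are stable under pullback, $\pi$ has kernel canonically isomorphic to $\Ker(\nabla_Y)=D(Y)$; fix such an iso $\theta\colon\Ker(\pi)\to D(Y)$ with $\delta_Y\,\theta=p_1\kappa$, where $\kappa\colon\Ker(\pi)\hookrightarrow P$ is the inclusion and $p_1\colon P\to Y+Y$ the first projection. From $\nabla_X=\pi q$ it follows that $\delta_X\colon D(X)\to X+X$ is precisely the pullback of $q$ along $\kappa$, whence the induced morphism $u\colon D(X)\to\Ker(\pi)$ is a regular epimorphism by regularity of $\C$ (pullback-stability of regular epimorphisms). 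A short diagram chase with the universal property of $\delta_Y=\kker(\nabla_Y)$, using $\delta_Y\,\theta u=p_1\kappa u=p_1 q\,\delta_X=(h+h)\,\delta_X$, identifies $\theta u$ with $D(h)$; hence $D(h)$ is a composite of a regular epimorphism with an isomorphism, so a regular epimorphism.

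I do not expect a genuine obstacle here: the argument is an assembly of standard facts about regular and Mal'tsev categories. The only points requiring care are invoking the regular-pushout characterisation in the correct orientation (reading the square as a split epimorphism of regular epimorphisms rather than the transpose) and the final bookkeeping that the kernel restriction of $q$ really is the functorial map $D(h)$ rather than merely isomorphic to it; both are routine.
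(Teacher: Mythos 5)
Your proposal is correct and follows essentially the same route as the paper: $h+h$ is a regular epimorphism, the right-hand square of~\eqref{eq - D(h)} is a regular pushout by the Mal'tsev characterization, the kernel of the induced projection $P\to X$ is $D(Y)$, and $D(h)$ is identified with the pullback of the regular-epimorphic comparison $X+X\to P$ along that kernel. The only difference is presentational: the paper states directly that the square with vertices $D(X)$, $X+X$, $D(Y)$, $P$ is a pullback and concludes by pullback-stability, whereas you carry out the same identification via the intermediate object $\Ker(\pi)$ and the isomorphism $\theta$.
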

\begin{proof}
	If $h$ is a regular epimorphism, then $h+h$ is a regular epimorphism as well. Hence if $\C$ is regular Mal'tsev, then in the diagram~\eqref{eq - D(h)} defining $D(h)$, the square on the right is a regular pushout.
	Let $P$ denote the pullback of $\nabla_Y$ and $h$, and write $h^*(\nabla_Y)\colon {P\to X}$ for the induced arrow. It is then easy to see that on the one hand, the kernel of $h^*(\nabla_Y)$ is $D(Y)$, while, on the other hand, the square
	\[
		\xymatrix@!0@=5em{D(X) \pullback \ar@{->}[r]^-{\delta_{X}} \ar@{->}[d]_-{D(h)} & X+X \ar@{-{>>}}[d]^-{\univ{h+h}{\nabla_X}} \\
		D(Y) \ar@{->}[r] & P }
	\]
	is a pullback. It follows that the arrow $D(h)$ is a regular epimorphism.
\end{proof}

\begin{remark}
	The functor $D$ need not be exact, not even in the context of a semi-abelian category. We consider the alternating group $A_3$ as a normal subgroup of the symmetric group~$S_3$. In the notation of Example~\ref{ex - D(Grp)}, we have that for $\overline{(12)}\underline{(12)}\in D(S_3)$ and $\underline{(123)}\overline{(132)}\in D(A_3)$ the conjugate
	\[
		\overline{(12)}\underline{(12)}\underline{(123)}\overline{(132)}\underline{(12)}\overline{(12)}=\overline{(12)}\underline{(13)}\overline{(132)}\underline{(12)}\overline{(12)}
	\]
	in $D(S_3)$ is not in $A_3+A_3$ and therefore not in $D(A_3)$.
\end{remark}

\subsection{Differences and the functor $D$}

Even though the functor $D$ is engaged in the construction of the approximate difference, outside of the additive context it need not interact well with those differences. It is indeed not true that $D(f-g)$ is equal to $D(f)-D(g)$ in general. If this were the case, then by Lemma~\ref{lemma : D preserves 0} we should have $D(\varsigma_X)=D(1_X-0)=D(1_X)-D(0)=\varsigma_{D(X)}$.
However, in $\Gp$,
\[
	D(\varsigma_X)\bigl(\overline{(\underline{x}\overline{x}^{-1})} \underline{(\overline{x}\underline{x}^{-1})} \bigr)=\overline{x}^{-1}\underline{x}\neq \underline{x}\overline{x}^{-1}=\varsigma_{D(X)}\bigl(\overline{(\underline{x}\overline{x}^{-1})}\underline{(\overline{x}\underline{x}^{-1})} \bigr)
\]
holds for all $x\in X$ by Example~\ref{ex - D(Grp)}.

\begin{remark}[Compositions of $\varsigma_X$]\label{rem : composition of varsigma_X}
	We know that in general, $\varsigma_{D(X)}\neq D(\varsigma_X)$. However, thanks to the naturality of $\varsigma$, for any object $X$ we have
	\[
		\varsigma_X \varsigma_{D(X)}=\varsigma_X D(\varsigma_X)\colon D^2(X)\to X.
	\]
	We will also write this composition $\varsigma_X^2$ since its domain is $D^2(X)$. By induction, for all $n\geq 2$ we find the equality
	\begin{equation}
		\varsigma_X \varsigma_{D(X)}\cdots \varsigma_{D^{n-1}(X)}=\varsigma_X D(\varsigma_X)\cdots D^{n-1}(\varsigma_X)\colon D^n(X)\to X \label{eq :equivalence def compos sigma}
	\end{equation}
	and we will denote this morphism $\varsigma_X^n$.

	By convention, we will extend the notation $\varsigma_X^n$ to all natural numbers by letting $\varsigma_X^0=1_X$ (since its domain will be $D^0(X)=X$) and $\varsigma_X^1=\varsigma_X$ (since its domain will be $D(X)$). Note that in a subtractive category, all the morphisms $\varsigma_X^n$ are regular epimorphisms (as composites of such).

	For any $f\colon X\to Y$ and all $n\in \NN$, we have
	\begin{equation}\label{eq - compatibility sigma's and D's}
		f \varsigma^n_X=\varsigma^n_Y D^n(f)
	\end{equation}
	so that the $\varsigma^n_X$ form a natural transformation $\varsigma^n\colon D^n\To 1_\C$.
\end{remark}

\section{Approximate homotopies}\label{sec - approx homotopy}
\begin{figure}
	%\resizebox{\textwidth}{!}
	{\(
	\xymatrix{\cdots \ar[r] & C_2 \ar@{.>}[ld] \ar@<-.5ex>[d]_-{f_2} \ar@<.5ex>[d]^-{g_2} \ar[r]^-{d^C_2} &C_1 \ar@{.>}[ld]|-{h_1} \ar@<-.5ex>[d]_-{f_1} \ar@<.5ex>[d]^-{g_1} \ar[r]^-{d^C_1} & C_0 \ar@{.>}[ld]|-{h_0} \ar@<-.5ex>[d]_-{f_0} \ar@<.5ex>[d]^-{g_0} \ar[r] & 0 \ar@{.>}[dl]^-(.4){h_{-1}=0}\\
	\cdots \ar[r] & E_2 \ar[r]_-{d^E_2} & E_1 \ar[r]_-{d^E_1} & E_0 \ar[r] & 0}
	\)}
	\caption{The lowest degrees of a chain homotopy}\label{Fig Chain}
\end{figure}
Recall that given a pair $f$, $g\colon{(C,d^C)\to (E,d^E)}$ of parallel morphisms of positively graded chain complexes in an abelian category---see Figure~\ref{Fig Chain}---a \defn{chain homotopy} from $f$ to $g$ is a morphism of graded objects $h\colon{(C,d^C)\to (E,d^E)}$ of degree $1$ such that
\[
	f - g = d^{E}\comp h + h\comp d^{C}
\]
holds.

\begin{figure}[h]
	%\resizebox{\textwidth}{!}
	{\(
	\xymatrix@C=3.5em{\cdots \ar[r] & C_1 \ar@<-.5ex>[d]_-{f_1} \ar@<.5ex>[d]^-{g_1} \ar[r]^-{d^C_1} & C_0 \ar@<-.5ex>[d]_-{f_0} \ar@<.5ex>[d]^-{g_0} \ar@{-{ >>}}[r]^-{\coker(d_1^C)} & X \ar[d]^-{x}\\
	\cdots \ar[r] & E_1 \ar[r]_-{d^E_1} & E_0 \ar@{-{ >>}}[r]_-{\coker(d_1^E)} & Y}
	\)}\caption{Two liftings of a morphism $x$}\label{Setting for the construction of h0}
\end{figure}
In an abelian category, the lifting of a morphism $x\colon X\to Y$ to a morphism of resolutions of $X$ and $Y$ given in Lemma~\ref{Lemma Existence Lifting} is unique up to a chain homotopy (see for instance~\cite{MacLane:Homology} for a full proof). Given two such liftings as in Figure~\ref{Setting for the construction of h0}, a homotopy is obtained through a recursive construction, which depends on projectivity of the objects of $C$ and the exactness of the bottom sequence---any projective resolution having both features. In the first step, for instance, $h_0$ is obtained as a lifting over the epimorphism $\bar{d}_1^E$ of the lifting of $f_0-g_0$ through the image of $d_1^E$. That lifting does indeed exist, because by the exactness of the bottom sequence, the image of $d_1^E$ is the kernel of its cokernel, while
\[
	\coker(d_1^E)\comp(f_0-g_0)=(x-x)\comp \coker(d_1^C)=0.
\]
This argument uses that the domain of a difference of two morphisms with a projective domain (in this case, the domain of $f_0-g_0$) is still a projective object, which is true in all abelian categories, because there the domain of a difference of two parallel morphisms is just the domain of those morphisms themselves. In a semi-abelian category, this is not automatic. For the difference object $D(X)$ to be projective as soon as~$X$ is, we are led to assuming that the category satisfies Condition~\P.

\subsection{Towards a definition}\label{subsec - definition by hand of approximate homotopy}
We let $\C$ be a normal category with binary coproducts and enough projectives which satisfies Condition \P. Let $C(X)$ and~$E(Y)$ be projective resolutions of, respectively, $X$ and $Y$, two objects of $\C$. Given a morphism $x\colon X\to Y$, we know from Lemma~\ref{Lemma Existence Lifting} that a morphism of projective resolutions $f\colon {C(X)\to E(Y)}$ exists such that $\H_0(f)=x$. Let us now assume that $g\colon {C(X)\to E(Y)}$ is another such morphism (Figure~\ref{Setting for the construction of h0} again). We follow the example of the abelian context, to recursively construct what should amount to a chain homotopy from $f$ to $g$. Since the standard definition does not immediately make sense here, we shall simply proceed with the construction itself and take what comes out as a starting point for a non-additive definition.

\begin{figure}[h]
	%\resizebox{\textwidth}{!}
	{\(\xymatrix@C=3.5em{
	& & D(C_0) \ar@/_/@{-->}[ddl]|-(0.3){\alpha_0} \ar@/_/@{-->}[ddll]_-(0.35){h_0} \ar@{->}@/_4ex/[dd]^-(.2){f_0-g_0}& \\
	C_1\ar[rr]^-(0.3){d^C_1} \ar@<-.5ex>[d]_-(.3){f_1} \ar@<.5ex>[d]^-(.3){g_1} &  & C_0 \ar@<-.5ex>[d]_-(0.3){f_0} \ar@<.5ex>[d]^-(0.3){g_0} \ar@{-{ >>}}[r]^-{\coker(d_1^C)} & X \ar[d]^-{x} \\
	E_1 \ar@{ >>}[r]_-{\bar{d}_1^E } & \Ima(d_1^E) \ar@{{ |>}->}[r]_-{\kker(\eta^E)} & E_0 \ar@{-{ >>}}[r]_-{\coker(d_1^E)} & Y}
	\)}\caption{Construction of $h_0$}\label{Constructing h_0 Intro}
\end{figure}
\subsubsection*{Construction of $h_0$}
From Lemma~\ref{lemma : f= g ssi f-g=0} and Lemma~\ref{lemma: distributivity for minus}, it follows that the composite $\coker(d_1^E)\comp(f_0-g_0)$ is zero. Hence $f_0-g_0$ lifts over the image of $d_1^E$ via a morphism $\alpha_0$ as in Figure~\ref{Constructing h_0 Intro}. By the assumption that the class of projective objects is closed under protosplit subobjects, the difference object $D(C_0)$ is projective. Hence, since the bottom chain complex is exact at each level, there exists a lifting $h_0$ of $\alpha_0$ over $\bar{d}_1^E$. The construction ensures that
\[
	d_1^E h_0=f_0-g_0.
\]
We remark that this equality is the same in the abelian context. The only novelty here is the domain $D(C_0)$ shared by $h_0$ and the difference $f_0-g_0$.

\begin{figure}
	%\resizebox{\textwidth}{!}
	{\(\xymatrix@C=3.5em{
	& & D^2(C_1) \ar@/_/@{-->}[ddl]|-(0.3){a_1} \ar@/_/@{-->}[ddll]_-(0.35){h_1} \ar@{->}@/_4ex/[dd]^-(.2){(f_1-g_1)-h_0D(d_1^C)}& \\
	C_2\ar[rr]^-(0.3){d^C_2} \ar@<-.5ex>[d]_-(.3){f_2} \ar@<.5ex>[d]^-(.3){g_2} &  & C_1 \ar@<-.5ex>[d]_-(0.3){f_1} \ar@<.5ex>[d]^-(0.3){g_1} \ar[r]^-{d_1^C} & C_0  \ar@<-.5ex>[d]_-{f_0} \ar@<.5ex>[d]^-{g_0} \\
	E_2 \ar@{ >>}[r]_-{\bar{d}_2^E} & \Ima(d_2^E) \ar@{{ |>}->}[r]_-{\kker(d_1^E)} & E_1 \ar[r]_-{d_1^E} & E_0}
	\)}\caption{Construction of $h_1$}\label{Constructing h_1 Intro}
\end{figure}
\subsubsection*{Construction of $h_1$}
The construction of $h_1$, the second morphism in the homotopy, depends on the same strategy and the same preliminary results about differences of morphisms: see Figure~\ref{Constructing h_1 Intro}. Since
\begin{align*}
	d_1^E((f_1-g_1)-h_0 D(d_1^C)) & = d_1^E(f_1-g_1)-d_1^Eh_0 D(d_1^C)           \\
	                              & = (f_0-g_0)D(d_1^C) - (f_0-g_0)D(d_1^C) = 0,
\end{align*}
there exists a unique morphism $\alpha_1$ such that
\[
	\kker(d_1^E) \alpha_1 = (f_1-g_1) - h_0 D(d_1^C).
\]
Using the exactness of the bottom chain complex and the fact that $D^2(C_1)$ is projective, we obtain a lifting $h_1$ of $\alpha_1$ over $\bar{d}_2^E$. We find
\[
	d_2^E h_1 = (f_1-g_1)-h_0 D(d_1 ^C).
\]

\subsubsection*{Construction of $h_2$}
We repeat the same strategy once again: we have
\begin{align*}
	 & d_2^E \left((f_2-g_2)\varsigma_{D(C_2)}-h_1D^2(d_2^C) \right)                                                                              \\
	 & \overset{\ref{lemma: distributivity for minus}}{\qeq} d_2^E(f_2-g_2)\varsigma_{D(C_2)}-d_2^Eh_1D^2(d_2^C)                                  \\
	 & \overset{\text{def.\ $h_{1}$}}{\qeq} d_2^E(f_2-g_2)\varsigma_{D(C_2)}-\left( (f_1-g_1)-h_0D(d_1^C)\right) D^2(d_2^C)                       \\
	 & \overset{\ref{lemma: distributivity for minus}}{\qeq} d_2^E(f_2-g_2)\varsigma_{D(C_2)}-\left( (f_1-g_1)D(d_2^C)-h_0D(d_1^C)D(d_2^C)\right) \\
	 & \equal d_2^E(f_2-g_2)\varsigma_{D(C_2)}-\left( (f_1-g_1)D(d_2^C)-h_0D(d_1^C d_2^C)\right)                                                  \\
	 & \equal d_2^E(f_2-g_2)\varsigma_{D(C_2)}-\left( (f_1-g_1)D(d_2^C)-h_0D(0)\right)                                                            \\
	 & \overset{\ref{lemma : D preserves 0}}{\qeq} d_2^E(f_2-g_2)\varsigma_{D(C_2)}-\left( (f_1-g_1)D(d_2^C)-0\right)                             \\
	 & \overset{\ref{rem:eqsub}}{\qeq} d_2^E(f_2-g_2)\varsigma_{D(C_2)}-(f_1-g_1)D(d_2^C)\left( 1_{D(C_2)}-0\right)                               \\
	 & \overset{\ref{rem:eqsub}}{\qeq} d_2^E(f_2-g_2)\varsigma_{D(C_2)}-(f_1-g_1)D(d_2^C)\varsigma_{D(C_2)}                                       \\
	 & \overset{\ref{lemma: distributivity for minus}}{\qeq} d_2^E(f_2-g_2)\varsigma_{D(C_2)}-(f_1d_2^C-g_1d_2^C)\varsigma_{D(C_2)}               \\
	 & \equal d_2^E(f_2-g_2)\varsigma_{D(C_2)}-(d_2^E f_2-d_2^Eg_2)\varsigma_{D(C_2)}                                                             \\
	 & \overset{\ref{lemma: distributivity for minus}}{\qeq} d_2^E(f_2-g_2)\varsigma_{D(C_2)}-d_2^E(f_2-g_2)\varsigma_{D(C_2)}                    \\
	 & \overset{\ref{lemma : f= g ssi f-g=0}}{\qeq}0.
\end{align*}
Therefore, by the exactness of the bottom chain complex and by the fact that $D^3(C_2)$ is projective, we obtain a lifting $h_3$ of the unique morphism induced by the kernel of $d_2^E$ over $\bar{d}_3^E$. In summary: $d_3^E h_2 = (f_2-g_2)\varsigma_{D(C_2)}-h_1 D^2(d_2^C)$.

\subsubsection*{General case}
The construction of $h_2$ already gives us all the tools for defining~$h_n$ when $n\geq 3$. Following essentially the same strategy (which now involves Equation~\eqref{eq - compatibility sigma's and D's} as well) we find a lifting $h_n$ that satisfies
\begin{equation*}\label{eq - def of hn}
	d_{n+1}^E h_n = (f_n-g_n)\varsigma^{n-1}_{D(C_n)} - h_{n-1} D^n (d _n ^C).
\end{equation*}

By the convention explained in Remark~\ref{rem : composition of varsigma_X}, this general formula agrees with the cases $n=1$ and $n=2$. However, the case $n=0$ differs slightly.
In summary: with the previous assumptions and notation, we have a collection of morphisms $(h_n)_{n\in \NN}$ where $h_n\colon D^{n+1}(C_n) \to E_{n+1}$ such that
\begin{equation}\label{eq - def of hn for n=0 and general}
	\begin{cases}
		d_1^E h_0 = (f_0-g_0)                                                  &                       \\
		d_{n+1}^E h_n= (f_n-g_n)\varsigma^{n-1}_{D(C_n)} - h_{n-1} D^n (d_n^C) & \text{for $n\geq 1$}.
	\end{cases}
\end{equation}

In the context of a pointed category with kernels and binary coproducts, we can do exactly the same thing, but the notation is slightly different ($d_0$ instead of $\coker(d_1)$, see Remark~\ref{remark projective resolution}). This leads us to the following definition.

\begin{definition}\label{Def Approx Chain Homotopy}
	Let $\C$ be a pointed category with kernels and binary coproducts. Consider morphisms $f$, $g\colon C\to E$ between positively graded chain complexes $C$ and $E$. An \defn{(approximate) (chain) homotopy} from $f$ to $g$ is a sequence of morphisms $(h_n\colon D^{n+1}(C_n)\to E_{n+1})_{n\in \NN}$ for which Equations \eqref{eq - def of hn for n=0 and general} hold. We write $h\colon f\simeq g$.

	Two positively graded chain complexes $C$ and $E$ are \defn{(approximately) chain homotopically equivalent} ($C\simeq D$) when chain morphisms $\phi \colon C\to E$ and $\psi \colon E\to C$ exist, together with approximate homotopies $1_C\simeq\psi \phi$ and $1_E\simeq\phi \psi$.
\end{definition}

We proved:

\begin{theorem}[Homotopy unique existence of projective resolutions]\label{thm - projective resolution unique up to approx homoto}
	Let $\C$ be a pointed category with kernels and binary coproducts for which Condition $\P$ holds. Any lifting of a morphism ${x\colon X\to Y}$ in the sense of Lemma~\ref{Lemma Existence Lifting} is unique up to an approximate chain homotopy.

	When, moreover, $\C$ has enough projectives, then for any object $X\in \C$ there exists a projective resolution, unique up to an approximate chain homotopy equivalence.\noproof
\end{theorem}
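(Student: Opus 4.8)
The plan is to obtain the first statement directly from the recursive construction carried out just above Definition~\ref{Def Approx Chain Homotopy}, and then to deduce the second statement from the first by a formal argument. Throughout, the construction of the text applies verbatim in the present weaker setting, with the regular epimorphisms $d^C_0$, $d^E_0$ of a resolution playing the role of the cokernels $\coker(d^C_1)$, $\coker(d^E_1)$.

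For the first statement, let $f$, $g\colon C(X)\to E(Y)$ be two liftings of a morphism $x\colon X\to Y$ in the sense of Lemma~\ref{Lemma Existence Lifting}, and build morphisms $h_n\colon D^{n+1}(C_n)\to E_{n+1}$ by induction on $n\in\NN$, exactly as in the text. At each stage two points must be checked in order to produce $h_n$ as a lift along $\bar d^E_{n+1}$: first, that the domain $D^{n+1}(C_n)$ is projective; second, that the morphism to be lifted factors through the image of $d^E_{n+1}$, i.e. that a suitable composite with $d^E_n$ (resp.\ $d^E_0$) vanishes. For the projectivity I would isolate the auxiliary observation that $D$ sends projectives to projectives: if $P$ is projective then $D(P)=\kker(\nabla_P)$ is a protosplit subobject of the projective object $P+P$ (the relevant splitting being $\iota_1$), hence projective by Condition~\P; iterating this shows $D^{n+1}(C_n)$ is projective for all $n$. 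The vanishing in degree $0$ is immediate from Lemma~\ref{lemma: distributivity for minus} together with the first implication of Lemma~\ref{lemma : f= g ssi f-g=0}, since $d^E_0(f_0-g_0)=d^E_0f_0-d^E_0g_0=xd^C_0-xd^C_0=0$; the vanishing in degree $n\geq 1$ follows from those same two results together with the defining equation of $h_{n-1}$, the identity $D(0)=0$ of Lemma~\ref{lemma : D preserves 0}, the naturality relation $f\varsigma^n_X=\varsigma^n_YD^n(f)$ of Equation~\eqref{eq - compatibility sigma's and D's}, and Remark~\ref{rem:eqsub}, precisely as in the displayed computation for $h_2$. Since the resulting family $(h_n)_{n\in\NN}$ satisfies Equations~\eqref{eq - def of hn for n=0 and general} by construction, it is an approximate chain homotopy $f\simeq g$.

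For the second statement, assume $\C$ has enough projectives, so that $X$ admits at least one projective resolution by Proposition~\ref{prop enough proje ensure proje resol}. Let $C(X)$ and $E(X)$ be two projective resolutions of $X$. Applying Lemma~\ref{Lemma Existence Lifting} to $1_X$ in both directions yields chain morphisms $\phi\colon C(X)\to E(X)$ and $\psi\colon E(X)\to C(X)$, each a lifting of $1_X$. Then $\psi\phi$ and the identity chain morphism $1_{C(X)}$ are both liftings of $1_X=1_X\circ 1_X$, so by the first statement there is an approximate chain homotopy $1_{C(X)}\simeq\psi\phi$; symmetrically, $1_{E(X)}\simeq\phi\psi$. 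By Definition~\ref{Def Approx Chain Homotopy} this is exactly what it means for $C(X)$ and $E(X)$ to be approximately chain homotopically equivalent.

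I expect the only genuine obstacle to be the vanishing verification in the inductive step for general $n$, namely checking that $d^E_{n+1}\bigl((f_n-g_n)\varsigma^{n-1}_{D(C_n)}-h_{n-1}D^n(d^C_n)\bigr)=0$. One cannot simply imitate the abelian telescoping argument, since $D$ does not distribute over the approximate difference; the point is to thread the connecting morphisms $\varsigma^n$ through the computation so that all domains match, using Lemma~\ref{lemma: distributivity for minus} to pull composites in and out of differences and Remark~\ref{rem:eqsub} together with Equation~\eqref{eq - compatibility sigma's and D's} to absorb and reintroduce the $\varsigma$'s. Once this degree-bookkeeping is organised, the computation is routine and reduces, as in the $h_2$ case, to an instance of $a-a=0$.
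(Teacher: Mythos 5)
Your proposal is correct and follows essentially the same route as the paper: the theorem is proved by the recursive construction of the $h_n$ given just before Definition~\ref{Def Approx Chain Homotopy} (projectivity of $D^{n+1}(C_n)$ via Condition \P\ applied to the protosplit subobject $D(P)\leq P+P$, factorization through $\Ker(d^E_n)$ by the vanishing computation, then lifting over the regular epimorphism $\bar d^E_{n+1}$), with the paper itself noting that in the pointed-with-kernels setting only the notation ($d_0$ versus $\coker(d_1)$) changes. Your deduction of the second statement from the first, via liftings of $1_X$ in both directions and comparison of $\psi\phi$ and $\phi\psi$ with the identities, is the standard formal argument the paper intends.
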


This result is of key importance in ensuring that the non-additive left-derived functors we consider in Section~\ref{Section Derived Functors} are well defined.

\begin{remark}[Extension to bounded below chain complexes]\label{rem - extension to bounded below}
	By renumbering, Definition~\ref{Def Approx Chain Homotopy} extends to bounded below chain complexes.
\end{remark}

\begin{remark}[Comparison to the abelian case]
	The equalities in \eqref{eq - def of hn for n=0 and general} are expressed in a non-additive context. In an abelian category, we recover the usual: a collection of morphisms $(h_n\colon C_n \to E_{n+1})_{n\in \NN}$ such that
	\begin{equation*}
		\begin{cases}
			d_1^E h_0 = (f_0-g_0)                    &                       \\
			d_{n+1}^E h_n= (f_n-g_n) - h_{n-1} d_n^C & \text{for $n\geq 1$}.
		\end{cases}
	\end{equation*}
\end{remark}

\subsection{Basic stability properties}
First of all, it is easy to see that the approximate homotopy relation is compatible with pre- and post-composition with chain morphisms. Indeed, consider a chain homotopy $h\colon f\simeq g\colon C\to E$. Lemma~\ref{lemma: distributivity for minus} implies that if $\alpha\colon A\to C$ is a chain morphism, then $(h_n D^{n+1}(\alpha_n))_n$ is an approximate homotopy from $f\alpha$ to $g\alpha$, while if $\beta\colon E\to B$ is a chain morphism, then $(\beta_{n+1}h_n)_n$ is an approximate homotopy from $\beta f$ to $\beta g$.

Lemma~\ref{lemma : f= g ssi f-g=0} implies that $0\colon f\simeq f$ for any chain morphism $f$, while $0\colon f\simeq g$ entails $f=g$ in any normal subtractive category. In particular, the homotopy relation is always reflexive. We next prove that is also symmetric. Here we need the following construction.

For any object $X$, we consider the \defn{twist morphism} $\tw_{D(X)}\colon D(X)\to D(X)$, the restriction of $\couniv{\iota_2}{\iota_1}$ to $D(X)$:
\[
	\xymatrix@!0@R=4em@C=6em{
	D(X)\ar[d]_-{\tw_{D(X)}} \ar@{{ |>}->}[r]^-{\delta_X} & X+ X \ar[r]^-{\nabla_X} \ar[d]^-{\couniv{\iota_2}{\iota_1}} & X  \ar@{=}[d] \\
	D(X) \ar@{{ |>}->}[r]_-{\delta_X}& X+X \ar[r]_-{\nabla_X}& X
	}
\]
It is natural, in the sense that for any morphism $l\colon X\to Y$,
\[
	D(l)\tw_{D(X)}=\tw_{D(Y)} D(l).
\]
Moreover, for any arrows $f$, $g\colon X\to Y$, the equation
\[
	(f-g)\tw_{D(X)}=\couniv{f}{g}\delta_X \tw_{D(X)} = \couniv{f}{g}\couniv{\iota_2}{\iota_1}\delta_X = \couniv{g}{f}\delta_X = g-f
\]
holds.

\begin{lemma}[Symmetry of the approximate homotopy relation]\label{lemma - symmetric relation}
	In any pointed category with kernels and binary coproducts, $f\simeq g$ implies $g \simeq f$.
\end{lemma}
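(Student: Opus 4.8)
\emph{Plan.} The idea is to produce the symmetric homotopy by an explicit formula built from the twist morphisms, so that the recursive construction need not be re-run; in particular no exactness, no projectivity and no Condition~\P\ will be used, which matches the generality of the statement. Given an approximate homotopy $h\colon f\simeq g$, that is a family $(h_n\colon D^{n+1}(C_n)\to E_{n+1})_{n\in\NN}$ satisfying Equations~\eqref{eq - def of hn for n=0 and general}, I would set
\[
	h'_n \defeq h_n\comp D^n(\tw_{D(C_n)})\colon D^{n+1}(C_n)\to E_{n+1}
\]
and claim that $h'\colon g\simeq f$. Here $D^n(\tw_{D(C_n)})$ is an endomorphism of $D^n(D(C_n))=D^{n+1}(C_n)$, so the composite type-checks; note that the twist is applied to the \emph{innermost} copy $D(C_n)$, which turns out to be the crucial choice.

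For the degree-zero equation the verification is immediate: $d_1^E h'_0 = d_1^E h_0\comp\tw_{D(C_0)} = (f_0-g_0)\tw_{D(C_0)} = g_0-f_0$, the last step being the identity $(f-g)\tw_{D(X)}=g-f$ recorded just above the statement. For $n\geq 1$ I would start from the right-hand side of the equation that $h'$ must satisfy, namely $(g_n-f_n)\varsigma^{n-1}_{D(C_n)}-h'_{n-1}D^n(d_n^C)$, and push all twists inward. First, $(g_n-f_n)=(f_n-g_n)\tw_{D(C_n)}$ together with naturality of $\varsigma^{n-1}\colon D^{n-1}\To 1_\C$ (Remark~\ref{rem : composition of varsigma_X}) gives $(g_n-f_n)\varsigma^{n-1}_{D(C_n)}=(f_n-g_n)\varsigma^{n-1}_{D(C_n)}\comp D^{n-1}(\tw_{D(C_n)})$. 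Second, substituting the definition of $h'_{n-1}$ and then using functoriality of $D^{n-1}$ and naturality of $\tw$ (with $l=d_n^C$) to commute the twist past $D^n(d_n^C)$ gives $h'_{n-1}D^n(d_n^C)=h_{n-1}D^n(d_n^C)\comp D^{n-1}(\tw_{D(C_n)})$. Since $(f_n-g_n)\varsigma^{n-1}_{D(C_n)}$ and $h_{n-1}D^n(d_n^C)$ are parallel morphisms out of $D^n(C_n)$, the second identity of Lemma~\ref{lemma: distributivity for minus} factors the common precomposition out of the approximate difference:
\[
	(g_n-f_n)\varsigma^{n-1}_{D(C_n)}-h'_{n-1}D^n(d_n^C)
	= \bigl((f_n-g_n)\varsigma^{n-1}_{D(C_n)}-h_{n-1}D^n(d_n^C)\bigr)\comp D^n(\tw_{D(C_n)}),
\]
and by the defining equation for $h_n$ the parenthesised factor is $d_{n+1}^E h_n$, so the right-hand side equals $d_{n+1}^E\bigl(h_n\comp D^n(\tw_{D(C_n)})\bigr)=d_{n+1}^E h'_n$, exactly as required. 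Hence $h'$ satisfies Equations~\eqref{eq - def of hn for n=0 and general} with the roles of $f$ and $g$ swapped, i.e.\ $h'\colon g\simeq f$.

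The only point demanding care is bookkeeping: the functor $D$ does not preserve approximate differences, so it genuinely matters \emph{at which} of the nested $D$-factors one twists, and one must keep the twist on the innermost factor $D(C_n)$ throughout the computation — this is precisely why $D^n(\tw_{D(C_n)})$, and not $\tw_{D^{n+1}(C_n)}$, is the correct correction term. Once the naturality squares for $\tw$ and for $\varsigma^{n-1}$ and the two distributivity identities of Lemma~\ref{lemma: distributivity for minus} are lined up in the order above, the verification is purely formal and there is no real obstacle.
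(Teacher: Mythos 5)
Your proposal is correct and follows essentially the same route as the paper: the symmetric homotopy is given by exactly the same formula $h'_n = h_n\comp D^n(\tw_{D(C_n)})$, verified with the same ingredients (the identity $(f-g)\tw_{D(X)}=g-f$, naturality of $\tw$ and of $\varsigma^{n-1}$, and the second distributivity identity of Lemma~\ref{lemma: distributivity for minus}), the only difference being that you run the computation from the target expression towards $d_{n+1}^E h'_n$ while the paper expands $d_{n+1}^E h'_n$ directly. This is a presentational difference only; the argument is sound.
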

\begin{proof}
	Consider $h\colon f\simeq g$. We construct an approximate homotopy from $g$ to $f$ by recursion. For the base step, we let $h'_0\coloneq h_0 \tw_{D(C_0)}$ so that
	\[
		d_1^Eh'_0  =(f_0-g_0)\tw_{D(C_0)}=g_0-f_0.
	\]
	For $n\geq 1$, we assume that $h'_{n-1}\coloneq h_{n-1} D^{n-1}(\tw_{D(C_{n-1})})$ is defined and satisfies
	\[
		d_{n}^E h_{n-1}'= (g_n-f_n)\varsigma^{n-1}_{D(C_n)} - h'_{n-2} D^{n-1}(d_{n-1}^C).
	\]
	If now $h'_n\coloneq h_nD^n(\tw_{D(C_n)})$, then
	\begin{align*}
		d_{n+1}^E h'_n & = \bigl( (f_n-g_n)\varsigma^{n-1}_{D(C_n)} - h_{n-1} D^n (d_n^C) \bigr)D^n(\tw_{D(C_n)})             \\
		               & = (f_n-g_n)\varsigma^{n-1}_{D(C_n)} D^{n-1}(\tw_{D(C_n)})- h_{n-1} D^n (d_n^C) D^{n-1}(\tw_{D(C_n)}) \\
		               & = (f_n-g_n) \tw_{D(C_n)} \varsigma^{n-1}_{D(C_n)}- h_{n-1} D^{n-1}\bigl( D(d_n^C) \tw_{D(C_n)}\bigr) \\
		               & =(g_n-f_n)\varsigma^{n-1}_{D(C_n)}- h_{n-1} D^{n-1}\left(\tw_{D(C_{n-1})}D(d_n^C)\right)             \\
		               & =(g_n-f_n)\varsigma^{n-1}_{D(C_n)}- h_{n-1} D^{n-1}(\tw_{D(C_{n-1})})D^n(d_n^C)                      \\
		               & =(g_n-f_n)\varsigma^{n-1}_{D(C_n)}- h_{n-1}'D^n(d_n^C).\qedhere
	\end{align*}
\end{proof}

\begin{remark}[Symmetry in the abelian case]
	It is well known that if $h\colon f\simeq g$, then $-h\colon g\simeq f$ in any abelian category. This agrees with what we do in the above proof, because here the morphism $\tw_{D(X)}$ is simply $-1_X$. To see this, it suffices to recall that $\delta_X=\univ{ 1_X}{-1_X}$.
\end{remark}

\begin{figure}[h]
	%\resizebox{\textwidth}{!}
	{\(
	\xymatrix@C=3pc @R=3pc{
	D(\Z_n(C)) \ar@/_2.5pc/[dd]_-{\Z_n(f)-\Z_n(g)} \ar@{-->}[d]|-{k_n^C} \ar[dr]|-{D(\kker (d_n^C))} \ar@/^/[drr]^- {0}& & \\
	\Z_n(D(C))\ar[d]|-{\Z_n(f-g)} \ar@{{ |>}->}[r]_-{\kker(D(d_n^C))} & D(C_n) \ar[r]_-{D(d_n^C)} \ar[d]|-{f_n-g_n}& D(C_{n+1})\ar[d]|-{f_{n-1}-g_{n-1}}\\
	\Z_n(E) \ar@{{ |>}->}[r]_-{\kker(d_n^E)}& E_n \ar[r]_-{d_n^E} & E_{n+1}}
	\)}
	\caption{Comparison between $\Z_n(f)-\Z_n(g)$ and $\Z_n (f-g)$} \label{fig - Z vs minus}
\end{figure}
\subsection{Approximate homotopy and homology}
If, in an abelian category, two chain morphisms $f$, $g\colon (C,d^C)\to (E,d^E)$ are homotopic, then $\H_n(f)=\H_n(g)$ for all $n$ (see for instance~\cite{Cartan-Eilenberg}). As recalled in Section~\ref{sec - derived functor ab case}, the construction of the left-derived functors crucially depends on this fact.

The usual (abelian) proof simplifies because the homology functors are additive. This is no longer true in a non-additive setting. The reason is that the kernel functor need not preserve differences. More precisely, in a pointed finitely complete category with binary coproducts, $\Z_n(f-g)$ and $\Z_n(f)-\Z_n(g)$ need not agree. As depicted in Figure~\ref{fig - Z vs minus}, there is a canonical comparison $k_n^C$, the unique morphism for which $D(\kker (d_n^C))= \kker (D(d_n^C)) k_n^C$. Hence
\[
	\Z_n(f-g)k_n^C  = \Z_n(f)-\Z_n(g).
\]
This morphism $k_n^C$ need not be an isomorphism in general. For this reason, the non-additive proof of the compatibility of chain homotopy with homology is slightly more involved.

\begin{theorem}\label{thm - approx homotopy and homology}
	If, in a normal subtractive category, $f\simeq g$, then $\H_n(f)=\H_n(g)$ for all $n\in \NN$.
\end{theorem}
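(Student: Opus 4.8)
The plan is to imitate the classical abelian argument---restrict the chain homotopy to the cycles---while paying for the non-additivity of the kernel functor with careful bookkeeping of the natural transformations $\varsigma^{k}$.

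Fix $n\in\NN$ and write $d^{E}_{n+1}=m^{E}_{n+1}e^{E}_{n+1}$ for the factorisation of $d^{E}_{n+1}$ as a regular epimorphism $e^{E}_{n+1}$ followed by a normal monomorphism $m^{E}_{n+1}$ with image $\Ima(d^{E}_{n+1})\rightarrowtail E_{n}$ (recall that $E$ is a proper complex). Since $\coker(\bar{d}^{C}_{n+1})\colon\Z_{n}(C)\to\H_{n}(C)$ is an epimorphism, it suffices to show that $\coker(\bar{d}^{E}_{n+1})\Z_{n}(f)=\coker(\bar{d}^{E}_{n+1})\Z_{n}(g)$. As $\C$ is normal, Lemma~\ref{lemma : f= g ssi f-g=0} turns this into $\bigl(\coker(\bar{d}^{E}_{n+1})\Z_{n}(f)\bigr)-\bigl(\coker(\bar{d}^{E}_{n+1})\Z_{n}(g)\bigr)=0$, and Lemma~\ref{lemma: distributivity for minus} rewrites the left-hand side as $\coker(\bar{d}^{E}_{n+1})\bigl(\Z_{n}(f)-\Z_{n}(g)\bigr)$. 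Because $\Ima(\bar{d}^{E}_{n+1})$ is the kernel of $\coker(\bar{d}^{E}_{n+1})$ (a normal monomorphism is the kernel of its cokernel; cf.~\cite{VdLinden:Doc}), the problem reduces to proving that the approximate difference $\Z_{n}(f)-\Z_{n}(g)\colon D(\Z_{n}(C))\to\Z_{n}(E)$ factors through $\Ima(\bar{d}^{E}_{n+1})\rightarrowtail\Z_{n}(E)$.

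Next I would record the ``cycle identity'' $\kker(d^{E}_{n})\bigl(\Z_{n}(f)-\Z_{n}(g)\bigr)=(f_{n}-g_{n})\,D(\kker(d^{C}_{n}))$, which follows by unwinding the definition of the approximate difference, using $\kker(d^{E}_{n})\Z_{n}(f)=f_{n}\kker(d^{C}_{n})$, $\kker(d^{E}_{n})\Z_{n}(g)=g_{n}\kker(d^{C}_{n})$ and the naturality square~\eqref{eq - D(h)} of $\delta$. As $\kker(d^{E}_{n})$ is monic and $\Ima(d^{E}_{n+1})$ sits inside $\Z_{n}(E)=\ker(d^{E}_{n})$, it is enough to show that $(f_{n}-g_{n})D(\kker(d^{C}_{n}))\colon D(\Z_{n}(C))\to E_{n}$ factors through $m^{E}_{n+1}$. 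For $n=0$ this is immediate, since $\Z_{0}(C)=C_{0}$, $\kker(d^{C}_{0})=1_{C_{0}}$ and $f_{0}-g_{0}=d^{E}_{1}h_{0}=m^{E}_{1}(e^{E}_{1}h_{0})$. For $n\geq1$ I would precompose the homotopy identity $d^{E}_{n+1}h_{n}=(f_{n}-g_{n})\varsigma^{n-1}_{D(C_{n})}-h_{n-1}D^{n}(d^{C}_{n})$ with $D^{n+1}(\kker(d^{C}_{n}))$. Pushing $D(\kker(d^{C}_{n}))$ through the $\delta$ that defines the outer approximate difference (again~\eqref{eq - D(h)}), killing the $h_{n-1}$-term via $D^{n}(d^{C}_{n}\kker(d^{C}_{n}))=D^{n}(0)=0$ (Lemma~\ref{lemma : D preserves 0}), rewriting $\phi-0$ as $\phi\,\varsigma_{\mathrm{dom}(\phi)}$ (Remark~\ref{rem:eqsub}), and applying naturality of $\varsigma^{n-1}$ together with $\varsigma^{n-1}_{D(\Z_{n}(C))}\varsigma_{D^{n}(\Z_{n}(C))}=\varsigma^{n}_{D(\Z_{n}(C))}$ (Remark~\ref{rem : composition of varsigma_X}), the precomposed identity collapses to
\[
(f_{n}-g_{n})\,D(\kker(d^{C}_{n}))\,\varsigma^{n}_{D(\Z_{n}(C))}=d^{E}_{n+1}\,h_{n}\,D^{n+1}(\kker(d^{C}_{n})).
\]
The right-hand side factors through $m^{E}_{n+1}$; and since $\C$ is subtractive and regular, $\varsigma^{n}_{D(\Z_{n}(C))}$ is a composite of regular epimorphisms, hence a regular epimorphism, so a kernel-pair cancellation against the monomorphism $m^{E}_{n+1}$ lets me delete $\varsigma^{n}_{D(\Z_{n}(C))}$ on the right and conclude that $(f_{n}-g_{n})D(\kker(d^{C}_{n}))$ factors through $m^{E}_{n+1}$, as wanted.

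The main obstacle is the collapse in the last step: conceptually it is just the abelian ``restrict the homotopy to the cycles'', but because the functor $D$ does not preserve approximate differences the restriction leaves behind the parasitic factor $\varsigma^{n}_{D(\Z_{n}(C))}$, and it is precisely subtractivity---that each $\varsigma_{X}$ is a regular epimorphism---that allows this factor to be cancelled. The one genuinely delicate point is to get the indices on the iterated functors $D^{k}$ and the transformations $\varsigma^{k}$ to match up through that computation.
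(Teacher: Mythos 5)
Your proposal is correct and follows essentially the same route as the paper: the same reduction to $\coker(\bar{d}^{E}_{n+1})(\Z_n(f)-\Z_n(g))=0$ via normality and Lemma~\ref{lemma: distributivity for minus}, and the same central computation yielding $(f_n-g_n)D(\kker(d^C_n))\,\varsigma^n_{D(\Z_n(C))}=d^E_{n+1}h_nD^{n+1}(\kker(d^C_n))$. The only (harmless) cosmetic difference is the final step: the paper simply cancels the regular epimorphism $\varsigma^n_{D(\Z_n(C))}$ after postcomposing with $\coker(\bar{d}^E_{n+1})$, whereas you first cancel it against the image monomorphism $m^E_{n+1}$ to obtain an actual factorization of $\Z_n(f)-\Z_n(g)$ through $\Ima(\bar d^E_{n+1})$, which uses properness of $E$ a bit more explicitly but is equally valid.
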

\begin{proof}\label{proof : homotopy vs homology}
	In a normal subtractive category $\C$, suppose that $h$ is a chain homotopy $f\simeq g\colon (C,d^C)\to (E,d^E)$. Consider $n\geq 1$. We observe that, by definition of $\H_n$, it suffices to show
	\[
		\H_n(f) \coker(\bar{d}^C_{n+1})= \H_n(g) \coker(\bar{d}^C_{n+1}).
	\]
	By the construction of the functors $\Z_n$ and $\H_n$ as in Diagram~\eqref{Definition of homology diagram}, this is equivalent to
	\[
		\coker(\bar{d}^E_{n+1}) \Z_n(f) = \coker(\bar{d}^E_{n+1}) \Z_n(g).
	\]
	Since $\C$ is normal, by Lemma~\ref{lemma : f= g ssi f-g=0} and Lemma~\ref{lemma: distributivity for minus}, it suffices that
	\[
		\coker(\bar{d}^E_{n+1}) \Z_n(f) - \coker(\bar{d}^E_{n+1}) \Z_n(g) = \coker(\bar{d}^E_{n+1}) \ (\Z_n(f)- \Z_n(g)) =  0.
	\]
	Since $\C$ is subtractive and by definition of $\varsigma^n$ (see Remark~\ref{rem : composition of varsigma_X}), the equality
	\[
		\coker(\bar{d}^E_{n+1}) \ (\Z_n(f)- \Z_n(g)) \varsigma^n_{D(\Z_n(C))} =  0
	\]
	is sufficient for this to happen.
	% since $\sigma^n_{D(\Z_n(C))}$ is a regular epimorphism.
	\begin{figure}
		%\resizebox{\textwidth}{!}
		{
		\(\xymatrix{
		C_{n+1} \ar@<.5ex>[rr]^-{f_{n+1}} \ar@<-1 ex>[rr]_-{g_{n+1}}  \ar[dd]_-{d_{n+1}^C}  \ar[dr]|-{\bar{d}_{n+1}^C} & & E_{n+1} \ar[dd]|(0.48)\hole|(0.56)\hole_-(0.2){d_{n+1}^E}  \ar[dr]|-{\bar{d}_{n+1}^E}   & D^{n+1}(C_n) \ar[l]_-{h_n} &   & D^{n+1}(\Z_n(C)) \ar@{ >>}[d]^-{\varsigma^n_{D(\Z_n(C))}} \ar[ll]_-{D^{n+1}(\kker (d_n ^C))} \\
		& \Z_{n}(C) \ar@<.5ex>[rr]^-(0.3){\Z_n(f)}  \ar@{{ |>}->}[dl]|-{\kker (d^C_n)} \ar@{-{ >>}}[dd]^-(0.7){\coker (\bar{d}_{n+1}^C)} \ar@<-1 ex>[rr]_-(0.3){\Z_n(g)}  & & \Z_n(E) \ar@{{ |>}->}[dl]|-{\kker (d^E_n)}  \ar@{-{ >>}}[dd]^-(0.7){\coker (\bar{d}_{n+1}^E)}  & & D(\Z_n (C)) \ar[ll]^-{\Z_n(f)-\Z_n(g)}\\
		C_n \ar@<.5ex>[rr]|\hole^-(0.3){f_{n}} \ar@<-1 ex>[rr]|\hole_-(0.3){g_{n}}  & & E_n & & &\\
		& \H_{n}(C)\ar@<.5ex>[rr]^-{\H_n(f)}  \ar@<-1 ex>[rr]_-{\H_{n}(g)}  & & \H_n(E) & & } \)
		}
		\caption{Homotopy vs.\ homology}\label{Fig Hom vs Hom}
	\end{figure}
	We prove that $(\Z_n(f)- \Z_n(g)) \varsigma^n_{D(\Z_n(C))} $ lifts over $\bar{d}_{n+1}^E$ as in Figure~\ref{Fig Hom vs Hom}:
	\begin{align*}
		 & \kker (d_n^E) \bar{d}_{n+1}^Eh_n D^{n+1}(\kker (d_n ^C))                                                                                           \\
		 & \equal d_n^Eh_n D^{n+1}(\kker (d_n ^C))                                                                                                            \\
		 & \overset{\text{def.\ $h_{n}$}}{\qeq} ((f_n-g_n)\varsigma^{n-1}_{D(C_n)} - h_{n-1}D^n(d_n^C))D^{n+1}(\kker (d_n ^C))                                \\
		 & \overset{\ref{lemma: distributivity for minus}}{\qeq} (f_n-g_n)\varsigma^{n-1}_{D(C_n)} D^n(\kker (d_n ^C)) - h_{n-1}D^n(d_n^C)D^n(\kker (d_n ^C)) \\
		 & \equal (f_n-g_n)\varsigma^{n-1}_{D(C_n)} D^n(\kker (d_n ^C)) - h_{n-1}D^n(d_n^C \kker (d_n ^C))                                                    \\
		 & \equal (f_n-g_n)\varsigma^{n-1}_{D(C_n)} D^n(\kker (d_n ^C)) - h_{n-1}D^n(0)                                                                       \\
		 & \overset{\ref{lemma : D preserves 0}}{\qeq} (f_n-g_n)\varsigma^{n-1}_{D(C_n)} D^n(\kker (d_n ^C)) - 0                                              \\
		 & \overset{\ref{rem:eqsub}}{\qeq} (f_n-g_n)\varsigma^{n-1}_{D(C_n)} D^n(\kker (d_n ^C)) (1_{D^n(\Z_n(C))} - 0)                                       \\
		 & \equal (f_n-g_n)\varsigma^{n-1}_{D(C_n)} D^n(\kker (d_n ^C)) \varsigma_{D^n(\Z_n(C))}                                                              \\
		 & \overset{\eqref{eq - compatibility sigma's and D's}}{\qeq} (f_n-g_n)D(\kker (d_n ^C)) \varsigma^{n-1}_{D(\Z_n(C))} \varsigma_{D^n(\Z_n(C))}        \\
		 & \overset{\ref{rem : composition of varsigma_X}}{\qeq} (f_n-g_n)D(\kker (d_n ^C)) \varsigma^{n}_{D(\Z_n(C))}                                        \\
		 & \overset{\ref{lemma: distributivity for minus}}{\qeq} (f_n \kker (d_n ^C)-g_n \kker (d_n ^C)) \varsigma^{n}_{D(\Z_n(C))}                           \\
		 & \overset{\text{def.\ $\Z_{n}$}}{\qeq} (\kker (d_n ^E)\Z_n(f)-\kker (d_n ^E)\Z_n(g)) \varsigma^{n}_{D(\Z_n(C))}                                     \\
		 & \overset{\ref{lemma: distributivity for minus}}{\qeq}\kker (d_n ^E) (\Z_n(f)-\Z_n(g)) \varsigma^{n}_{D(\Z_n(C))}.
	\end{align*}
	Since $\kker (d_n ^E)$ is monic, this implies that
	\[
		\bar{d}_{n+1}^Eh_n D^{n+1}(\kker (d_n ^C)) = (\Z_n(f)-\Z_n(g)) \varsigma^{n}_{D(\Z_n(C))}.
	\]

	To complete the proof, we still have to verify the case $n=0$. Here we can use essentially the same strategy as above. We have:
	\begin{align*}
		\kker(\eta^E)\bar{d}_{1}^Eh_0D(\kker (\eta^C)) & =d_1^Eh_0D(\kker (\eta^C))               \\
		                                               & = (f_0-g_0)D(\kker (\eta^C))             \\
		                                               & =f_0 \kker (\eta^C) - g_0 \kker (\eta^C) \\
		                                               & =\kker (\eta^E) (\Z_0(f)-\Z_0(g)).
	\end{align*}
	Since $\kker (\eta^E)$ is monic, we have $\bar{d}_{1}^E h_0 D(\kker(\eta^C)) = \Z_0(f)-\Z_0(g)$, which proves that ${\Z_0(f)-\Z_0(g)}$ lifts over $\bar{d}_{1}^E$.
\end{proof}

\section{Functors preserving approximate homotopies}\label{sec - discussion functors}
In the abelian context, \emph{additive} functors (see Section~\ref{sec - derived functor ab case}) are the ones which admit derived functors, essentially because being compatible with the addition in the hom-sets, they preserve chain homotopies. The aim of this section is to find convenient conditions on functors so that they preserve \emph{approximate} homotopies. This will allow us to define non-additive derived functors in Section~\ref{Section Derived Functors}.

\subsection{Subtractive functors}\label{sec - subtractive functors}
A first idea is to replace the condition that
\[
	F(f+g)=F(f)+F(g) \qquad\text{by}\qquad F(f-g)=F(f)-F(g).
\]
We will call such a functor \emph{subtractive}. Formally, this amounts to the following:

\begin{definition}[Subtractive functor]\label{def - subtractive functors}
	Let $F\colon \C\to \E$ be a functor between pointed categories with kernels where, in addition, $\C$ admits binary coproducts. We say that $F$ is a \defn{subtractive functor} if it preserves finite coproducts and $F(\delta_X)=\delta_{F(X)}$ for all $X\in \C$.
\end{definition}

\begin{remark}[Equivalent characterization of subtractive functors]\label{rem - subtractive functor via SSES}
	When both $\C$ and $\E$ are normal, this is equivalent to asking that binary coproducts and split short exact sequences of the form
	\[
		\xymatrix{0 \ar[r] & D(X) \ar@{{ |>}->}[r]^-{\delta_X} & X+X \ar@{-{ >>}}@<.5ex>[r]^-{\nabla_X} & X \ar[r] \ar@<.5ex>[l]^-{\iota_2} & 0}
	\]
	are preserved.
\end{remark}

This is not quite enough, though, for our purposes, because a problem occurs which is non-existent in the abelian case: we need preservation of proper morphisms, in order to guarantee that the derived functor will detect exact in the codomain of~$F$---something which is not implied by subtractivity. Combining these ingredients, we find:

\begin{proposition}\label{prop - homology VS functor - sub functor}
	Let $\C$ be a pointed category with kernels and binary coproducts. Let $\E$ be a normal subtractive category, and let $F\colon \C \to \E$ be a subtractive functor which preserves proper morphisms. If $f$, $g\colon (C,d^C)\to (E,d^E)$ are chain homotopic in $\C$, then $\H_n(F(f))=\H_n(F(g))$ for all $n$.
\end{proposition}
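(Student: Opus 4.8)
The plan is to show that a subtractive functor sends approximate chain homotopies to approximate chain homotopies, and then to invoke Theorem~\ref{thm - approx homotopy and homology} inside the codomain category~$\E$, which is normal and subtractive by hypothesis. The assumption that $F$ preserves proper morphisms does not enter the homotopy argument itself; it is what guarantees that $F$ carries proper chain complexes to proper chain complexes, so that the homology objects $\H_n(F(f))$ and $\H_n(F(g))$ are actually defined. Granting this, it suffices to produce, from a given approximate homotopy $h\colon f\simeq g$ in $\C$, an approximate homotopy $F(f)\simeq F(g)$ in $\E$; the conclusion then follows from Theorem~\ref{thm - approx homotopy and homology}.

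First I would record how a subtractive functor $F$ interacts with the approximate-difference machinery of Section~\ref{sec - approximate difference}. Preservation of binary coproducts together with the identity $F(\delta_X)=\delta_{F(X)}$ forces, on objects, $F(D(X))=D(F(X))$; and since $\delta_{F(X)}=\ker(\nabla_{F(X)})=\ker(F(\nabla_X))$, the functor $F$ preserves the specific kernel defining $D$, so by the universal property displayed in~\eqref{eq - D(h)} we get $F(D(x))=D(F(x))$ for every morphism $x$. By induction, $F\comp D^{n}=D^{n}\comp F$ for all $n$, both on objects and on morphisms. In the same way,
\[
	F(\varsigma_X)=F(\couniv{1_X}{0})\,F(\delta_X)=\couniv{1_{F(X)}}{0}\,\delta_{F(X)}=\varsigma_{F(X)},
\]
and hence $F(\varsigma^{n}_X)=\varsigma^{n}_{F(X)}$ for all $n$, using the description~\eqref{eq :equivalence def compos sigma} of $\varsigma^n$ as an iterated composite. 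Finally, for any parallel pair $f$, $g$ one has $F(f-g)=F(\couniv{f}{g})\,F(\delta_X)=\couniv{F(f)}{F(g)}\,\delta_{F(X)}=F(f)-F(g)$. (We also use that $F$, being a coproduct-preserving functor between pointed categories, preserves the zero object and hence zero morphisms; consequently $F$ induces a functor $\Ch(\C)\to\Ch(\E)$, which clearly sends positively graded complexes to positively graded complexes.)

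Next I would simply apply $F$ to the defining equations~\eqref{eq - def of hn for n=0 and general} of the homotopy $h\colon f\simeq g$. By the previous paragraph, $F(h_n)$ is a morphism $D^{n+1}(F(C_n))\to F(E_{n+1})$, so the family $(F(h_n))_{n}$ has the right type to be a homotopy $F(f)\simeq F(g)$. Applying $F$ to $d_1^Eh_0=f_0-g_0$ gives $F(d_1^E)\,F(h_0)=F(f_0)-F(g_0)$, which is the degree-zero equation for $F(f)$, $F(g)$. Applying $F$ to the equation for $n\geq1$ and distributing over the composites and over the outermost approximate difference by means of the compatibilities just established yields
\[
	F(d_{n+1}^E)\,F(h_n)=\bigl(F(f_n)-F(g_n)\bigr)\,\varsigma^{n-1}_{D(F(C_n))}-F(h_{n-1})\,D^{n}(F(d_n^C)),
\]
which is precisely equation~\eqref{eq - def of hn for n=0 and general} for the pair $F(f)$, $F(g)$. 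Hence $(F(h_n))_n$ is an approximate homotopy $F(f)\simeq F(g)$, and Theorem~\ref{thm - approx homotopy and homology}, applicable because $\E$ is normal and subtractive, gives $\H_n(F(f))=\H_n(F(g))$ for every $n$.

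I expect the only slightly delicate point to be the passage, in the second paragraph, from the single constraint $F(\delta_X)=\delta_{F(X)}$ built into the definition of a subtractive functor to the statements $F\comp D^{n}=D^{n}\comp F$ and $F(\varsigma^{n})=\varsigma^{n}$ for all iterates: the homotopy equations are phrased in terms of $D^{n+1}$ and $\varsigma^{n-1}$ rather than $D$ and $\varsigma$, so these iterated compatibilities must be set up by induction before the equations can be transported. Everything after that is routine functoriality, and no genuine obstacle remains.
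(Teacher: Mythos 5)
Your proposal is correct and follows exactly the paper's route: the paper's proof simply asserts that a subtractive functor preserves approximate chain homotopies (so $F(h)\colon F(f)\simeq F(g)$) and then applies Theorem~\ref{thm - approx homotopy and homology} in $\E$. You merely spell out the implicit verifications (that $F$ commutes with $D^n$, $\varsigma^n$ and approximate differences, and that proper-morphism preservation ensures the homologies are defined), which is a faithful elaboration of the same argument.
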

\begin{proof}
	As for additive functors in the additive context, any subtractive functor will preserve approximate chain homotopies: if $h\colon f\simeq g$, then $F(h)\colon F(f)\simeq F(g)$. The result follows from Theorem~\ref{thm - approx homotopy and homology} applied to the category $\E$.
\end{proof}

\subsection{Protoadditive functors preserving coproducts}
We now consider a convenient class of subtractive functors: the \emph{protoadditive functors} of T.~Everaert and M.~Gran~\cite{Everaert-Gran-TT,EG-honfg}.

\begin{definition}\label{def protoadditive functor}
	A functor $F\colon \C \to \E$ between pointed categories $\C$ and $\E$ is \defn{protoadditive} if it preserves kernels of split epimorphisms.
\end{definition}

\begin{remark}
	From the perspective of Definition~\ref{def protoadditive functor}, protoadditivity of a functor appears to be a weak kind of left exactness property. When $\C$ and $\E$ are normal categories, it is equivalent to the condition that $F$ preserves split short exact sequences: for any split short exact sequence
	\[
		\xymatrix{0 \ar[r] & K \ar@{{ |>}->}[r]^-k & X \ar@{-{ >>}}@<.5ex>[r]^-{f} & Y \ar[r] \ar@{{ >}->}@<.5ex>[l]^-{s} & 0}
	\]
	in $\C$, the image
	\[
		\xymatrix{0 \ar[r] & F(K) \ar@{{ |>}->}[r]^-{F(k)} & F(X) \ar@{-{ >>}}@<.5ex>[r]^-{F(f)} & F(Y) \ar[r] \ar@{{ >}->}@<.5ex>[l]^-{F(s)} & 0}
	\]
	by $F$ is a split short exact sequence in $\E$.

	Once $\C$ and $\E$ are abelian categories, any split short exact sequences is canonically induced by a biproduct. This implies that a functor between abelian categories is protoadditive if and only if it is additive---so that this left exactness aspect becomes invisible.
\end{remark}

The articles~\cite{Everaert-Gran-TT,EG-honfg} provide a list of examples: the reflector of commutative rings to reduced commutative rings, for instance, or any reflector from a category of topological semi-abelian algebras to Hausdorff algebras over the same theory. In Section~\ref{Sec:CrossedModules}, we focus on one in particular: the \emph{connected components} functor in the context of crossed modules.

\begin{lemma}
	Any binary coproduct-preserving protoadditive functor ${F\colon \C\to \E}$ between pointed categories with kernels $\C$ and $\E$, where $\C$ admits binary coproducts is a subtractive functor.\noproof
\end{lemma}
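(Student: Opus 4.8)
The plan is to verify directly the two defining conditions of a subtractive functor in Definition~\ref{def - subtractive functors}: that $F$ preserves finite coproducts, and that $F(\delta_X)=\delta_{F(X)}$ for every object $X$ of $\C$.

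I would first dispatch the coproducts. By hypothesis $F$ preserves binary coproducts, so it only remains to check preservation of the empty coproduct, which in a pointed category is the zero object $0$. Here protoadditivity does the job: the identity $1_0\colon 0\to 0$ is a split epimorphism whose kernel is $0$ itself, so $F$ sends this kernel to $\ker(F(1_0))=\ker(1_{F(0)})$; since the kernel of an identity morphism is the zero subobject, this forces $F(0)\cong 0$. Hence $F$ preserves finite coproducts.

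For the equality $F(\delta_X)=\delta_{F(X)}$, the key observation is that the codiagonal $\nabla_X=\couniv{1_X}{1_X}\colon X+X\to X$ is a split epimorphism --- it is split by the coproduct injection $\iota_1$ (equally by $\iota_2$) --- and that, by definition, $\delta_X=\ker(\nabla_X)$. Protoadditivity then yields $F(\delta_X)=\ker(F(\nabla_X))$. Since $F$ preserves binary coproducts, the canonical comparison $F(X+X)\to F(X)+F(X)$ is an isomorphism carrying $F(\iota_i)$ to $\iota_i$; under this identification $F(\nabla_X)=F(\couniv{1_X}{1_X})$ becomes $\couniv{1_{F(X)}}{1_{F(X)}}=\nabla_{F(X)}$. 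Therefore $\ker(F(\nabla_X))$ is identified with $\ker(\nabla_{F(X)})=\delta_{F(X)}$, which --- understood, as throughout, up to the canonical coproduct-comparison isomorphism and the comparison $F(D(X))\cong D(F(X))$ it induces on the level of kernels --- is precisely the claimed equality.

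I do not anticipate a genuine obstacle: the argument is bookkeeping with the canonical identifications supplied by coproduct-preservation, drawing on the single structural input that a protoadditive functor takes the kernel of the split epimorphism $\nabla_X$ to the kernel of its image. The only point needing a moment's care is to recall that ``preserving finite coproducts'' includes the empty one, which is exactly what the zero-object computation above provides.
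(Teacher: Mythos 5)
Your argument is correct and is exactly the one the paper leaves implicit (the lemma is stated with no proof): $\nabla_X$ is a split epimorphism with kernel $\delta_X$, so protoadditivity plus preservation of binary coproducts identifies $F(\delta_X)$ with $\delta_{F(X)}$ up to the canonical comparisons, and your zero-object computation correctly supplies the empty coproduct needed for ``finite coproducts'' in Definition~\ref{def - subtractive functors}. This is also precisely the content of Remark~\ref{rem - subtractive functor via SSES} in the normal case, so no further comment is needed.
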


\begin{proposition}\label{prop - homology VS functor - protoadd with bin coproducts pres functor}
	Let $\C$ a pointed category with kernels and binary coproducts. Let~$\E$ be a normal subtractive category, and let $F\colon \C \to \E$ be a protoadditive functor which preserves binary coproducts and proper morphisms. If $f$, $g\colon (C,d^C)\to (E,d^E)$ are chain homotopic in $\C$ then $\H_n(F(f))=\H_n(F(g))$ for all~$n$. \noproof
\end{proposition}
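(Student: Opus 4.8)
The plan is to observe that this proposition is nothing but the combination of the preceding lemma with Proposition~\ref{prop - homology VS functor - sub functor}; almost no new work is required, and the only genuine content has already been established in Theorem~\ref{thm - approx homotopy and homology} and in Section~\ref{sec - subtractive functors}.

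First I would recall, from the lemma immediately preceding the statement, that a protoadditive functor $F\colon \C\to\E$ which preserves binary coproducts is automatically a subtractive functor in the sense of Definition~\ref{def - subtractive functors}. For completeness one checks this directly: the codiagonal $\nabla_X=\couniv{1_X}{1_X}\colon X+X\to X$ is a split epimorphism, split by the coproduct injection $\iota_1$, and by definition $\delta_X=\kker(\nabla_X)$. Since $F$ preserves binary coproducts, the canonical comparison $F(X+X)\to F(X)+F(X)$ is an isomorphism, and since $\nabla_X$, $\iota_1$ and $\iota_2$ are built purely from the universal property of the coproduct, this isomorphism identifies $F(\nabla_X)$ with $\nabla_{F(X)}$ and $F(\iota_i)$ with $\iota_i$. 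As $F$ preserves kernels of split epimorphisms (Definition~\ref{def protoadditive functor}), it follows that $F(\delta_X)$ is a kernel of $\nabla_{F(X)}$, hence $F(\delta_X)=\delta_{F(X)}$ up to this canonical isomorphism; that is, $F$ is subtractive.

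With this in hand, all the hypotheses of Proposition~\ref{prop - homology VS functor - sub functor} are satisfied: $\C$ is a pointed category with kernels and binary coproducts, $\E$ is a normal subtractive category, and $F$ is a subtractive functor which preserves proper morphisms. Applying that proposition to the chain homotopic pair $f$, $g\colon(C,d^C)\to(E,d^E)$ gives $\H_n(F(f))=\H_n(F(g))$ for all $n$, which is the desired conclusion. Concretely, the mechanism is: $h\colon f\simeq g$ entails $F(h)\colon F(f)\simeq F(g)$ because $F$ preserves binary coproducts, the difference objects $D^{k}(C_n)$ and the maps $\varsigma^k_{D(C_n)}$ (all of which are assembled from coproducts, codiagonals and kernels of split epimorphisms), and hence carries Equations~\eqref{eq - def of hn for n=0 and general} for $h$ to the corresponding equations for $F(h)$; Theorem~\ref{thm - approx homotopy and homology}, applied in the normal subtractive category $\E$ (using that $F$ preserves proper morphisms so that $F(C)$ and $F(E)$ are proper complexes and their homology is defined), then yields the equality of induced maps in homology.

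The only point requiring any care is the bookkeeping with the coherence isomorphism $F(X+X)\cong F(X)+F(X)$ and its compatibility with the structure maps $\nabla$, $\iota_i$, $\delta$, and $\tw$; this is a routine naturality check and is not a real obstacle. In short, there is no hard step here: the statement is a repackaging of the subtractive-functor case under the more familiar hypothesis of protoadditivity, exactly as in the abelian setting where ``additive'' and ``protoadditive'' coincide.
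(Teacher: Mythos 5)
Your proposal is correct and follows exactly the route the paper intends: the preceding lemma says a binary-coproduct-preserving protoadditive functor is subtractive, and then Proposition~\ref{prop - homology VS functor - sub functor} (via preservation of approximate homotopies and Theorem~\ref{thm - approx homotopy and homology} in $\E$) gives the conclusion, which is why the paper states the result with no proof. Your extra verification that $F(\delta_X)=\delta_{F(X)}$ up to the coherence isomorphism is just the content of that unproved lemma, spelled out.
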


\section{Derived functors}\label{Section Derived Functors}

We arrive at the main purpose of this article: to define derived functors in a non-additive context. In a pointed regular category $\C$ with binary coproducts and enough projectives which satisfies Condition \P, by Theorem~\ref{thm - projective resolution unique up to approx homoto}, any two projective resolutions of the same object $X\in \C$ are approximately chain homotopically equivalent. We let $\E$ be a homological category with binary coproducts, so that any two chain homotopic morphisms in it have the same homology. If we now consider a subtractive functor $F\colon \C\to \E$ which preserves proper morphisms as in Section~\ref{sec - discussion functors}, then for any $n\in \mathbb{Z}$ we may write
\[
	\Left_n(F)(X)\coloneq \H_n(F(C(X))).
\]
As explained in Section~\ref{sec - discussion functors}, the assumptions on the categories and on $F$ imply that we recover the essence of all three points valid in the abelian setting, mentioned in Theorem~\ref{Facts}. And indeed, together with the obvious extension to morphisms in~$\C$, the above procedure defines a functor $\Left_n(F)\colon \C\to \E$.

\begin{theorem}\label{Thm Def L}
	Let $\C$ be a pointed regular category with binary coproducts and with enough projectives which satisfies Condition \P. Let $\E$ be a normal subtractive category, and let $F\colon \C \to \E$ be a subtractive functor which preserves proper morphisms. Then $\Left_n(F)\colon \C\to \E$ is a functor for each $n\in \NN$.
\end{theorem}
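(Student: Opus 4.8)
The plan is to verify the three requirements for $\Left_n(F)$ to be a functor: that $\H_n(F(C(X)))$ is a well-defined object of~$\E$ for each object~$X$ (together with its chosen resolution $C(X)$), that each morphism $x\colon X\to Y$ receives a well-defined image, and that identities and composites are preserved. Nearly everything needed has been established already; the task is to assemble it.

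First I would check that $F(C(X))$ is a proper chain complex in~$\E$, so that homology objects are available there. Being subtractive, $F$ preserves finite coproducts, hence the zero object, so it is zero-preserving and sends the chain complex $C(X)$ to a chain complex $F(C(X))$ in~$\E$ (Subsection~\ref{subsec : chain complexes}). In a projective resolution each differential factors as $d^{C}_{n}=\kker(d^{C}_{n-1})\comp\bar{d}^{C}_{n}$, a regular epimorphism followed by a normal monomorphism, hence is a proper morphism; since $F$ preserves proper morphisms, each $F(d^{C}_{n})$ is proper. Therefore $F(C(X))$ is a proper chain complex in the normal category~$\E$, so the homology objects $\H_n(F(C(X)))$ are defined, and $\H_n$ is functorial on chain morphisms between proper chain complexes of~$\E$ (Subsection~\ref{subsec - exact complexes, proper complexes and homology}). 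We set $\Left_n(F)(X)\coloneq\H_n(F(C(X)))$.

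Next I would define $\Left_n(F)$ on a morphism $x\colon X\to Y$: by Lemma~\ref{Lemma Existence Lifting} pick a lifting $f\colon C(X)\to C(Y)$ of~$x$ and put $\Left_n(F)(x)\coloneq\H_n(F(f))$, which makes sense by the previous paragraph applied to the chain morphism $F(f)\colon F(C(X))\to F(C(Y))$ between proper complexes. This is independent of the chosen lifting: two liftings $f$, $f'$ of~$x$ between the fixed resolutions $C(X)$, $C(Y)$ are approximately chain homotopic by Theorem~\ref{thm - projective resolution unique up to approx homoto}; a subtractive functor preserves approximate homotopies (as noted in the proof of Proposition~\ref{prop - homology VS functor - sub functor}), so $F(f)\simeq F(f')$, and Proposition~\ref{prop - homology VS functor - sub functor} then gives $\H_n(F(f))=\H_n(F(f'))$. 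Functoriality follows at once: $1_{C(X)}$ is a lifting of~$1_X$, so $\Left_n(F)(1_X)=\H_n(F(1_{C(X)}))=1_{\Left_n(F)(X)}$; and if $f$ lifts $x\colon X\to Y$ and $g$ lifts $y\colon Y\to Z$, then $g\comp f$ is a lifting of $y\comp x$ (chase the defining squares as in Figure~\ref{Chain morphism over f}), so $\Left_n(F)(y\comp x)=\H_n(F(g\comp f))=\H_n(F(g))\comp\H_n(F(f))=\Left_n(F)(y)\comp\Left_n(F)(x)$ by functoriality of~$F$ and of~$\H_n$.

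I do not anticipate a genuine obstacle here: the two places where the hypotheses on~$F$ actually bite are that $F$ must send the exact (hence proper) resolution $C(X)$ to a \emph{proper} complex in~$\E$, which is exactly why ``preserves proper morphisms'' appears in the statement, and that $F$ must preserve approximate homotopies, which is the mechanism behind Proposition~\ref{prop - homology VS functor - sub functor}. As a complement, rerunning the independence argument with a chain homotopy \emph{equivalence} between two projective resolutions of~$X$ in place of a single lifting, and using the second half of Theorem~\ref{thm - projective resolution unique up to approx homoto}, shows that a different choice of resolution produces a canonically isomorphic $\Left_n(F)(X)$; so the functor is independent of the chosen resolutions up to natural isomorphism, exactly as in the abelian situation recalled in Theorem~\ref{Facts}.
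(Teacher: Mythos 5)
Your proposal is correct and follows essentially the same route as the paper's own proof: well-definedness and functoriality are reduced to Theorem~\ref{thm - projective resolution unique up to approx homoto} (liftings and resolutions unique up to approximate homotopy) together with Proposition~\ref{prop - homology VS functor - sub functor} (homotopic chain morphisms have equal homology after applying $F$). The only difference is that you spell out details the paper delegates to the surrounding discussion, such as the check that $F(C(X))$ is a proper chain complex in the normal category~$\E$, which the paper handles in the paragraph preceding the theorem.
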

\begin{proof}
	We already explained that up to isomorphism, this procedure is independent of chosen resolutions and liftings. It is clear that identities are preserved. Composites are preserved as well: if $f\colon X\to Y$, $g\colon Y\to Z$ and $gf\colon X\to Z$ lift to $C(f)\colon C(X)\to C(Y)$, $C(g)\colon C(Y)\to C(Z)$ and $C(gf)\colon C(X)\to C(Z)$, respectively, then both $C(g)\comp C(f)$ and $C(g\comp f)$ are liftings of $gf$, which makes them chain homotopic by Theorem~\ref{thm - projective resolution unique up to approx homoto}. Hence, $\Left_n(g)\Left_n(f)=\Left_n(gf)$ by Proposition~\ref{prop - homology VS functor - sub functor}.
\end{proof}

\begin{definition}\label{Def L}
	The functor $\Left_n(F)\colon \C\to \E$ is called the \defn{$n$th (non-additive) left derived functor} of $F$.
\end{definition}

Of course, in the abelian setting, this agrees with the classical definition. We always have:

\begin{proposition}\label{Prop Derived On Projective Object}
	Let $F$ be any functor as in Theorem~\ref{Thm Def L}. If $P$ is a projective object of $\C$, then $\Left_0(F)(P)=F(P)$ and $\Left_n(F)(P)=0$ for all $n\geq 1$.
\end{proposition}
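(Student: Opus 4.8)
The plan is to evaluate everything on the trivial resolution. By Remark~\ref{rem - projective resol when X proj}, since $P$ is projective it admits the projective resolution $C$ with $C_0=P$, with $C_n=0$ for $n\neq 0$, and with all differentials zero. By Theorem~\ref{thm - projective resolution unique up to approx homoto} together with Proposition~\ref{prop - homology VS functor - sub functor}---this is precisely the resolution-independence of $\H_n(F(C(X)))$ explained just before Theorem~\ref{Thm Def L}---we may compute $\Left_n(F)(P)$ as $\H_n(F(C))$ using this particular $C$.

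Next I would observe that $F$ preserves the zero object: being subtractive it preserves finite coproducts, in particular the empty one, which is the zero object. Hence $F(C)$ is the chain complex with $F(P)$ in degree $0$, zero objects in all other degrees, and zero differentials. In a pointed regular category every zero morphism is proper---it factors as the regular epimorphism $A\twoheadrightarrow 0$ followed by the normal monomorphism $0\rightarrowtail B$, the latter being the kernel of $1_B$---so $F(C)$ is a proper chain complex and its homology is defined.

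It then remains to read off this homology. In degree $0$ we have $\Z_0(F(C))=\kker(F(P)\to 0)=F(P)$ and $\bar{d}_1$ is the zero map $0\to F(P)$, so $\H_0(F(C))=\coker(0\to F(P))=F(P)$; hence $\Left_0(F)(P)=F(P)$. For $n\geq 1$ we have $\Z_n(F(C))=0$, being the kernel of a morphism out of $0$, so $\H_n(F(C))=\coker(0\to 0)=0$, that is $\Left_n(F)(P)=0$. One also checks that these identifications are natural in $P$, so that $\Left_0(F)$ restricted to projectives agrees with $F$.

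There is no genuine obstacle here: the only points worth a word of justification are that $F$ preserves the zero object (so that $F(C)$ is again an honest, and in fact proper, chain complex with the expected homology) and the appeal to resolution-independence, both of which are immediate from results already established.
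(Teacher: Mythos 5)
Your proof is correct and follows essentially the same route as the paper: take the trivial resolution of $P$ from Remark~\ref{rem - projective resol when X proj}, apply $F$, and read off the homology as $F(P)$ in degree $0$ and $0$ elsewhere. The extra checks you make (that $F$ preserves the zero object and that the resulting complex is proper) are correct and harmless elaborations of what the paper leaves implicit.
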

\begin{proof}
	Remark~\ref{rem - projective resol when X proj} tells us that the chain complex which is $P$ in degree zero and~$0$ elsewhere is a projective resolution of $P$. The functor $F$ applied to it gives a chain complex whose homology is $F(P)$ in degree zero and $0$ elsewhere.
\end{proof}

Protoadditive functors often lead to protoadditive derived functors:

\begin{proposition}\label{Left derived of protoadditive functor}
	In the situation of Theorem~\ref{Thm Def L}, suppose moreover that $F$ is a protoadditive functor between homological categories. Then for each $n\in\NN$, the $n$th left derived functor $\Left_n(F)\colon \C\to \E$ is protoadditive as well.
\end{proposition}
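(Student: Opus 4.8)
The plan is to prove protoadditivity of $\Left_n(F)$ in its ``split short exact sequence'' form: I will show that $\Left_n(F)$ sends a split short exact sequence in $\C$ to a split short exact sequence in $\E$, which, for a functor between normal categories, is equivalent to preserving kernels of split epimorphisms (the remark following Definition~\ref{def protoadditive functor}). So fix a split short exact sequence $0\to K\xrightarrow{k}X\xrightarrow{f}Y\to 0$ in $\C$ with section $s$ of $f$, so that $k=\ker(f)$. Since $\C$ is homological with enough projectives and satisfies Condition \P, Proposition~\ref{proposition:horseshoe lemma semiab}, applied to this sequence (taking the vertical sequence of Figure~\ref{fig - Setting for horseshoe} to be $0\to K\to X\to Y\to 0$ and the horizontal one a projective resolution of $Y$), together with Remark~\ref{Remark Horseshoe Lemma SSES}, produces projective resolutions $C(K)$, $A(X)$, $E(Y)$ and a short exact sequence of chain complexes $0\to C(K)\xrightarrow{\alpha}A(X)\xrightarrow{\beta}E(Y)\to 0$ that is degreewise split and carries a chain-level section $\gamma\colon E(Y)\to A(X)$ of $\beta$ induced by $s$. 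By construction $\beta$ covers $f$ and $\alpha$ covers $k$, so by Theorem~\ref{Thm Def L} and the homotopy invariance of Theorem~\ref{thm - projective resolution unique up to approx homoto}, up to the canonical comparison isomorphisms we have $\Left_n(F)(f)=\H_n(F(\beta))$ and $\Left_n(F)(k)=\H_n(F(\alpha))$.

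Next I apply $F$. Since $\C$ and $\E$ are homological, hence normal, and $F$ is protoadditive, $F$ preserves split short exact sequences; applying this in each degree, and noting that the induced functor $F\colon\Ch(\C)\to\Ch(\E)$ sends the chain map $\gamma$ to a chain-level section $F(\gamma)$ of $F(\beta)$, I obtain a short exact sequence of chain complexes $0\to F(C(K))\xrightarrow{F(\alpha)}F(A(X))\xrightarrow{F(\beta)}F(E(Y))\to 0$ in $\E$ that is again degreewise split and chain-level split. As the three resolutions are exact in positive degrees and $F$ preserves proper morphisms, the three complexes $F(C(K))$, $F(A(X))$, $F(E(Y))$ are proper, so their homology objects are defined and compute $\Left_\bullet(F)$ of $K$, $X$ and $Y$, respectively.

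It then remains to pass to homology. The short exact sequence of proper chain complexes just constructed gives rise to a long exact homology sequence in the homological category $\E$. Since $F(\beta)$ is split, each $\H_n(F(\beta))=\Left_n(F)(f)$ is a split, hence normal, epimorphism; by exactness of the long sequence at $\H_n(F(E(Y)))$ the connecting morphism out of that object is annihilated by an epimorphism and therefore vanishes. Consequently the long exact sequence degenerates into split short exact sequences
\[
0\to\Left_n(F)(K)\xrightarrow{\Left_n(F)(k)}\Left_n(F)(X)\xrightarrow{\Left_n(F)(f)}\Left_n(F)(Y)\to 0,
\]
split by $\H_n(F(\gamma))$. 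In particular $\Left_n(F)(k)$ represents the kernel of $\Left_n(F)(f)$, so $\Left_n(F)$ preserves kernels of split epimorphisms, i.e.\ it is protoadditive. (For $n<0$ the derived functor is zero and the claim is trivial.)

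The main obstacle is this last step: one must be sure that a degreewise-split short exact sequence of proper chain complexes in a merely homological category does yield a long exact homology sequence, and that the diagram chase used to split it (vanishing of the connecting maps, and reconstruction of a short exact sequence from the resulting exactness data) is legitimate in this non-additive setting. This can either be quoted from the homological algebra of chain complexes over homological categories underlying the long exact sequence of derived functors (Section~\ref{sec - syzygy}; cf.\ \cite{EverVdL2,VdLinden:Doc}), or established by hand: apply the kernel functors $\Z_n$ and the corresponding image functors $\Ima(d_{n+1}^{\bullet})$ to the split short exact sequence of complexes to obtain a morphism of split short exact sequences, and then invoke the $(3\times 3)$-Lemma of~\cite{Bourn2001} to conclude that the induced sequence of cokernels---which is exactly $0\to\H_n(F(C(K)))\to\H_n(F(A(X)))\to\H_n(F(E(Y)))\to 0$---is short exact (and split).
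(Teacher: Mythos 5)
Your proposal is correct and follows essentially the same route as the paper: apply Proposition~\ref{proposition:horseshoe lemma semiab} with Remark~\ref{Remark Horseshoe Lemma SSES} to get a split short exact sequence of resolutions, use that $F$ preserves split short exact sequences and proper morphisms, and then invoke the long exact homology sequence in the homological category $\E$ (\cite[Theorem 4.5.7]{Borceux-Bourn}), which falls apart into the required split short exact sequences of derived functors. The only difference is that you spell out explicitly why the connecting morphisms vanish (and sketch a $3\times 3$ alternative), a step the paper leaves to the cited theorem.
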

\begin{proof}
	By Proposition~\ref{proposition:horseshoe lemma semiab} and Remark~\ref{Remark Horseshoe Lemma SSES}, any split short exact sequence
	\begin{equation}\label{SSESbis}
		\xymatrix{0 \ar[r] & K \ar@{{ |>}->}[r]^-k & X \ar@{-{ >>}}@<.5ex>[r]^-{f} & Y \ar[r] \ar@{{ >}->}@<.5ex>[l]^-{s} & 0}
	\end{equation}
	in the category $\C$ admits a split short exact sequence of resolutions
	\begin{equation*}
		\xymatrix{0 \ar[r] & A(K) \ar@{{ |>}->}[r] & C(X) \ar@<.5ex>@{-{ >>}}[r] & E(Y) \ar[r] \ar@{{ >}->}@<.5ex>[l] & 0\text{.}}
	\end{equation*}
	The functor $F$ applied to it yields the split short exact sequence of proper chain complexes
	\begin{equation*}
		\xymatrix{0 \ar[r] & F(A(K)) \ar@{{ |>}->}[r] & F(C(X)) \ar@<.5ex>@{-{ >>}}[r] & F(E(Y)) \ar[r] \ar@{{ >}->}@<.5ex>[l] & 0}
	\end{equation*}
	in the homological category $\E$. The induced long exact homology sequence~\cite[Theorem 4.5.7]{Borceux-Bourn} falls apart into the needed split short exact sequences of derived functors.
\end{proof}

\begin{remark}
	Note that we are assuming that the functor $F$ preserves binary coproducts. In the unlikely situation that a level-wise coproduct of any two projective resolutions in $\C$ is again a resolution (exactness is not guaranteed in general), the left derived functors $\Left_n(F)$ of $F$ preserve binary coproducts as well---which, as a consequence of Proposition~\ref{Left derived of protoadditive functor}, makes them subtractive functors.
\end{remark}

Recall~\cite{Bourn-Janelidze:Semidirect,BJK} that in a semi-abelian category, the middle object $X$ in a split short exact sequence \eqref{SSESbis} may be seen as a semidirect product $K\rtimes Y$ of the outer objects $K$ and $Y$, with respect to the induced internal action of $Y$ on $K$. Here the above result may be reformulated as follows:

\begin{corollary}\label{Protoadditive between semi-abelian cats}
	Let $\C$ be a semi-abelian category with enough projectives which satisfies Condition \P. Let $\E$ be semi-abelian and ${F\colon \C \to \E}$ a protoadditive functor which preserves binary coproducts and proper morphisms. Then
	\[
		\Left_n(F)(K\rtimes Y)\cong \Left_n(F)(K)\rtimes \Left_n(F)(Y)
	\]
	for each $n\in \NN$: the left derived functors of $F$ preserve semi-direct products.\noproof
\end{corollary}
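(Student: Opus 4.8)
The plan is to recognise this corollary as a translation of Proposition~\ref{Left derived of protoadditive functor} into the language of semidirect products, via the equivalence---valid in any semi-abelian category~\cite{Bourn-Janelidze:Semidirect,BJK}---between split short exact sequences equipped with a chosen splitting and internal actions, under which the middle object of such a sequence is precisely the semidirect product of the two outer objects with respect to the associated action.

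First I would unwind the data. Writing $X$ for $K\rtimes Y$ with respect to the given internal action of $Y$ on $K$, there is a canonical split short exact sequence
\[
	\xymatrix{0 \ar[r] & K \ar@{{ |>}->}[r]^-k & X \ar@{-{ >>}}@<.5ex>[r]^-{f} & Y \ar[r] \ar@{{ >}->}@<.5ex>[l]^-{s} & 0}
\]
in $\C$ whose associated action is the one we started with. Since $\C$ is semi-abelian it is pointed, regular, has binary coproducts and enough projectives, and satisfies Condition~\P\ by hypothesis; and since $\E$ is semi-abelian it is homological with binary coproducts. Moreover $F$, being protoadditive and binary-coproduct-preserving, is subtractive, and it preserves proper morphisms by assumption. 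Hence Theorem~\ref{Thm Def L} applies and each $\Left_n(F)\colon\C\to\E$ is a well-defined functor, and Proposition~\ref{Left derived of protoadditive functor} applies because $F$ is protoadditive between homological categories.

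Next I would feed the split short exact sequence above into Proposition~\ref{Left derived of protoadditive functor}: it produces a split short exact sequence
\[
	\xymatrix{0 \ar[r] & \Left_n(F)(K) \ar@{{ |>}->}[r] & \Left_n(F)(X) \ar@{-{ >>}}@<.5ex>[r] & \Left_n(F)(Y) \ar[r] \ar@{{ >}->}@<.5ex>[l] & 0}
\]
in $\E$ for every $n\in\NN$. Since $\E$ is semi-abelian, this sequence corresponds, under the same equivalence, to an internal action of $\Left_n(F)(Y)$ on $\Left_n(F)(K)$, and it exhibits $\Left_n(F)(X)$ as the semidirect product $\Left_n(F)(K)\rtimes\Left_n(F)(Y)$ for that action, which gives the claimed isomorphism. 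Its naturality in the original sequence follows from the naturality already recorded in Proposition~\ref{proposition:horseshoe lemma semiab} and Remark~\ref{Remark Horseshoe Lemma SSES}, together with the functoriality of homology.

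The one step deserving genuine care is the identification of the action governing the semidirect-product decomposition of $\Left_n(F)(X)$: one should not expect $\Left_n(F)$ to preserve the internal-action functors $Y\flat(-)$ themselves, so the action on the right-hand side is not literally $\Left_n(F)$ applied to the original action, but the one canonically induced by the split short exact sequence of Proposition~\ref{Left derived of protoadditive functor}. Making this precise---and checking that the splitting produced by the Horseshoe construction of Remark~\ref{Remark Horseshoe Lemma SSES} does descend, after applying $F$ and taking homology, to the section of the sequence of derived functors---is the main obstacle; everything else is a direct application of results established above.
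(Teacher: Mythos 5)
Your proposal is correct and follows exactly the route the paper intends for this corollary (which is stated without proof): encode the semidirect product as a split short exact sequence, apply the protoadditivity of $\Left_n(F)$ from Proposition~\ref{Left derived of protoadditive functor} to get a split short exact sequence in $\E$, and read off the middle object as the semidirect product of the outer ones via the equivalence of~\cite{Bourn-Janelidze:Semidirect,BJK}. Your closing caveat---that the action on $\Left_n(F)(K)$ is the one induced by the resulting split short exact sequence rather than the image of the original action---is an accurate and welcome clarification, but it is not an obstacle, since the statement only asserts the isomorphism with respect to that induced action.
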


When the target of $F$ is abelian, this becomes:

\begin{corollary}\label{Protoadditive with abelian target}
	Let $\C$ be a semi-abelian category with enough projectives which satisfies Condition \P. Let $\E$ be an abelian category and ${F\colon \C \to \E}$ a protoadditive functor which preserves binary coproducts. Then
	\[
		\Left_n(F)(K\rtimes Y)\cong \Left_n(F)(K)\oplus \Left_n(F)(Y)
	\]
	for each $n\in \NN$.
\end{corollary}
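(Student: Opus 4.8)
The plan is to deduce this directly from Corollary~\ref{Protoadditive between semi-abelian cats}, so the main task is simply to verify that its hypotheses are met and then to interpret the resulting semi-direct product decomposition inside the abelian category $\E$. First I would recall that every abelian category is semi-abelian, and that in an abelian category every morphism is proper: the image of any morphism is a normal subobject (every monomorphism being a kernel). Hence the requirement that $F$ preserve proper morphisms is automatic as soon as $\E$ is abelian. Consequently $F\colon \C\to\E$ meets all the conditions of Corollary~\ref{Protoadditive between semi-abelian cats} (equivalently, of Proposition~\ref{Left derived of protoadditive functor}): it is a protoadditive, binary-coproduct-preserving functor between semi-abelian categories, with $\C$ having enough projectives and satisfying Condition \P.

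Next I would run through the mechanism underlying Proposition~\ref{Left derived of protoadditive functor}. Writing $X = K\rtimes Y$ as the middle object of a split short exact sequence as in \eqref{SSESbis}, Proposition~\ref{proposition:horseshoe lemma semiab} together with Remark~\ref{Remark Horseshoe Lemma SSES} produces a degreewise split short exact sequence of projective resolutions $0\to A(K)\to C(X)\to E(Y)\to 0$. Since $F$ preserves binary coproducts and split short exact sequences, and preserves proper morphisms, applying $F$ yields a split short exact sequence of proper chain complexes in $\E$, and its long exact homology sequence breaks up into split short exact sequences
\[
	\xymatrix{0 \ar[r] & \Left_n(F)(K) \ar[r] & \Left_n(F)(X) \ar[r] & \Left_n(F)(Y) \ar[r] & 0}
\]
for every $n\in\NN$, the splittings in homology being induced by the chosen splittings on the chain-complex level.

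Finally, since $\E$ is abelian, a split short exact sequence exhibits the middle term as the biproduct of the outer two; equivalently, the semi-direct product $\Left_n(F)(K)\rtimes\Left_n(F)(Y)$ appearing in Corollary~\ref{Protoadditive between semi-abelian cats} is, when computed in the abelian category $\E$, just the direct sum $\Left_n(F)(K)\oplus\Left_n(F)(Y)$. This gives the claimed natural isomorphism $\Left_n(F)(K\rtimes Y)\cong\Left_n(F)(K)\oplus\Left_n(F)(Y)$. I do not expect any genuine obstacle here: the result is a straightforward specialization, and the only point meriting a word of care is the automatic preservation of proper morphisms in the abelian target, which is precisely what unlocks the semi-abelian statement for free.
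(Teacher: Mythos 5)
Your proposal is correct and follows essentially the same route as the paper: specialize Corollary~\ref{Protoadditive between semi-abelian cats} (via Proposition~\ref{Left derived of protoadditive functor}) and use that in an abelian category the middle object of a split short exact sequence is the biproduct of the outer two. Your explicit remark that preservation of proper morphisms is automatic when $\E$ is abelian is exactly the point the paper relies on implicitly in dropping that hypothesis.
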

\begin{proof}
	This follows from the fact that in an abelian category, the middle object in any split short exact sequence is a biproduct of the outer objects.
\end{proof}

\subsection{Syzygy Lemma and long exact sequence in homology}\label{sec - syzygy}
We proceed by proving some basic results that hold for functors which are \emph{right exact} in an appropriate sense. Indeed, outside the abelian context, the concept of a right exact functor bifurcates. The following notion~\cite{PVdL1} is appropriate for our purposes:

\begin{definition}[Sequentially right exact functor]\label{Def Seq Right Exact Functor}
	A functor $F\colon \C\to \E$ where $\C$ and $\E$ are pointed categories is called \defn{sequentially right exact} when for each short exact sequence
	\begin{equation}\label{SES}
		\xymatrix{0\ar[r] & K\ar@{{ |>}->}[r]^-k & X \ar@{-{ >>}}[r]^-{f} & Y\ar[r] & 0}
	\end{equation}
	in $\C$, the sequence
	\[
		\xymatrix{F(K)\ar[r]^-{F(k)} & F(X) \ar@{-{ >>}}[r]^-{F(f)} & F(Y)\ar[r] & 0}
	\]
	is exact.
\end{definition}

\begin{example}\label{ex - sequentially right exact}
	If $\C$ is a semi-abelian category and $F\colon {\C\to \E}$ is the reflector from~$\C$ to a full replete regular epi--reflective subcategory $\E$ of $\C$, then $F$ is sequentially right exact. This is the situation considered in Section~\ref{Sec:CrossedModules} (where $F=\pi_0$).
\end{example}

This means, in particular, that the morphism $F(k)$ is proper. Note that in a category with kernels and cokernels, this is equivalent to the definition given in Subsection~\ref{Subsec Context}. In particular, sequentially right-exact functors preserve proper morphisms. Hence, for such a functor between categories $\C$ and $\E$ as in Theorem~\ref{Thm Def L}, it suffices that it is, in addition, subtractive for the theorem to be applicable. Furthermore, protoadditivity of $F$ follows as soon as it preserves protosplit monomorphisms. Examples of such functors include the connected components functors of Section~\ref{Sec:CrossedModules} (see \cite{EG-honfg, MC-TVdL-2} for further details).

We are now ready to prove versions of a few key fundamental results such as the existence of a long exact sequence in homology (see for instance~\cite[Theorem 2.4.6]{Weibel} for the abelian case) or the Syzygy Lemma~\cite[Exercise~2.4.3]{Weibel}.

\begin{lemma}\label{Lemma L_0}
	Let $\C$ be a homological category with binary coproducts and with enough projectives which satisfies Condition \P. Let $\E$ be a homological category with binary coproducts. If $F\colon \C\to \E$ is a sequentially right-exact functor that preserves protosplit monomorphisms and binary coproducts, then ${\Left_0(F)\cong F}$.
\end{lemma}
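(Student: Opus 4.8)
The plan is to compute $\Left_0(F)(X)=\H_0(F(C(X)))$ directly from the tail $C_1\to C_0\to X$ of a projective resolution, using only the definition of a sequentially right-exact functor. As noted just before the statement, the hypotheses on $F$ imply that it preserves proper morphisms and is protoadditive, hence---preserving binary coproducts---subtractive; Theorem~\ref{Thm Def L} therefore applies, so $\Left_0(F)$ is a genuine functor, and by Theorem~\ref{thm - projective resolution unique up to approx homoto} its value on $X$ is independent of the chosen resolution up to canonical isomorphism. It thus suffices to exhibit, for one chosen resolution, a natural isomorphism with $F$.

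So fix a projective resolution $C(X)$ with structural regular epimorphism $d_0\colon C_0\to X$ and put $k_0\coloneq\kker(d_0)$. By Definition~\ref{def projective resolution} we have $d_1^C=k_0\circ\bar d_1^C$ with $\bar d_1^C$ a regular epimorphism; since $\C$ is homological, hence normal, $d_0=\coker(k_0)$, so $\coker(d_1^C)=\coker(k_0)=d_0$. Consequently the sequence $C_1\xrightarrow{d_1^C}C_0\xrightarrow{d_0}X\to 0$ is exact in the sense demanded by Definition~\ref{Def Seq Right Exact Functor}: $d_1^C$ is proper, being a regular epimorphism followed by a normal monomorphism, and its cokernel is $d_0$. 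Applying $F$ yields an exact sequence $F(C_1)\xrightarrow{F(d_1^C)}F(C_0)\xrightarrow{F(d_0)}F(X)\to 0$, i.e.\ $F(d_0)=\coker(F(d_1^C))$. On the other hand, each $d_n^C$ is proper, so $F$ sends $C(X)$ to a proper chain complex whose degree-$0$ outgoing differential vanishes; hence $\H_0(F(C(X)))=\coker(F(d_1^C))$. Combining the two computations produces an isomorphism $\theta_X\colon\Left_0(F)(X)\xrightarrow{\ \cong\ }F(X)$ characterized by $\theta_X\circ\coker(F(d_1^C))=F(d_0)$.

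For naturality, given $x\colon X\to Y$ and a lifting $C(x)\colon C(X)\to C(Y)$ of $x$, we have $x\circ d_0^{C(X)}=d_0^{C(Y)}\circ C(x)_0$; feeding this into the identity $\H_0(F(C(x)))\circ\coker(F(d_1^{C(X)}))=\coker(F(d_1^{C(Y)}))\circ F(C(x)_0)$ (functoriality of $\H_0$ on chain morphisms) and using the characterization of $\theta$ together with the fact that $\coker(F(d_1^{C(X)}))$ is an epimorphism, we get $\theta_Y\circ\Left_0(F)(x)=F(x)\circ\theta_X$. Hence $\theta\colon\Left_0(F)\To F$ is a natural isomorphism. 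The argument is essentially bookkeeping; the only points requiring care are verifying that $C_1\to C_0\to X\to 0$ is exact in the precise technical sense of Definition~\ref{Def Seq Right Exact Functor}---which is exactly where $\bar d_1^C$ being a regular epimorphism and the normality of $\C$ enter---and checking that $F$ applied to a resolution is again a proper complex, so that its $\H_0$ is literally $\coker(F(d_1^C))$. Neither is a real obstacle.
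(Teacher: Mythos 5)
Your proof is correct and follows essentially the same route as the paper, whose entire argument is that a sequentially right-exact functor preserves cokernels of proper morphisms, so $\H_0(F(C(X)))=\coker(F(d_1^C))\cong F(X)$; you merely spell out the verification that $d_0=\coker(d_1^C)$, that $F(C(X))$ is proper, and the (easy) naturality which the paper leaves implicit.
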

\begin{proof}
	Being a sequentially right-exact functor, $F$ preserves cokernels of proper morphisms. Hence $\Left_0(F)(X)=\H_0(F(C(X)))\cong F(X)$ for all $X$ in $\C$.
\end{proof}

\begin{theorem}[Long exact homology sequence]\label{them sec - long exact sequence of derived functors intro}
	Let $\C$ be a homological category with binary coproducts and with enough projectives which satisfies Condition \P. Let $\E$ be a homological category with binary coproducts. Let $F\colon \C\to \E$ be a sequentially right-exact functor that preserves protosplit monomorphisms and binary coproducts. Any short exact sequence \eqref{SES} in $\C$ gives rise to a long exact sequence in $\E$ as in Figure~\ref{fig long exact sequence with F} on page \pageref{fig long exact sequence with F}, which depends naturally on the given short exact sequence.
\end{theorem}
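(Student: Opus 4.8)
The plan is to reduce the statement to the long exact homology sequence for short exact sequences of proper chain complexes in a homological category, \cite[Theorem~4.5.7]{Borceux-Bourn}, following the template of the proof of Proposition~\ref{Left derived of protoadditive functor}, but starting from an arbitrary short exact sequence \eqref{SES} instead of a split one. First I would record that $F$ is subtractive: being sequentially right exact it preserves proper morphisms, and since it preserves protosplit monomorphisms it is protoadditive, which together with preservation of binary coproducts yields subtractivity; hence $\Left_n(F)$ is a functor by Theorem~\ref{Thm Def L}. Since $\C$ has enough projectives, choose a projective resolution $E(Y)$ of $Y$ and feed \eqref{SES} together with $E(Y)$ into the Half Horseshoe Lemma (Proposition~\ref{proposition:horseshoe lemma semiab}) to obtain projective resolutions $C(K)$ of $K$ and $A(X)$ of $X$ and a short exact sequence of chain complexes
\[
	0 \longrightarrow C(K) \longrightarrow A(X) \longrightarrow E(Y) \longrightarrow 0
\]
in $\C$ which is \emph{degreewise split} (the remark following Proposition~\ref{proposition:horseshoe lemma semiab}), the two chain maps being liftings of $k$ and of $f$. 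Applying $F$: being protoadditive and coproduct-preserving, $F$ preserves split short exact sequences, so it sends this to a degreewise split---hence levelwise short exact---sequence $0 \to F(C(K)) \to F(A(X)) \to F(E(Y)) \to 0$ of chain complexes in $\E$. Moreover each of these three complexes is \emph{proper}: a projective resolution is an exact complex in the normal category $\C$, so each of its differentials factors as a regular epimorphism followed by a normal monomorphism and is therefore proper, and $F$ preserves proper morphisms.

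Now \cite[Theorem~4.5.7]{Borceux-Bourn}, applied to this short exact sequence of proper chain complexes in the homological category $\E$, produces a long exact sequence
\[
	\cdots \to \H_n(F(C(K))) \to \H_n(F(A(X))) \to \H_n(F(E(Y))) \xrightarrow{\ \delta_n\ } \H_{n-1}(F(C(K))) \to \cdots
\]
It then remains to identify the data. By Theorem~\ref{thm - projective resolution unique up to approx homoto} (uniqueness of projective resolutions up to approximate homotopy) together with Proposition~\ref{prop - homology VS functor - sub functor} we have $\H_n(F(C(K))) \cong \Left_n(F)(K)$, independently of all choices, and likewise for $X$ and $Y$; and since the chain maps in the sequence of resolutions lift $k$ and $f$, the induced maps in homology are $\Left_n(F)(k)$ and $\Left_n(F)(f)$. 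Finally $\H_0(F(C(-))) \cong F(-)$ by Lemma~\ref{Lemma L_0}, and under this identification the tail of the long exact sequence becomes $F(K) \xrightarrow{F(k)} F(X) \xrightarrow{F(f)} F(Y) \to 0$, which is exact because $F$ is sequentially right exact. This is precisely Figure~\ref{fig long exact sequence with F}.

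For naturality, given a morphism of short exact sequences \eqref{SES}, I would lift $Y \to Y'$ to a chain map $E(Y) \to E(Y')$ and $K \to K'$ to $C(K) \to C(K')$ (Lemma~\ref{Lemma Existence Lifting}), and then---using the degreewise splittings and the projectivity of the objects of $A(X)$---build by recursion a chain map $A(X) \to A(X')$ over $X \to X'$ fitting into a morphism of the two short exact sequences of (proper) chain complexes in $\E$. Naturality of the connecting morphisms in \cite[Theorem~4.5.7]{Borceux-Bourn} then gives the required commutative ladder, while independence of all choices follows from uniqueness of liftings up to approximate chain homotopy (Theorem~\ref{thm - projective resolution unique up to approx homoto}) and homotopy-invariance of homology (Theorem~\ref{thm - approx homotopy and homology}, transported to $\E$ through Proposition~\ref{prop - homology VS functor - sub functor}). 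The main technical obstacle is precisely this last step: producing the comparison morphism between the two short exact sequences of resolutions---a morphism-level, relative version of the Horseshoe construction---compatibly with the degreewise split structure; everything else is an assembly of results already established in the paper.
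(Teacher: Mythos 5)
Your proposal follows essentially the same route as the paper's own proof: apply the Half Horseshoe Lemma to obtain a degreewise split short exact sequence of resolutions, use protoadditivity (which, as you note, follows from sequential right exactness plus preservation of protosplit monomorphisms) to transport it to a short exact sequence of proper chain complexes in $\E$, invoke the long exact homology sequence of \cite[Theorem~4.5.7]{Borceux-Bourn}, and identify the tail via Lemma~\ref{Lemma L_0}. Your additional remarks on identifying the induced maps with $\Left_n(F)(k)$, $\Left_n(F)(f)$ and on naturality only flesh out points the paper leaves implicit, so the argument is correct and matches the paper's.
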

\begin{proof}
	The proof goes as in the abelian setting. By Proposition~\ref{proposition:horseshoe lemma semiab}, in the homological category $\C$, we have a resolution of the given short exact sequence with a split short exact sequence at each level. Since these are preserved by any protoadditive functor, we obtain a short sequence of proper chain complexes in $\E$. It suffices to apply~\cite[Theorem 4.5.7]{Borceux-Bourn} in the homological category $\E$ to find a long exact sequence as in Figure~\ref{fig long exact sequence with F}. The shape of the tail of the sequence (which \emph{a priori} consists of the functors $\Left_0(F)$) follows from Lemma~\ref{Lemma L_0}.
\end{proof}

A direct consequence of this result is the \defn{Syzygy Lemma}. Recall that an object $\Omega(X)$ is called a \defn{syzygy} of an object $X$ if it fits a short exact sequence of the shape
\begin{equation} \xymatrix{0\ar[r] & \Omega(X)\ar@{{ |>}->}[r] & P \ar@{-{ >>}}[r] & X\ar[r] & 0} \label{eq-def syzygy}
\end{equation}
where $P$ is a projective object. In a normal category with enough projectives, each object $X$ admits a syzygy $\Omega(X)$.

\begin{theorem}[Syzygy Lemma]\label{Syzygy Lemma}
	Under the assumptions of Theorem~\ref{them sec - long exact sequence of derived functors intro}, if $\Omega(X)$ is a syzygy of $X\in \C$, then
	\[
		\Left_{n+1}(F)(X)\cong \Left_n(F)(\Omega(X))
	\]
	for all $n\geq 1$.
\end{theorem}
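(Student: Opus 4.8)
The plan is to read off the isomorphism directly from the long exact homology sequence of Theorem~\ref{them sec - long exact sequence of derived functors intro}, using that the higher left derived functors vanish on projective objects (Proposition~\ref{Prop Derived On Projective Object}, which applies since, under the present hypotheses, $F$ is subtractive and preserves proper morphisms).

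First I would fix a syzygy sequence $0\to \Omega(X)\to P\to X\to 0$ as in~\eqref{eq-def syzygy}, with $P$ projective; such a sequence exists because $\C$, being homological, is in particular a normal category with enough projectives. This is a short exact sequence in $\C$, so Theorem~\ref{them sec - long exact sequence of derived functors intro} applies and produces a long exact sequence in $\E$ which, reading off Figure~\ref{fig long exact sequence with F} with $K=\Omega(X)$, middle object $P$ and quotient $X$, contains the segment
\[
	\Left_{n+1}(F)(P)\ \longrightarrow\ \Left_{n+1}(F)(X)\ \overset{\delta_{n+1}}{\longrightarrow}\ \Left_{n}(F)(\Omega(X))\ \longrightarrow\ \Left_{n}(F)(P)
\]
for every $n\geq 1$.

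Next I would invoke Proposition~\ref{Prop Derived On Projective Object}: since $P$ is projective, $\Left_{k}(F)(P)=0$ for all $k\geq 1$. For $n\geq 1$ we have $n+1\geq 2$ and $n\geq 1$, so both outer terms of the displayed segment vanish, and exactness forces $\delta_{n+1}$ to be simultaneously a monomorphism and a regular epimorphism, hence an isomorphism in the homological category $\E$. This gives $\Left_{n+1}(F)(X)\cong\Left_{n}(F)(\Omega(X))$ for all $n\geq 1$, as claimed; if one works with a functorially chosen syzygy, naturality of this isomorphism in $X$ follows from the naturality clause of Theorem~\ref{them sec - long exact sequence of derived functors intro}.

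I do not expect a genuine obstacle here: the hypotheses of Theorem~\ref{them sec - long exact sequence of derived functors intro} are precisely the standing assumptions of Theorem~\ref{Syzygy Lemma}, so nothing needs to be re-verified, and the only point to watch is the bookkeeping of indices. It is exactly the fact that $\Left_{0}(F)(P)\cong F(P)$ need not vanish that restricts the conclusion to $n\geq 1$ rather than $n\geq 0$: in degree zero one obtains only the exact sequence $0\to\Left_1(F)(X)\to\Left_0(F)(\Omega(X))\to F(P)\to F(X)\to 0$, not an isomorphism.
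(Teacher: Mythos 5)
Your proposal is correct and follows the same route as the paper: apply the long exact homology sequence of Theorem~\ref{them sec - long exact sequence of derived functors intro} to the syzygy sequence~\eqref{eq-def syzygy} and use Proposition~\ref{Prop Derived On Projective Object} to kill the terms $\Left_k(F)(P)$ for $k\geq 1$, so that the connecting morphism is an isomorphism. Your extra remarks (why $F$ satisfies the hypotheses of Proposition~\ref{Prop Derived On Projective Object}, and why the conclusion fails for $n=0$) just make explicit what the paper leaves implicit.
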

\begin{proof}
	By Proposition~\ref{Prop Derived On Projective Object} we have $\Left_n(F)(P)=0$ for all $n> 0$. To conclude, we apply Theorem~\ref{them sec - long exact sequence of derived functors intro} on the short exact sequence \eqref{eq-def syzygy}.
\end{proof}

The case of Schreier varieties (such as the category of groups, for instance) merits some special attention, because in such a category, a syzygy is always a projective object.

\begin{corollary}\label{Cor Syzygy Schreier}
	Under the assumptions of Theorem~\ref{Syzygy Lemma}, if $\C$ is a Schreier variety, then $\Left_n(F)=0$ for all $n\geq 2$. \noproof
\end{corollary}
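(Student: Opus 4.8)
The plan is to combine the Syzygy Lemma (Theorem~\ref{Syzygy Lemma}) with the characterisation of projectives in a Schreier variety recalled in Example~\ref{ex_schreier variety}. First I would fix an object $X\in\C$ and choose a syzygy, i.e., a short exact sequence
\[
	\xymatrix{0\ar[r] & \Omega(X)\ar@{{ |>}->}[r] & P \ar@{-{ >>}}[r] & X\ar[r] & 0}
\]
with $P$ projective; such a sequence exists because $\C$ has enough projectives. In a Schreier variety ``projective $=$ free'', so $P$ is a free algebra; and a subobject of a free algebra is again free, hence $\Omega(X)$ is free, and in particular projective.

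Next I would invoke Proposition~\ref{Prop Derived On Projective Object}: since $\Omega(X)$ is projective, $\Left_n(F)(\Omega(X))=0$ for every $n\geq 1$. The Syzygy Lemma gives $\Left_{n+1}(F)(X)\cong\Left_n(F)(\Omega(X))$ for all $n\geq 1$, so the left-hand side vanishes for every $n\geq 1$; equivalently $\Left_m(F)(X)=0$ for all $m\geq 2$. As this holds for every object $X$, the functor $\Left_m(F)$ sends every object of $\C$ to $0$ and is therefore the zero functor, for each $m\geq 2$.

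There is essentially no technical obstacle here: the only point worth stressing is conceptual rather than computational—in a Schreier variety the recursion implicit in the Syzygy Lemma terminates after a single step, because the first syzygy is already projective, whereas in a general homological variety satisfying Condition~\P\ syzygies need not be projective and higher derived functors can be non-trivial. This is precisely the phenomenon noted after Example~\ref{ex_schreier variety}: Schreier varieties, while satisfying~\P, are homologically uninteresting, which is why one is forced to look for non-Schreier examples of Condition~\P\ such as $\LieK$.
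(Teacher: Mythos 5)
Your argument is correct and is exactly the one the paper intends (the corollary is left without proof precisely because, as noted just before it, in a Schreier variety every syzygy is projective): free $=$ projective makes $\Omega(X)$ projective, and then Proposition~\ref{Prop Derived On Projective Object} together with the Syzygy Lemma kills $\Left_m(F)(X)$ for all $m\geq 2$. Nothing is missing.
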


\section{Simplicial homotopy vs.\ approximate chain homotopy}\label{sec - comparison simplicial homotopy and approximate homotopy}

As explained in the introduction and in greater detail in Section~\ref{sec - approximate difference}, outside the additive context, we cannot define homotopy via~\eqref{Abelian-Homotopy}. This difficulty in defining a suitable notion of chain homotopy led the authors of~\cite{EverVdL2} to use simplicial methods---following~\cite{Barr-Beck,MR0470032,MR1489738}---rather than chain complexes in the context of semi-abelian homology. Barr and Beck~\cite{Barr-Beck} derived functors $F\colon \C \to \E$ where $\E$ is an abelian category, and Inassaridze~\cite{MR0470032,MR1489738} considered such $F$ where $\E$ is the category $\Gp$ of groups\footnote{The approach in~\cite{MacDonald}, however, is fundamentally different.}. In~\cite{EverVdL2}, both approaches are extended to the case where $\E$ is any semi-abelian category---although at the time, Inassaridze's work was not yet known to the authors of~\cite{EverVdL2}. Aside from the ad-hoc strategy of fixing a functorial choice of resolutions once and for all, this was the only general approach to homology readily available in this context. The existence of standard simplicial tools in the semi-abelian context naturally led the authors of~\cite{JuliaThesis} and~\cite{GVdL} to use simplicial methods for proving results in homological algebra.

Our aim in this section is to investigate under which conditions the simplicially defined derived functors of~\cite{EverVdL2} agree with those of Section~\ref{Section Derived Functors}. We start by recalling some basic definitions.

\subsection{Simplicial objects and homology}
When $\C$ is an abelian category, we have the well-known \defn{Dold--Kan--Puppe equivalence}~\cite{Dold-Puppe,DoldcorrespondenceRMod}, which asserts that the category of simplicial objects in $\C$ (denoted $\s(\C)$) and the category of positively graded chain complexes in $\C$ (denoted $\Ch^+(\C)$) are equivalent. In a semi-abelian category, this need no longer be the case. For instance, in~\cite{Carrasco-Homotopy}, Carrasco and Cegarra provide an explicit description of $\s(\Gp)$ as being equivalent to the category of \emph{hypercrossed complexes of groups}, which differs from $\Ch^+(\Gp)$. This fact was generalized by Bourn~\cite{Bourn:Dold-Kanpublished} to semi-abelian categories.

We can, however, still associate a chain complex to any simplicial object, even in a non-abelian category. Indeed, the \defn{Moore functor} $\N\colon \s(\C)\to \Ch^+(\C)$ can be defined for any pointed category with pullbacks $\C$. It has been shown in~\cite{EverVdL2,Borceux-Bourn} that if $\C$ is, in addition, exact and protomodular, then $\N$ maps any simplicial object to a proper chain complex---hence the concept of \defn{homology of a simplicial object}, defined as the homology of the associated Moore chain complex.

For the sake of convenience, we recall some of the details. A \defn{semi-simplicial object} $\A=(A_n)_{n\geq 0}$ in a category $\C$ is a family of objects, together with \defn{face operators} $\partial_{i}^\A\colon A_n\to A_{n-1}$ for $i\in [n]=\{0,\dots, n\}$ where $n>0$, such that
\begin{equation*}\label{eq - semi-simplicial identities}
	\partial_{i}^\A \partial_{j}^\A  = \partial_{{j-1}}^\A \partial_{i}^\A
\end{equation*}
holds if $i<j$. A \defn{quasi-simplicial object} is a semi-simplicial object equipped with \defn{degeneracy operators} $\sigma_{i }^\A\colon A_n \to A_{n+1}$ for $n\geq 0$, such that
\begin{equation*}\label{eq - quasi-simplicial identities}
	\partial_{i}^\A \sigma_{j}^\A    = \begin{cases}
		\sigma_{{j-1}}^\A \partial_{i}^\A & \text{if  $i<j$}            \\
		1_{A_n}                           & \text{if  $i=j$ or $i=j+1$} \\
		\sigma_{j}^\A \partial_{{i-1}}^\A & \text{if  $i>j+1$}
	\end{cases}
\end{equation*}
Finally, a quasi-simplicial object is called a \defn{simplicial object} if the degeneracy operators satisfy
\begin{equation*}\label{eq - simplicial identities}
	\sigma_{i }^\A \sigma_{j }^\A    = \sigma_{{j+1}}^\A \sigma_{{i}}^\A
\end{equation*}
when $i<j$.

A \defn{(quasi-)simplicial morphism} from  $\A$ to $\B$ is a family ${f=(f_n\colon A_n\to B_n)_{n\geq 0}}$ of morphisms of $\C$, such that
\[
	f_{n-1}\partial_{i}^\A=\partial_{i}^\B f_n \qquad\text{and}\qquad f_{n +1}\sigma_i^\A = \sigma_i^\B f_n.
\]

Let $\A$ be a (semi-)simplicial object in a pointed category with pullbacks $\C$. The \defn{Moore chain complex} or \defn{normalized chain complex} $\N(\A)$ is the chain complex defined by $\N_{n}(\A)=0$ for $n<0$, $\N_0(\A)=A_0$ and
\[
	\N_{n}(\A)\coloneq \Ker((\partial_{l}^\A)_{0\leq l \leq n-1}\colon A_n \to A_{n-1}^n)=\bigcap_{l=0}^{n-1}\Ker(\partial_{l}^\A\colon A_n \to A_{n-1})
\]
when $n\geq 1$. We denote the canonical inclusion $\kappa_n^\A\colon \N_n(\A)\to A_n$. The boundary morphism $d^A_n\colon \N_{n}(\A)\to \N_{n-1}(\A)$ is the restriction of $\partial_{n}^\A\colon A_n\to A_{n-1}$ to the appropriate kernel, and zero when $n\leq 0$. Clearly, $(\N(\A),d^A)$ is a chain complex, so the above definition gives rise to a functor $\N\colon \s(\C)\to \Ch^+(\C)$ where for a given simplicial morphism $f=(f_n)_n$, the chain morphism $\N_n(f)$ is defined by restricting $f_n$ to the kernels $\N_n(\A)$ and $\N_n(\B)$.

If in addition the category $\C$ is homological, then the functor $\N$ will preserve and reflect exactness, i.e., the image of an exact quasi-simplicial object by $\N$ is an exact chain complex and vice versa~\cite[Theorem 3.9]{Goedecke}.

\subsection{Simplicial homotopy}
Given simplicial morphisms $f$, $g\colon \A \to \B$, a \defn{(simplicial) homotopy} $\h$ from $f$ to $g$ consists of a family of morphisms $(\h_i\colon A_n\to B_{n+1})_{0\leq i\leq n}$ in $\C$ satisfying
\begin{align}\partial_{0}^\B \h_0  =f_n \qquad                                                     & \text{and}\qquad \partial_{{n+1}}^\B \h_n=g_n, \notag \\
             \partial_{i}^\B \h_j  = \begin{cases}
	                                     \h_{j-1} \partial_{i }^\A & \text{if $i<j$}       \\
	                                     \partial_{i}^\B \h_{i-1}  & \text{if $i=j\neq 0$} \\
	                                     \h_j \partial_{{i-1}}^\A  & \text{if $i>j+1$}
                                     \end{cases}   \quad & \text{and}\quad
             \sigma_{i}^\B \h_j    = \begin{cases}
	                                     \h_{j+1}\sigma^\A_i    & \text{if $i\leq j$} \\
	                                     \h_j \sigma_{{i-1}}^\A & \text{if $i>j$}
                                     \end{cases}
             \label{def - simplicial homotopy}
\end{align}
for each $n\geq 0$. We write $\h\colon f\simeq g$.

In the abelian setting, the Dold--Kan--Puppe equivalence extends to homotopies: through the equivalence, homotopic simplicial morphisms correspond to homotopic chain morphisms (see for instance~\cite[Theorem 8.4.1]{Weibel}). One implication remains true in semi-abelian categories: if $\h\colon f\simeq g\colon \A \to \B$, then an approximate chain homotopy $\N(f)\simeq\N(g)$ exists (Theorem~\ref{thm - simplicial homotopy implies chain homotopy - nonadd case}). In particular, then $\H_n\N(f)=\H_n\N(g)$, so that we recover a result of~\cite{JuliaThesis,GVdL}.

Given a simplicial homotopy $\h\colon f\simeq g\colon \A \to \B$, our aim is now to construct an approximate chain homotopy $h\colon\N(f)\simeq\N(g)$. We first observe that from the simplicial identities and the definition of a simplicial homotopy, we may deduce
\begin{equation*}
	\partial_{i }^\B(\sigma_{j}^\B f_n-\h_j)= \begin{cases}
		(\sigma_{{j-1}}^\B f_{n-1} - \h_{j-1})D(\partial_{i}^\A) & \text{if $i<j$}            \\
		f_n-\partial_{i}^\B \h_i=f_n-\partial_{i}^\B \h_{i-1}    & \text{if $i=j$ or $i=j+1$} \\
		(\sigma_{j}^\B f_{n-1}-\h_j)D(\partial_{{i-1}}^\A)       & \text{if $i>j+1$}
	\end{cases}
\end{equation*}
for any $0\leq i <n-1$, so that
\begin{equation}
	\partial_{i }^\B(\sigma_{j}^\B f_n-\h_j)D(\kappa_n^\A)=0
	\label{equation for homotopy - nonab case}
\end{equation}
when $i<j$ or $i>j+1$.
Furthermore, by Lemma~\ref{lemma: distributivity for minus}, we have
\[
	\partial_{i }^\B(\sigma_{j}^\B f_n-\h_j)D(\kappa_n^\A ) =\partial_{i }^\B(\sigma_{j}^\B f_n \kappa_n^\A -\h_j \kappa_n^\A ).
\]

\subsubsection*{Construction of $h_0$}
By the definition of the simplicial homotopy $\h$, the morphism $\sigma^B_0f_0-\h_0\colon D(\N_0(\A)) \to B_1$ satisfies
\[
	\partial_{0}^\B(\sigma_{0}^\B f_0-\h_0)= f_0-\partial_{0}^\B\h_0= f_0-f_0=0.
\]
Hence a unique morphism $h_0\colon D(\N_0(\A))\to \N_1(\B)$ exists (see Figure~\ref{Diag h_0}) such that
\[
	\kker(\partial_{0}^\B)h_0 = \sigma_{0}^\B f_0 - \h_0.
\]
Now
\[
	d_1^B h_0 =\partial_{1}^\B \kker(\partial_{0}^\B) h_0 = \partial_{1}^\B(\sigma_{0}^\B f_0-\h_0)=f_0-g_0.
\]

\begin{figure}
	$
		\xymatrix@R=4em@C=6em{
		A_1\ar[r]^-{\partial_1^A} &  A_0&\\
		\N_1(\A)\ar@{{ |>}->}[u]^-{\kappa_1^{\A}=\kker(\partial_0^A)} \ar@<-1 ex>[d]_-{\N_1(f)}\ar@<.5ex>[d]^-{\N_1(g)}\ar[r]^-{d_1^A} &  \N_0(\A)\ar@{=}[u]\ar@<-1 ex>[d]_-(0.3){f_0}\ar@<.5ex>[d]^-(0.3){g_0}& D(\N_0(\A))\ar@/^2pc/@{.>}[ddll]^-(0.3){\sigma^B_0f_0-\h_0}\ar@/^0.5pc/@{-->}[dll]_-(0.25){h_0}\\
		\N_1(\B)\ar@{{ |>}->}[d]_-{\kappa_1^{\B}=\kker(\partial_0 ^B)} \ar[r]_{d_1^B}  & \N_0(\B)\ar@{=}[d]\\
		B_1\ar[r]_-{\partial_1^B} & B_0&
		}
	$
	\caption{Diagram for $h_0$}\label{Diag h_0}
\end{figure}

\subsubsection*{Construction of $h_1$}
It suffices to consider the morphism
\begin{multline*}
	\left( (\sigma_{1}^\B f_1-\h_1) - (\sigma_{0 }^\B f_1 - \h_0)\right)D^2(\kker(\partial_{0}^\A))         \\
	= (\sigma_{1}^\B f_1-\h_1)D(\kker(\partial_{0}^\A)) - (\sigma_{0 }^\B f_1 - \h_0)D(\kker(\partial_{0}^\A))
\end{multline*}
which, thanks to~\eqref{def - simplicial homotopy}, \eqref{equation for homotopy - nonab case} and naturality of $\varsigma$, is such that
\begin{align*}
	 & \partial_{0}^\B \left( (\sigma_{1}^\B f_1-\h_1) - (\sigma_{0 }^\B f_1 - \h_0)\right)D^2(\kker(\partial_{0}^\A)) \\
	 & = ((\sigma_{0}^\B f_0-\h_0)D(\partial_{0}^\A)-0) D^2(\kker(\partial_{0}^\A))                                    \\
	 & = (\sigma_{0}^\B f_0-\h_0)D(\partial_{{0}}^\A)\varsigma_{D(A_1)} D^2(\kker(\partial_{0}^\A))                    \\
	 & = (\sigma_{0}^\B f_0-\h_0)D(\partial_{0}^\A) D(\kker(\partial_{0}^\A)) \varsigma_{D(\N_1(\A))} =0
\end{align*}
and
\begin{align*}
	 & \partial_{1}^\B \left( (\sigma_{1}^\B f_1-\h_1) - (\sigma_{0 }^\B f_1 - \h_0)\right)D^2(\kker(\partial_{0}^\A)) \\ & = ((f_1-\partial_{1}^\B \h_1)-(f_1-\partial_{1}^\B\h_0))D^2(\kker(\partial_{0}^\A)) =0.
\end{align*}
Consequently, there exists a unique morphism $h_1\colon D^2(\N_1(\A))\to \N_2(\B)$ such that
\[
	\kappa_2^\B h_1 = \left( (\sigma_{1}^\B f_1-\h_1) - (\sigma_{0 }^\B f_1 - \h_0)\right)D^2(\kker(\partial_{0}^\A)).
\]
By Lemma~\ref{lemma: distributivity for minus},
\[
	\kker(\partial_{0}^\B) d_2^B h_1 = \kker(\partial_{0}^\B) (\N_1(f)-\N_1(g))- \kker(\partial_{0}^\B) h_0 D(d_1^B),
\]
and since $\kker(\partial_{0}^\B)$ is monic, this implies
\[
	d_2^B h_1 =(\N_1(f)-\N_1(g))-  h_0 D(d_1^B).
\]

For $n\geq 2$, the morphisms that we must consider are slightly more complicated, since they will involve the natural transformation $\varsigma$.

\subsubsection*{Construction of $h_2$} When $n=2$, we put
\[
	\tilde{h}_2\coloneq \Bigl((\sigma_{2}^\B f_2 - \h_2) \varsigma_{D(A_2)} -\bigl((\sigma_{1}^\B f_2 - \h_1)- (\sigma_{0}^\B f_2 - \h_0) \bigr)\Bigr) D^3(\kappa_2^\A).
\]
Then there exists a unique morphism $h_2$ such that $\kappa_3^\B h_2 = \tilde{h}_2$. We find
\[
	d^B_3 h_2 = (\N_2(f)-\N_2(g))\varsigma_{D(\N_2(\A))}-h_1D^2(d_2^A).
\]

\subsubsection*{Construction of $h_n$} We can proceed similarly for defining $h_n$ when $n\geq 2$:
\begin{align*}
	\tilde{h}_n  \coloneq & \;\Bigl((\sigma_{n}^\B f_n- \h_n)\varsigma_{D(A_n)}^{n-1} -\bigl((\sigma_{{n-1}}^\B f_n- \h_{n-1})\varsigma_{D(A_n)}^{n-2}                                  \\
	                      & -(\cdots -((\sigma_{2}^\B f_n- \h_2)\varsigma_{D(A_n)}-((\sigma_{1}^\B f_n - \h_1)-(\sigma_{0}^\B f_n - \h_0))) \cdots )\bigr)\Bigr)D^{n+1}(\kappa_n^\A)    \\
	=                     & \; (\sigma_{n}^\B f_n- \h_n)\varsigma_{D(A_n)}^{n-1} D^{n}(\kappa_n^\A)-\bigl((\sigma_{{n-1}}^\B f_n- \h_{n-1})\varsigma_{D(A_n)}^{n-2}D^{n-1}(\kappa_n^\A) \\
	                      & -(\cdots-((\sigma_{2}^\B f_n- \h_2)\varsigma_{D(A_n)}D^{2}(\kappa_n^\A)                                                                                     \\
	                      & \qquad\;\;\,-((\sigma_{1}^\B f_n - \h_1)D(\kappa_n^\A)-(\sigma_{0}^\B f_n - \h_0)D(\kappa_n^\A)) \cdots ))\bigr)
\end{align*}
Using again the Equations~\eqref{equation for homotopy - nonab case} and \eqref{eq - compatibility sigma's and D's}, we can prove that $\partial_{i}^\B \tilde{h}_n=0$ for all $0\leq i \leq n$. Therefore, there exists a unique $h_n$ such that $\kappa_{n+1}^\B h_n = \tilde{h}_n$ as in Figure~\ref{Diag h_n}. Finally, by definition of $h_n$ and of $h_{n-1}$ for $n\geq 2$ and by using again the Equation~\eqref{equation for homotopy - nonab case} and Lemma~\ref{lemma: distributivity for minus}, we find:
\[
	d^B_{n+1} h_n = (\N_n(f)-\N_n(g))\varsigma_{D(\N_n(\A))}^{n-1}-h_{n-1}D^n(d_n^A).
\]

\begin{figure}
	$
		\xymatrix@R=4em@C=6em{
		A_{n+1}\ar[r]^-{\partial_{n+1}^\A} &  A_n&\\
		\N_{n+1}(\A)\ar@{{ |>}->}[u]^-{\kappa_{n+1}^{\A}} \ar@<-1 ex>[d]_-{\N_{n+1}(f)}\ar@<.5ex>[d]^-{\N_{n+1}(g)}\ar[r]^-{d_{n+1}^A} &  \N_n(\A)\ar@<-1 ex>[d]_-(0.3){\N_{n}(f)}\ar@<.5ex>[d]^-(0.3){\N_{n}(g)}  \ar@{{ |>}->}[u]_-{\kappa_n^{\A}}&D^{n+1}(\N_n(\A))\ar@/^2pc/@{.>}[ddll]^-(0.3){\tilde{h}_n}\ar@/^0.5pc/@{-->}[dll]_-(0.25){h_n}\\
		\N_{n+1}(\B)\ar@{{ |>}->}[d]_-{\kappa_{n+1}^\B}  \ar[r]_-{d_{n+1}^B}  & \N_n(\B)\ar@{{ |>}->}[d]^-(0.3){\kappa_n^{\B}}&\\
		B_{n+1}\ar[r]_-{\partial_{n+1}^\B} & B_{n}&
		}
	$
	\caption{Diagram for $h_n$}\label{Diag h_n}
\end{figure}

This proves the next result.
\begin{theorem}\label{thm - simplicial homotopy implies chain homotopy - nonadd case}
	Let $f$, $g\colon \A \to \B$ be simplicial morphisms in a semi-abelian category. If $f$ and $g$ are simplicially homotopic, then the chain morphisms $\N(f)$ and $\N(g)$ are approximately chain homotopic (Definition~\ref{Def Approx Chain Homotopy}).\noproof
\end{theorem}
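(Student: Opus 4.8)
The plan is to produce, from the given simplicial homotopy $\h\colon f\simeq g$, a family of morphisms $(h_n\colon D^{n+1}(\N_n(\A))\to \N_{n+1}(\B))_{n\in\NN}$ satisfying the defining Equations~\eqref{eq - def of hn for n=0 and general} of an approximate chain homotopy in the sense of Definition~\ref{Def Approx Chain Homotopy}; this is precisely what the recursive constructions of $h_0$, $h_1$, $h_2$ and $h_n$ carried out above accomplish, so that the theorem merely records the outcome. The morphisms $h_n$ are built by recursion on $n$: at stage $n$ one first forms a morphism $\tilde h_n\colon D^{n+1}(\N_n(\A))\to B_{n+1}$ out of the approximate differences $\sigma_j^\B f_n-\h_j$ for $0\leq j\leq n$, combined in an alternating fashion, with the natural transformation $\varsigma$ (in its iterated form $\varsigma^k$) inserted at the appropriate spots so that the domains of successive approximate differences can be reconciled --- recall that in the non-additive setting each approximate difference acquires a fresh copy of the functor $D$, so these copies must be collapsed again --- and finally precomposed with $D^{n+1}(\kappa_n^\A)$ to restrict from $A_n$ to the kernel $\N_n(\A)$.

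Two things must then be checked. First, $\partial_i^\B\tilde h_n=0$ for all $0\leq i\leq n$; since $\kappa_{n+1}^\B=\kker((\partial_i^\B)_{0\leq i\leq n})$, this vanishing yields a unique $h_n$ with $\kappa_{n+1}^\B h_n=\tilde h_n$. The verification rests on the simplicial homotopy identities~\eqref{def - simplicial homotopy}, on the master relation~\eqref{equation for homotopy - nonab case} deduced from them together with the definition of $\N$, on the compatibility~\eqref{eq - compatibility sigma's and D's} of $\varsigma$ with $D$ and the naturality of $\varsigma$, and on the calculus of approximate differences of Section~\ref{sec - approximate difference} (Lemma~\ref{lemma: distributivity for minus}, Lemma~\ref{lemma : D preserves 0}, Remark~\ref{rem:eqsub}). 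Second, one verifies the differential identity $d_{n+1}^\B h_n=(\N_n(f)-\N_n(g))\varsigma^{n-1}_{D(\N_n(\A))}-h_{n-1}D^n(d_n^A)$ for $n\geq 1$, and $d_1^\B h_0=\N_0(f)-\N_0(g)$ for $n=0$: one composes $\kappa_{n+1}^\B h_n=\tilde h_n$ with $\partial_{n+1}^\B$, substitutes the recursive shape of $\tilde h_n$, invokes the already-established identity for $h_{n-1}$ to telescope all but the two outermost terms, applies Lemma~\ref{lemma: distributivity for minus} and~\eqref{equation for homotopy - nonab case} once more, and cancels the monomorphism $\kappa_n^\B$. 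The low-degree cases $n=0$ and $n=1$ are slightly exceptional, since no nontrivial powers of $\varsigma$ occur yet, and are treated by direct computation as above.

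I expect the only real obstacle to be the bookkeeping, not any new idea: keeping the towers of $D$'s and $\varsigma$'s aligned so that every composite is well typed, and organising the alternating ``sum'' of approximate differences --- which in a non-additive category is a nested chain of binary approximate subtractions --- so that the simplicial homotopy axioms can be applied term by term. Once the $\tilde h_n$ are written out correctly, both verifications are routine diagram chases using only the results of Sections~\ref{sec - approximate difference} and~\ref{sec - approx homotopy}. Having obtained the approximate chain homotopy $h\colon\N(f)\simeq\N(g)$, Theorem~\ref{thm - approx homotopy and homology} immediately gives $\H_n\N(f)=\H_n\N(g)$ for all $n$, recovering the statement of~\cite{JuliaThesis,GVdL}.
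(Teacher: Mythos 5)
Your proposal follows exactly the route the paper takes: the theorem is proved by the recursive construction of the morphisms $\tilde h_n$ (nested alternating approximate differences $\sigma_j^\B f_n-\h_j$ corrected by iterated $\varsigma$'s and precomposed with $D^{n+1}(\kappa_n^\A)$), the vanishing $\partial_i^\B\tilde h_n=0$ giving the factorization $h_n$ through $\kappa_{n+1}^\B$, and the verification of the defining identities~\eqref{eq - def of hn for n=0 and general} via Lemma~\ref{lemma: distributivity for minus}, \eqref{equation for homotopy - nonab case} and \eqref{eq - compatibility sigma's and D's}. This is correct and coincides with the paper's argument, so nothing further is needed.
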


By Theorem~\ref{thm - approx homotopy and homology}, we thus recover:

\begin{corollary}
	Let $f$, $g\colon \A \to \B$ be simplicial morphisms in a semi-abelian category. If $f\simeq g$ then we have $\H_n\N(f)=\H_n\N(g)$ for all $n\geq 0$. \noproof
\end{corollary}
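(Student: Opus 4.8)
The plan is to chain together the two theorems that immediately precede this corollary. First I would observe that a semi-abelian category is in particular a \emph{normal subtractive} category: it is pointed, Barr exact and Bourn protomodular, hence homological, and every homological category is both normal (every regular epimorphism is a normal epimorphism) and subtractive, as recalled in Subsection~\ref{Subsec Context} and Section~\ref{sec:Subtractive Cats}. This places us exactly in the setting where both of the relevant theorems apply to the Moore complexes $\N(\A)$ and $\N(\B)$ and the chain morphisms between them.

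The first step is to invoke Theorem~\ref{thm - simplicial homotopy implies chain homotopy - nonadd case}: since $f\simeq g$ as simplicial morphisms, the induced chain morphisms $\N(f)$, $\N(g)\colon \N(\A)\to\N(\B)$ are \emph{approximately chain homotopic} in the sense of Definition~\ref{Def Approx Chain Homotopy}. The second step is to feed this into Theorem~\ref{thm - approx homotopy and homology}: in the normal subtractive category at hand, approximately chain homotopic morphisms induce the same morphism on homology. Applying this to $\N(f)$ and $\N(g)$ yields $\H_n\N(f)=\H_n\N(g)$ for every $n$, which is precisely the assertion of the corollary.

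There is essentially no obstacle here, as the statement is a direct consequence of the two preceding theorems; the only point requiring care is that the homology objects on which $\H_n\N(f)$ and $\H_n\N(g)$ act be well-defined. This is guaranteed because in a semi-abelian (so exact protomodular) category the normalization of any simplicial object is a \emph{proper} chain complex, so that $\H_n$ makes sense on $\N(\A)$ and $\N(\B)$ and Theorem~\ref{thm - approx homotopy and homology} is genuinely applicable.
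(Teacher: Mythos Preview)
Your proposal is correct and follows exactly the paper's own argument: the corollary is stated immediately after Theorem~\ref{thm - simplicial homotopy implies chain homotopy - nonadd case} with the phrase ``By Theorem~\ref{thm - approx homotopy and homology}, we thus recover'', so the intended proof is precisely the two-step chaining you describe. Your additional remark that the Moore complex is proper in a semi-abelian category (so that the homology objects are well defined) is a sensible clarification and matches what the paper notes elsewhere.
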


\subsection{Simplicial resolutions vs.\ chain resolutions}
Simplicial objects can be used to define \emph{simplicially derived functors}. Our aim here is to compare these with the chain derived functors of Section~\ref{Section Derived Functors} in contexts where both are available.

Recall that an \defn{augmented simplicial object} $(\S,\del_0^\S)$ is a simplicial object $\S$ together with a morphism $\del_0^\S\colon S_0\to S_{-1}$ such that $\del_0^\S\del_0^\S=\del_0^\S\del_1^\S$. In the context of a homological category\footnote{The concept of a (semi-)simplicial resolution goes back at least to~\cite{Dold-Puppe, Tierney-Vogel2, Tierney-Vogel, Barr-Beck}
	and makes sense in contexts, far more general than homological categories. The current setting allows a simpler presentation by means of an exact chain complex~\cite[Theorem~4.11]{Goedecke}, while sufficing for our purposes.}, an augmented simplicial object $(\S,\del_0^\S)$ is a \defn{(projective) simplicial resolution (of the object $S_{-1}$)} when the sequence
\begin{equation}\label{Eq Normalization of Resolution}
	\xymatrix{\cdots \ar[r] & \N_{n+1}(\S) \ar[r]^-{d_{n+1}^S} & \N_{n}(\S) \ar[r] & \cdots \ar[r]^-{d_1^S} & \N_0(\S) \ar[r]^-{\del_0^\S} & S_{-1} \ar[r] & 0}
\end{equation}
is exact (so that, in particular, $S_{-1}=\H_0(\N(\S))$) and $n\geq 0$ implies that $S_n$ is projective.

Barr and Beck~\cite{Barr-Beck} use comonads to construct resolutions (which are potentially not of the above kind, depending on the chosen comonad), while Tierney and Vogel~\cite{Tierney-Vogel2} use a recursive construction (for something more general involving a semi- or quasi-simplicial object). Here, it suffices to recall that in each case, any two given resolutions are homotopically equivalent, so that they can be used to define derived functors. For instance, if $\C$ is a variety of algebras and $\G$ is the comonad induced by the forgetful adjunction to $\Set$, then each object $X$ of $\C$ admits a canonical simplicial resolution we denote $(\G(X),\del_0^{\G(X)})$. Here the definition recalled above does indeed agree with Barr and Beck's~\cite[Theorem~4.11]{Goedecke}. For any functor $F\colon \C\to \E$ where $\E$ is a semi-abelian category and for each $n\geq 0$, we may define the \defn{$(n+1)$th simplicially derived functor $\H_{n+1}(-,F)_{\G}$ of $F$} via
\[
	\H_{n+1}(X,F)_{\G}\coloneq\H_n(\N(F(\G(X)))),
\]
for $X$ in $\C$. Note the dimension shift which is there for historical reasons; see, for instance, \cite{EverVdL2, JuliaThesis} for further details.

On the way to proving that under certain conditions, these derived functors may be obtained via projective chain resolutions, we are now going to explore when the normalization $\N(\S)$ of a simplicial resolution $(\S^-,\del_0^{\S^-})$ of an object $X$ is a projective chain resolution of $X$. As explained above, in a homological category, we always find an exact sequence \eqref{Eq Normalization of Resolution} of the right shape. We just need that all the objects $\N_i(\S)$ are projective.

This depends on a dimension shift argument, based on the concept of \emph{décalage} of an augmented simplicial object. Given such an $(\S,\del_0^\S)$ in a homological category~$\C$, its \defn{décalage} is the augmented simplicial object $(\S^-,\del_0^{\S^-})$ obtained by leaving out the $S_{-1}$, all the $\partial^{\S}_0\colon S_n\to S_{n-1}$ and all the $\sigma^{\S}_0$, so that $S^-_{n-1}=S_{n}$, $\del^{\S^-}_i=\del^{\S}_{i+1}\colon S_{n+1}\to S_n$, etc. Those left out morphisms conspire to a split epimorphism of simplicial objects, and thus a short exact sequence
\begin{equation}\label{eq - shifting and Lambda simplicial object}
	\xymatrix{0 \ar[r] & \Lambda(\S) \ar@{{ |>}->}[r] & {\S}^- \ar@{-{ >>}}@<.5ex>[r]^-{(\partial^{\S}_0)_n} & \S \ar[r] \ar@{{ >}->}@<.5ex>[l]^-{(\sigma^{\S}_0)_n} & 0}
\end{equation}
in $\s(\C)$. Since the functor $\N$ is exact (see~\cite[Theorem 3.9]{Goedecke}), we obtain a split short exact sequence of chain complexes
\begin{equation*}\label{Eq N on shift}
	\xymatrix{0 \ar[r] & \N(\Lambda(\S)) \ar@{{ |>}->}[r] & \N(\S^-) \ar@{-{ >>}}@<.5ex>[r] & \N(\S) \ar[r] \ar@{{ >}->}@<.5ex>[l] & 0.}
\end{equation*}
By \cite[Lemma 3.6]{Goedecke}, we have $\H_{n}\N(\Lambda(\S))=\H_{n+1}\N(\S)$ for all $n\geq 1$, essentially by the long exact homology sequence induced by this short exact sequence of chain complexes and the fact that $\H_n\N(\S^-)=0$ when $n\geq 1$. However, the following stronger result holds as well, and does in fact follow immediately from the definition of the Moore complex and \eqref{eq - shifting and Lambda simplicial object}: for all $n\geq 1$, we have $\N_{n-1}\Lambda(\S)=\N_{n}\S$ and $d_{n}^{\Lambda S}=d_{n+1}^S$. From this, it is easy to deduce that $\N_n(\S)=(\Lambda^n (\S))_0$ for all~${n\geq 1}$.

If now Condition \P\ holds in $\C$, then if a simplicial object $\S$ in $\C$ consists of projective objects, so does $\S^-$, and hence $\Lambda (\S)$ as well, by the exactness of~\eqref{eq - shifting and Lambda simplicial object}. By induction, it follows that all objects of $\N(\S)$ are projective.

In conclusion: the normalization of a projective simplicial resolution $\S(X)$ of an object $X$ is a projective chain resolution of $X$.

\begin{proposition}\label{thm - NS=P}
	Let $\C$ be a homological category with enough projectives that satisfies Condition \P, and let $X$ be an object of $\C$. For any simplicial resolution $\S(X)$ of $X$, the Moore complex $\N(\S(X))$ is a projective chain resolution of $X$.\noproof
\end{proposition}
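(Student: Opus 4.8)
The plan is to assemble the two facts established in the discussion preceding the statement: that the sequence~\eqref{Eq Normalization of Resolution} attached to a simplicial resolution is exact of the shape of a chain resolution, and that Condition~\P\ forces all its terms to be projective. The first is essentially definitional. By hypothesis $\S(X)$ is a projective simplicial resolution of $X$, so~\eqref{Eq Normalization of Resolution} is an exact sequence; by Remark~\ref{remark projective resolution} this says precisely that $\N(\S(X))$ is a positively graded chain complex with $\H_0(\N(\S(X)))\cong X$ (the augmentation being the cokernel of the first Moore differential) and $\H_n(\N(\S(X)))=0$ for all $n\neq 0$. So the only remaining point is that each Moore object $\N_n(\S(X))$ is projective.

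The subtle point is that $\N_n(\S(X))$ is an $n$-fold intersection of kernels of face operators inside $S_n(X)$, and there is no reason for such an intersection of subobjects of a projective object to be projective on its own. The key reduction is the décalage identity $\N_n(\S)=(\Lambda^n(\S))_0$ recorded above, which replaces ``an intersection of kernels inside $S_n(X)$'' by ``the degree-zero object of an $n$-fold iterated décalage''. I would therefore prove, by induction on $n\geq 0$, that the simplicial object $\Lambda^n(\S(X))$ is levelwise projective; evaluating in degree $0$ then gives projectivity of $\N_n(\S(X))$, the base case $n=0$ being trivial since $\N_0(\S(X))=S_0(X)$ is projective by hypothesis.

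For the inductive step, assume $\Lambda^n(\S(X))$ is levelwise projective. Its décalage is, levelwise, just a reindexing of $\Lambda^n(\S(X))$, hence also levelwise projective. Applying the short exact sequence~\eqref{eq - shifting and Lambda simplicial object} to $\Lambda^n(\S(X))$ yields, in each degree $m$, a split short exact sequence exhibiting $(\Lambda^{n+1}(\S(X)))_m$ as a protosplit subobject (Definition~\ref{Def Protosplit Subobject}) of a projective object; hence it is projective by Condition~\P\ (Definition~\ref{Def Condition (P)}). Thus $\Lambda^{n+1}(\S(X))$ is levelwise projective, completing the induction.

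Combining the two parts, $\N(\S(X))$ is a positively graded chain complex with projective terms whose only nonzero homology object is $\H_0\cong X$; by Remark~\ref{remark projective resolution} this is exactly a projective chain resolution of $X$, as wanted. I expect the only real work to be the bookkeeping around the décalage identities $\N_{n-1}(\Lambda(\S))=\N_n(\S)$ and $\N_n(\S)=(\Lambda^n(\S))_0$, all of which is already spelled out above; the conceptual content is simply that Condition~\P\ is precisely what transports projectivity through one step of décalage.
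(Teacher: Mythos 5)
Your proposal is correct and follows essentially the same route as the paper: exactness of the normalized complex is definitional (plus Remark~\ref{remark projective resolution}), and projectivity of the Moore objects is obtained from the identity $\N_n(\S)=(\Lambda^n(\S))_0$ by inducting on iterated décalages, where Condition \P\ applied to the levelwise split short exact sequence~\eqref{eq - shifting and Lambda simplicial object} transports projectivity through each step. No gaps to report.
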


Since the projective chain resolution $\N(\S(X))$ will be approximately chain homotopically equivalent to any projective chain resolution $C(X)$ of $X$, it is useful to compare the results of applying a functor to both $\S(X)$ and $C(X)$. If the functor $F$ is protoadditive, we can apply it level-wise to~\eqref{eq - shifting and Lambda simplicial object} and observe that $F(\Lambda(\S(X)))=\Lambda(F(\S(X)))$. Consequently,
\[
	F(\N_n(\S(X)))=F((\Lambda^n(\S(X)))_0)=\Lambda^n(F(\S(X)))_0=\N_n(F(\S(X)))
\]
for all $n\geq 1$, which implies
\[
	\H_n(\N(F(\S(X))))=\H_n(F(\N(\S(X))))
\]
for all $n\geq 0$. Therefore:
\begin{theorem}\label{Thm Compa}
	Let $\C$ be a homological category with binary coproducts and enough projectives that satisfies Condition \P, let $\E$ be a semi-abelian category, and let $F\colon \C\to \E$ be a protoadditive functor that preserves binary coproducts and proper morphisms. Then, for any object $X$, any choice of simplicial resolution $\S(X)$, and any chain resolution $C(X)$, we have
	\[
		\H_n(\N(F(\S(X))))=\H_n(F(\N(\S(X))))=\H_n(F(C(X))).
	\]

	If moreover $\C$ is a variety of algebras and $\G$ is the comonad induced by the forgetful adjunction to $\Set$, then for each $n\geq 0$, the derived functor $\Left_n(F)$ is naturally isomorphic to $\H_{n+1}(-,F)_{\G}$.
	\noproof
\end{theorem}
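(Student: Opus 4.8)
The plan is to establish the two displayed equalities separately and then read off the naturality statement as a special case. Throughout, recall that a protoadditive functor preserving binary coproducts is a subtractive functor, hence in particular zero-preserving, and that it preserves approximate chain homotopies (Proposition~\ref{prop - homology VS functor - protoadd with bin coproducts pres functor} and its proof); since $F$ also preserves proper morphisms by hypothesis and $\E$ is semi-abelian, hence normal and subtractive, every chain complex obtained by applying $F$ to a proper complex of $\C$ is again proper in $\E$, so that its homology detects exactness and Theorem~\ref{thm - approx homotopy and homology} is legitimately applicable inside $\E$.

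For the first equality $\H_n(\N(F(\S(X))))=\H_n(F(\N(\S(X))))$, I would run the décalage argument already sketched just before the theorem. Since $F$ preserves kernels of split epimorphisms, applying it level-wise to the split short exact sequence of simplicial objects~\eqref{eq - shifting and Lambda simplicial object} gives $F(\Lambda(\S(X)))=\Lambda(F(\S(X)))$, and by iteration $F(\Lambda^{n}(\S(X)))=\Lambda^{n}(F(\S(X)))$. Combining this with the identities $\N_n(\S)=(\Lambda^{n}(\S))_0$ and $d^{\Lambda S}_n=d^S_{n+1}$ for $n\geq 1$ (the case $n=0$ being trivial since $\N_0(-)=(-)_0$), one obtains an equality of chain complexes $F(\N(\S(X)))=\N(F(\S(X)))$ in $\E$, from which the equality of homology objects is immediate. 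The one point that needs care here is that the identification is compatible with the differentials, not merely with the objects in each degree.

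For the second equality $\H_n(F(\N(\S(X))))=\H_n(F(C(X)))$: by Proposition~\ref{thm - NS=P} the Moore complex $\N(\S(X))$ is a projective chain resolution of $X$, and so is $C(X)$ by assumption. By Theorem~\ref{thm - projective resolution unique up to approx homoto} these two resolutions are approximately chain homotopically equivalent; applying the subtractive functor $F$ produces an approximate chain homotopy equivalence between the proper complexes $F(C(X))$ and $F(\N(\S(X)))$ in $\E$, and Theorem~\ref{thm - approx homotopy and homology} forces their homologies to agree. Since $\H_n(F(C(X)))=\Left_n(F)(X)$ by the very definition of the derived functor, this closes the chain of equalities.

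For the final clause, when $\C$ is a variety of algebras and $\G$ is the comonad of the forgetful adjunction to $\Set$, the canonical augmented simplicial object $\G(X)$ is a projective simplicial resolution of $X$ in the sense used here — this is the point at which the agreement of the Barr--Beck construction with the present definition, recalled in the discussion preceding Proposition~\ref{thm - NS=P}, is invoked — so it is one admissible choice of simplicial resolution $\S(X)$, and the displayed equalities give $\H_{n+1}(X,F)_{\G}=\H_n(\N(F(\G(X))))=\H_n(F(C(X)))=\Left_n(F)(X)$. To upgrade this to a \emph{natural} isomorphism I would, for each $X$, fix a chain morphism $\theta_X\colon C(X)\to\N(\G(X))$ over $1_X$ (Lemma~\ref{Lemma Existence Lifting}) and set $\eta_X\coloneq\H_n(F(\theta_X))$, which is an isomorphism because $\theta_X$ is a chain homotopy equivalence; given $x\colon X\to Y$, both $\theta_Y\circ C(x)$ and $\N(\G(x))\circ\theta_X$ lift $x$, hence are approximately chain homotopic by Theorem~\ref{thm - projective resolution unique up to approx homoto}, and applying $F$ and passing to homology yields $\eta_Y\circ\Left_n(F)(x)=\H_{n+1}(x,F)_{\G}\circ\eta_X$. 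The main obstacle, to the extent there is one given the machinery already assembled, is bookkeeping: checking the on-the-nose commutation of $F$ with $\N$ and $\Lambda$ including the differentials, and tracking properness of all the complexes involved so that the homology computations in $\E$ are valid.
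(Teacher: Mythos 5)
Your proposal is correct and follows essentially the same route as the paper: the décalage/$\Lambda$ argument combined with protoadditivity of $F$ for the identification $F(\N(\S(X)))=\N(F(\S(X)))$, then Proposition~\ref{thm - NS=P}, Theorem~\ref{thm - projective resolution unique up to approx homoto}, preservation of approximate homotopies by the subtractive functor $F$, and Theorem~\ref{thm - approx homotopy and homology} for the comparison with $F(C(X))$, and finally the comonadic resolution $\G(X)$ for the identification with $\H_{n+1}(-,F)_{\G}$. Your explicit treatment of naturality via liftings over $x$ and of the compatibility of the differentials is a welcome bit of extra care but does not change the argument.
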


In the next section, we consider an example of this situation in the context of crossed modules.

\section{Example: crossed modules and \texorpdfstring{\ncat-groups}{ncat-groups}}\label{Sec:CrossedModules}

The category $\XMod$ of crossed modules~\cite{Wh,MW}, being a variety of $\Omega$-groups~\cite{Higgins}, is a semi-abelian category with enough projectives~\cite{Janelidze-Marki-Tholen}. More generally, such is the variety $\nCatGrp$ of \ncat-groups~\cite{Loday} for each $n\geq 1$. It is known that $\XMod$ is not a Schreier variety: in \cite[Proposition 5]{Carrasco-Homology}, a projective object is given that is not free. On the other hand, in~\cite{MC-TVdL-2} we find:

\begin{theorem}\label{Thm Crossed Modules}
	In the category $\XMod$ of crossed modules, the class of projectives is closed under protosplit subobjects.\noproof
\end{theorem}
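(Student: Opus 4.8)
The plan is to reduce the statement to the combinatorial core via Proposition~\ref{propo_equivalent_conditions}: since $\XMod$ is a semi-abelian —hence pointed Mal'tsev— variety, it suffices to verify condition (iii), namely that for any split epimorphism of sets $g\colon A\to B$, the kernel of the induced crossed-module morphism $T(g)\colon T(A)\to T(B)$ between free crossed modules is projective. So first I would recall the explicit description of the free crossed module $T(A)$ on a set $A$: it has the shape $\partial\colon C\to G$, where $G$ is a free group on appropriate generators and $C$ is described by the standard presentation of a free crossed module (generators indexed by $A$, relations forcing the Peiffer identity), together with the $G$-action. The point is that $T(g)$, being induced by a map of generating sets, is nicely compatible with these presentations, and its kernel will again be an internal crossed module whose group components are themselves subgroups of free groups.

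The second step is to analyse the kernel $K = \Ker(T(g))$ componentwise. Writing $T(A) = (C_A \xrightarrow{\partial_A} G_A)$ and $T(B) = (C_B \xrightarrow{\partial_B} G_B)$, the kernel $K$ is the crossed module $(\Ker(T(g)_1) \to \Ker(T(g)_0))$ sitting inside $T(A)$. Since $G_A$ and $G_B$ are free groups, $\Ker(T(g)_0)$ is a subgroup of a free group, hence free by the Nielsen–Schreier theorem; and one should check that $\Ker(T(g)_1)$ is likewise controlled —either free, or at least a module-like object over a free group which is free in the relevant sense. The key structural input is that the splitting $s$ of $g$ gives a splitting of $T(g)$, so that $T(A)$ decomposes, as an internal object, in a way analogous to the group case treated for $\LieK$ via Lazard elimination. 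Concretely, I expect to use a crossed-module analogue of an elimination/decomposition theorem: writing $A = A' \cupdot s(B)$ with $A' = \{a \in A : g(a) \neq \text{(chosen representative)}\}$ suitably chosen, one should obtain $T(A) \cong N \rtimes T(s(B))$ where $N$ is again a free crossed module (on a possibly larger generating set built from $A'$ and words in $s(B)$), and $K \cong N$. This exhibits $K$ as (a retract of) a free crossed module, hence projective.

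The main obstacle will be establishing precisely this elimination-type decomposition for free crossed modules: unlike the Lie algebra case, where Lazard's theorem (Theorem~\ref{thm : Lazard elimination}) is available off the shelf from Bourbaki, I would need either to locate or to prove the corresponding statement for crossed modules. The expected route is to work in two layers —first handle the underlying free groups $G_A = G_{A'} \rtimes G_{s(B)}$ by the classical Lazard/Kurosh-subgroup decomposition for free groups, then lift this to the crossed-module level by tracking what happens to the free module-part $C_A$ under the action, using that a submodule of a free module over a free group (equivalently, the second homotopy module of a presentation) behaves well enough. Once the decomposition $T(A)\cong N\rtimes T(s(B))$ with $N$ free and $K\cong N$ is in hand, the conclusion is immediate from Proposition~\ref{propo_equivalent_conditions}. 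For the extension to $\nCatGrp$ one then either iterates this argument dimension by dimension, using the equivalence of $n$-cat-groups with crossed $n$-cubes of groups and the fact that each "direction" contributes a free-group layer, or invokes a general transfer principle; I would expect the crossed-module case to contain all the essential difficulty, with the higher-dimensional cases following by an inductive bootstrapping on the number of directions.
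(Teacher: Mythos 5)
There is a genuine gap: the heart of your argument is the claimed elimination-type decomposition $T(A)\cong N\rtimes T(s(B))$ with $N$ a free (or at least projective) crossed module and $K\cong N$, and you explicitly leave this unproved. But this decomposition \emph{is} the theorem --- once you have reduced, via Proposition~\ref{propo_equivalent_conditions}, to kernels of maps between free objects, the projectivity of that kernel is exactly the remaining content, so nothing beyond the (correct) reduction step has actually been established. Moreover, there are concrete reasons to doubt the form in which you conjecture it. First, $\XMod$ is not a Schreier variety (the paper cites a projective crossed module that is not free), so expecting the kernel $N$ to be \emph{free}, as in the Lie-algebra case via Lazard elimination, is optimistic; at best one can hope for a retract of a free object, and producing such a retraction is precisely the non-trivial work. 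Second, your componentwise analysis misdescribes the free objects: the free object on a set in the single-sorted variety $\XMod$ is not Whitehead's free crossed module on a function with a free group on top and a ``module part'' below; its top group component is not a free group in general, so the appeal to Nielsen--Schreier for $\Ker(T(g)_0)$ and the plan to ``lift'' a free-group decomposition to the module layer do not get off the ground as stated.

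For comparison, the paper does not prove this theorem by a Lazard-style elimination at all: it defers to the companion work \cite{MC-TVdL-2}, where the statement follows from a general closure result --- internal crossed modules in any semi-abelian variety whose projectives are closed under protosplit subobjects again satisfy Condition \P\ --- applied to $\Gp$ (where \P\ holds by Nielsen--Schreier), and the $n$-cat-group case then follows by induction through the equivalence with internal categories. This is essentially the ``general transfer principle'' you mention only in passing for the higher-dimensional case; in the paper's approach it is the main engine already in dimension one, which is why the combinatorics of free crossed modules never has to be confronted directly.
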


It is further shown there that more generally, the non-Schreier variety $\nCatGrp$ of \ncat-groups satisfies Condition \P\ for each $n\geq 1$. This is a consequence of the fact~\cite{MC-TVdL-2} that the category of internal crossed modules~\cite{Janelidze} in any semi-abelian variety of universal algebras whose class of projectives is closed under protosplit subobjects again satisfies Condition \P. For each $n\geq 1$, the category $\nCatGrp$ is equivalent to the category $\Cat^n(\Gp)$ of $n$-fold categories in $\Gp$, and the result follows by induction from the equivalence between internal crossed modules and internal categories.

A concrete example of a protoadditive functor in the context of crossed modules is the \defn{connected components functor} $\pi_0\colon \XMod\to \Gp$. Let us recall~\cite{BrownSpencer,MacLane} that any crossed module $(T,G,\phi)$ can be viewed as follows as an internal category in $\Gp$. Its underlying reflexive graph occurs in the diagram
\[
	\xymatrix{0 \ar[r] & T \ar@{{ |>}->}[r]^-k & T\rtimes G \ar@{-{ >>}}@<1ex>[r]^-{d}\ar@<-1ex>[r]_-{c} & G \ar[r] \ar@{{ >}->}[l]|-{e} & 0}
\]
where $k=\ker(d)$, $ce=1_G$ and $ck=\phi$, and the semidirect product is taken with respect to the given action of $G$ on $T$. Hence, $\pi_0\left((T,G,\phi) \right)$ may be defined as the coequalizer of $d$ and $c$, or equivalently~\cite[Proposition 3.9]{EverVdL2} as the cokernel of $ck=\phi$. The functor $\pi_0$ is left adjoint to the \defn{discrete crossed module functor} ${D\colon \Gp \to \XMod}$, which associates to a group $G$ the (discrete) crossed module ${0\to G}$. This implies, in particular, that $\pi_0$ preserves all coproducts. Moreover, $\Gp$ can be viewed as the subvariety of $\XMod$ consisting of the discrete crossed modules. From this last observation, it follows that $\pi_0$ preserves proper morphisms: actually, it is sequentially right exact (see Example~\ref{ex - sequentially right exact}). Finally, as shown in~\cite{EG-honfg}, the functor is protoadditive.

By Theorem~\ref{Thm Def L}, the non-additive derived functors of the protoadditive functor $\pi_0\colon \XMod \to \Gp$ are well defined. By Theorem~\ref{Thm Compa}, these are equivalent to the simplicially derived functors (in the sense of~\cite{Barr-Beck}) of the functor $\pi_0$, which are known to be expressible in terms of Hopf formulae~\cite{EGVdL,EG-honfg}. This provides a new perspective on these Hopf formulae, and more generally on the Hopf formulae of \cite[Theorem~3.3]{EG-honfg}, which may now be obtained as the chain derived functors of the natural composite of connected components functors
\[
	\nCatGrp\to \CatGrp{(n-1)}\to\dots\to \CatGrp{1}\simeq\XMod \to \Gp.
\]
Something very similar happens in~\cite{Donadze-VdL}. Further details, including detailed proofs and a general result for internal crossed modules in any semi-abelian category that satisfies Condition \P, are given in the article~\cite{MC-TVdL-2}.

\section{Further questions}\label{sec:questions}

The condition that the class of projective objects is closed under protosplit subobjects is quite new and not yet very well understood. We have seen that it holds for Lie algebras over a commutative unitary ring, for of crossed modules, and more generally for \ncat-groups, besides all Schreier varieties of algebras and all abelian categories. It is not clear how common this condition is, and what its implications are. We list here some further problems that we believe are worth investigating.

\begin{enumerate}
	\item Characterize those varieties of algebras that satisfy Condition \P.
	\item For a protoadditive functor whose domain satisfies Condition \P, simplicially derived functors are equivalent to our chain derived functors.

	      Protoadditive functors between semi-abelian categories are relatively rare compared to additive functors between abelian categories. (For instance, there are no non-trivial protoadditive reflections of the category of groups~\cite{CGJ}.) Find new examples and characterizations to better understand them.
	\item In general, explore the relationship between the chain derived functors of a functor between semi-abelian varieties of algebras and the simplicially derived functors of the same functor.
	\item Explore approximate chain homotopy from a homotopy-theoretical perspective. In forthcoming work, we already checked that it can be characterized by means of a \emph{cylinder object}, but we did not yet examine the homotopy theory of the category of chain complexes with respect to this notion (i.e., quotient out the homotopy relation) from a general, abstract perspective.

	      Natural questions in this context include, whether there exists a Quillen model structure~\cite{Quillen} (or perhaps a weaker, one-sided version of this notion such as the one considered in~\cite{K.S.Brown}) on the category of chain complexes with respect to approximate chain homotopy, and whether the derived functors can be computed by means of this model structure.
	\item We did not investigate how to deal with \emph{right} derived functors or with duality in general. Neither did we study those functors which typically lead to cohomology groups, as in~\cite{RVdL2,PVdL1} for instance. Here the question arises, for which kind of abelian object $A$ of a semi-abelian category $\C$, the functor $\Hom(-,A)\colon \C^{\op}\to \Ab$ satisfies the conditions considered in this article.
\end{enumerate}

\section*{Acknowledgments}
Many thanks to Jacques Darn\'e for his insightful remarks and suggestions concerning the Lazard decomposition theorem, and to the referee for careful comments and suggestions leading to the present, improved version of the text.

%\bibliography{tim}
%%\bibliographystyle{amsplain}
%\bibliographystyle{amsplain-nodash}

\end{document}